\newtheorem{theorem}{Theorem}[section]
\newtheorem{lemma}[theorem]{Lemma}
\newtheorem{corollary}[theorem]{Corollary}
\newtheorem{proposition}[theorem]{Proposition}
\newtheorem{conjecture}[theorem]{Conjecture}
\theoremstyle{definition}
\theoremstyle{remark}
\newtheorem{remark}[theorem]{Remark}
\numberwithin{equation}{section}
\newcommand{\namedthm}[2]{\theoremstyle{plain}
   \newtheorem*{thm#1}{#1}\begin{thm#1}#2\end{thm#1}}
\def\bcw{\mathbin{\bigcirc\mkern-15mu\wedge}}
\begin{document}

%\title{Some Conformal Gap phenomena on $\mathbb{S}^4$ and $\mathbb{CP}^2$}
\title[Conformally invariant gap theorem]{A conformally invariant gap theorem characterizing $\mathbb{CP}^2$ via the Ricci flow}

%    Information for first author
\author{Sun-Yung A. Chang}
%    Address of record for the research reported here
\address{Department of Mathematics, Princeton University, Princeton, NJ 08544, USA}
%    Current address
%\curraddr{Department of Mathematics and Statistics,
%Case Western Reserve University, Cleveland, Ohio 43403}
\email{chang@math.princeton.edu}
%    \thanks will become a 1st page footnote.
\thanks{The first author was supported in part by NSF Grant MPS-1509505.}

%    Information for second author
\author{Matthew Gursky}
\address{Department of Mathematics, University of Notre Dame, Notre Dame, IN 46556, USA}
\email{Matthew.J.Gursky.1@nd.edu}
\thanks{The second author was supported in part by NSF Grant DMS-1811034.}

\author{Siyi Zhang}
\address{Department of Mathematics, Princeton University, Princeton, NJ 08544, USA}
\email{siyiz@math.princeton.edu}
%    General info
%\subjclass[2000]{Primary 54C40, 14E20; Secondary 46E25, 20C20}

\date{\today}

%\dedicatory{This paper is dedicated to our advisors.}

%\keywords{Differential geometry, algebraic geometry}

\begin{abstract} We extend the sphere theorem of \cite{CGY03} to give a conformally invariant characterization of $(\mathbb{CP}^2, g_{FS})$.  In particular, we introduce a conformal invariant $\beta(M^4,[g]) \geq 0$ defined on conformal four-manifolds satisfying a `positivity' condition; it follows from \cite{CGY03} that if $0 \leq \beta(M^4,[g]) < 4$, then $M^4$ is diffeomorphic to $S^4$.  Our main result of this paper is a `gap' result showing that if $b_2^{+}(M^4) > 0$ and $4 \leq \beta(M^4,[g]) < 4(1 + \epsilon)$ for $\epsilon > 0$ small enough, then $M^4$ is diffeomorphic to $\mathbb{CP}^2$.  The Ricci flow is used in a crucial way to pass from the bounds on $\beta$ to pointwise curvature information.
\end{abstract}

\maketitle

%\section*{This is an unnumbered first-level section head}
%This is an example of an unnumbered first-level heading.

%% The correct journal style for \specialsection is all uppercase; a known bug
%% in amsart.cls prevents this, so input must be uppercase until it is fixed.
%\specialsection*{This is a Special Section Head}
%\specialsection*{THIS IS A SPECIAL SECTION HEAD}
%This is an example of a special section head%
%%%%%%%%%%%%%%%%%%%%%%%%%%%%%%%%%%%%%%%%%%%%%%%%%%%%%%%%%%%%%%%%%%%%%%%%
%\footnote{Here is an example of a footnote. Notice that this footnote
%text is running on so that it can stand as an example of how a footnote
%with separate paragraphs should be written.
%\par
%And here is the beginning of the second paragraph.}%
%%%%%%%%%%%%%%%%%%%%%%%%%%%%%%%%%%%%%%%%%%%%%%%%%%%%%%%%%%%%%%%%%%%%%%%%
%.

\section{Introduction}

In \cite{CGY03}, the first two authors with P. Yang proved a conformally invariant sphere theorem in dimension four.  In this paper we extend the results of \cite{CGY03} to give a characterization of complex projective space. % We also prove some new rigidity results for Bach-flat metrics.
To state our results we begin by establishing our notation and conventions.

If $(M^4,g)$ is a smooth, closed Riemannian four-manifold, we denote the Riemannian curvature tensor by $Rm$ (or $Rm_g$ if we need to specify the metric), the Ricci tensor by $Ric$, and the scalar curvature by $R$.  We also denote the Weyl curvature tensor by $W$, and the Schouten tensor
\begin{align} \label{Pdef}
P = \frac{1}{2} \big( Ric - \frac{1}{6}R \cdot g \big).
\end{align}
We remark that the definition of the Schouten tensor in \cite{CGY03} (denoted by $A$) differed from the formula in (\ref{Pdef}) by a factor of two; however, in this paper we adopt the more common convention.  In terms of the Weyl and Schouten tensors the Riemannian curvature tensor can be decomposed as
\begin{align} \label{rot}
Rm = W + P \bcw g
\end{align}
where $\bcw$ is the Kulkarni-Nomizu product.  There are two important consequences of this identity:  First, since the Weyl tensor is conformally invariant, it follows that the behavior of the curvature tensor under a conformal change of metric is determined by the transformation of the Schouten tensor.  The second consequence is that the splitting induces a splitting of the Euler form, so that the Chern-Gauss-Bonnet formula can be expressed as
\begin{align} \label{CGB}
8 \pi^2 \chi(M) = \int \|W \|^2 \,dv + 4 \int \sigma_2(g^{-1} P) \,dv,
\end{align}
where \\

$\bullet$ $\| \cdot \|$ denotes the norm of the Weyl tensor, viewed as an endormorphism of $\Omega^2(M)$, the bundle of two-forms.  Note that this differs from the norm of Weyl when viewed as a four-tensor, and the two norms are related by
\begin{align*}
\| W \| = \frac{1}{4} | W|^2.
\end{align*}

$\bullet$  $g^{-1} P$ denotes the $(1,1)$-tensor (interpreted as an endomorphism of the tangent space at each point) obtained by `raising an index' of the Schouten tensor, and $\sigma_2(g^{-1}P)$ is the second elementary symmetric polynomial applied to its eigenvalues. To simplify notation we will henceforth write $\sigma_2(P)$ in place of $\sigma_2(g^{-1}P)$. \\

It follows from the conformal invariance of the Weyl tensor that both integrals in (\ref{CGB}) are conformally invariant.  While their sum is a topological invariant, their ratio can be arbitrary.  As we now explain, when the scalar curvature is positive the ratio does carry geometric and topological information.   %To make this explicit, we need to make some further definitions.

Given a Riemannian manifold $(M^n, g)$ of dimension $n \geq 3$, let $[g]$ denote the equivalence class of metrics pointwise conformal to $g$, and $Y(M^n,[g])$ denote the Yamabe invariant:
\begin{align*}
Y(M^n,[g]) = \inf_{ \tilde{g} \in [g] } Vol(\tilde{g})^{-\tfrac{n-2}{n}} \int R_{\tilde{g}}\,dv_{\tilde{g}}.
\end{align*}
We can also express the Yamabe invariant in terms of the first symmetric function of the Schouten tensor: it follows from (\ref{Pdef}) that
\begin{align*}
\sigma_1(P) = \frac{R}{2(n-1)},
\end{align*}
hence
\begin{align*}
Y(M^n,[g]) = \inf_{ \tilde{g} \in [g] } 2 (n-1) Vol(\tilde{g})^{-\tfrac{n-2}{n}} \int \sigma_1(P_{\tilde{g}}) \,dv_{\tilde{g}}.
\end{align*}
With this interpretation of the Yamabe invariant, in dimension four we should view the conformal invariant $\int \sigma_2(P) \,dv$ as a kind of ``second Yamabe invariant'' (see \cite{GLW04}, \cite{Sheng08}, \cite{GLW10}).  We therefore define
\begin{align} \label{Y1}
\mathcal{Y}_1^{+}(M^4) = \{ g\ :\ Y(M^4,[g]) > 0 \},
\end{align}
and
\begin{align} \label{Y2def}
\mathcal{Y}_2^{+}(M^4) = \{ g \in \mathcal{Y}_1^{+}(M)\ :\ \int \sigma_2(P_g)\,dv_g > 0 \}.
\end{align}

By a classical result of Lichnerowicz, there are topological obstructions to $\mathcal{Y}_1(M^4)$ being non-empty \cite{Lich}.  There are also topological implications of $\mathcal{Y}_2(M^4)$ being non-empty:  by \cite{Gur98}, if $\mathcal{Y}_2^{+}(M^4) \neq \emptyset$ then the first Betti number $b_1(M^4) = 0$.  In fact, it follows from \cite{CGY02} that $[g]$ contains a metric $\tilde{g}$ with positive Ricci curvature.

Returning to the Chern-Gauss-Bonnet formula, for metrics $g \in \mathcal{Y}_2^{+}(M^4)$ we define the conformal invariant
\begin{align} \label{betadef}
\beta(M^4,[g]) = \dfrac{ \int \| W_g \|^2 \,dV_g }{ \int \sigma_2( P_g) \, dv_g } \geq 0.
\end{align}
We also define smooth invariant
\begin{align} \label{betaM}
\beta(M^4) = \inf_{[g]} \beta(M^4,[g]).
\end{align}
If $\mathcal{Y}_2^{+}(M^4) = \emptyset$, we set $\beta(M^4) = -\infty$.

The main results of \cite{CGY03} give a (sharp) range for $\beta$ that imply the underlying manifold is the sphere:

\namedthm{Theorem 1} {Suppose $M^4$ is oriented.  If $g \in \mathcal{Y}_2^{+}(M^4)$ with
\begin{align} \label{B1}
\beta(M^4,[g]) < 4,
\end{align}
then $M^4$ is diffeomorphic to $S^4$.  In particular, if $M^4$ satisfies
\begin{align*}
-\infty < \beta(M^4) < 4,
\end{align*}
then the same conclusion holds.

Furthermore, if $M^4$ admits a metric with $\beta(M^4,[g]) = 4$, then one of the following must hold:

\begin{itemize}

\item $M^4$ is diffeomorphic to $S^4$; or \\

\item $M^4$ is diffeomorphic to $\mathbb{CP}^2$ and $g \in [g_{FS}]$, where $g_{FS}$ denotes the Fubini-Study metric.

\end{itemize}

}

As a corollary we have the following characterization of manifolds for which $\beta(M^4) = 0$:

\namedthm{Corollary 1} {Assume $M^4$ is oriented.  Then $\beta(M^4) = 0$ if and only if $M^4$ is diffeomorphic to $S^4$.  Furthermore, $\beta(M^4,[g]) = 0$ if and only if $g \in [g_0]$, where $g_0$ denotes the round metric.   }

\vskip.2in

\noindent {\bf Remarks.}  \begin{enumerate}

\item For the case of equality, we note that if $\beta(M^4,[g]) = 0$, then the Weyl tensor $W_{g} \equiv 0$ and it follows that $(M^4,g)$ is locally conformally flat.  By our observations above, since $[g]$ admits a metric with positive Ricci curvature, by Kuiper's theorem \cite{Kuiper} $(M^4,g)$ is conformally equivalent to $(S^4,g_0)$ or $(\mathbb{RP}^4, g_0)$, where $g_0$ is the standard metric.  \\

\item  There are a number of other sphere-type theorems under integral curvature conditions; see for example \cite{CD10}, \cite{CNd10}, \cite{GLW10}, \cite{CZ14}, \cite{BC15}, \cite{L16},
\cite{BC17}, and the references in \cite{CGY03}.  \\

\end{enumerate}

Our first goal in this paper is initiating the study of four-manifolds with
\[ \beta(M^4,[g]) \geq 4. \]
 Suppose $M^4$ is oriented, and let $b_2(M^4)$ denote the second Betti number.  Then we can write $b_2 = b_2^{+} + b_2^{-}$, where $b_2^{\pm}$ denotes the dimension of the space of self-dual/anti-self-dual harmonic two-forms.  If $b_2(M^4) \neq 0$, then by changing the orientation if necessary we may assume $b_2^{+} > 0$.

\namedthm{Theorem A} {Suppose $M^4$ is oriented and $b_2^{+}(M^4) > 0$.  There is an $\epsilon > 0$ such that if $M^4$ admits a metric $g \in \mathcal{Y}_2^{+}(M^4)$ with
\begin{align} \label{Bep}
4 \leq \beta(M^4,[g]) < 4(1 + \epsilon),
\end{align}
then $M^4$ is diffeomorphic to $\mathbb{CP}^2$.

}

This `gap' theorem immediately gives a characterization of manifolds with $b_2(M^4) \neq 0$ and $\beta(M^4) = 4$: \\

\namedthm{Theorem B} {Suppose $M^4$ is oriented and $b_2^{+}(M^4) > 0$.  If
\begin{align} \label{B2}
\beta(M^4) = 4,
\end{align}
then $M^4$ is diffeomorphic to $\mathbb{CP}^2$.  Moreover, $\beta(M^4,[g]) = 4$ if and only if $g \in [g_{FS}]$.

}

The proof of Theorem A is similar in approach to the proof of Theorem 1 in \cite{CGY03}.  The first step is to find a conformal representative satisfying a pointwise curvature condition that encodes the integral assumptions of the theorem.  In \cite{CGY03} this involved solving a modified version of the $\sigma_2$-Yamabe problem.  However, in the proof of Theorem B it is more natural to consider a modified version of the Yamabe problem introduced by the second author in \cite{Gur00}.  In particular we show that a metric satisfying the assumptions of Theorem B can be conformally deformed to a metric that is ``almost self-dual Einstein'' in an $L^2$-sense, and whose scalar curvature satisfies a condition similar to the condition satisfied by the scalar curvature of a K\"ahler metric.

As in the proof of Theorem 1, the second step involves the Ricci flow.  In \cite{CGY03} the weak pinching result of Margerin \cite{Mar98} played a crucial role.
%To prove Theorem B, we show that the Ricci flow with the conformal representative constructed in the first step as the initial metric, will exist for all time and converge to the Fubini-Study metric.  We believe the proof of this result is of independent interest.
To prove Theorem B, we show that the Ricci flow with the conformal representative constructed in the first step as the initial metric, will have uniform bounds on the curvature and the Sobolev constant on a fixed time interval $[0, T_0]$, with $T_0 >0$ depending on the pinching constant.
These estimates together with the convergence theory of Cheeger-Gromov-Taylor \cite{CGT} imply the family of Ricci flows $g_{j}(T_0)$ (up to a subsequence) will converge to the Fubini-Study metric.

In view of Theorems A and B, we make several conjectures.  The first is that Theorem B remains valid if we drop the assumption on $b_2^{+}(M^4)$:

\begin{conjecture} \label{Con1} If
\begin{align} \label{B2}
\beta(M^4) = 4,
\end{align}
then $M^4$ is diffeomorphic to $\pm \mathbb{CP}^2$.  Moreover, $\beta(M^4,[g]) = 4$ if and only if $g \in [g_{FS}]$.
\end{conjecture}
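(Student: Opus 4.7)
Our plan is to split into cases based on $b_2(M^4)$.

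When $b_2(M^4) > 0$, at least one of $b_2^{+}(M^4)$ or $b_2^{-}(M^4)$ is positive. Since both $\beta(M^4,[g])$ and the condition $g \in \mathcal{Y}_2^{+}(M^4)$ are orientation-independent, we may reverse the orientation of $M^4$ if necessary to arrange $b_2^{+} > 0$, and both assertions of the conjecture then follow immediately from Theorem B: the smooth type is $\mathbb{CP}^2$ in the chosen orientation---hence $\pm \mathbb{CP}^2$ as an unoriented manifold---and any $g$ with $\beta(M^4,[g]) = 4$ satisfies $g \in [g_{FS}]$.

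The substantive case is $b_2(M^4) = 0$, in which we would need to rule out the existence of any $g \in \mathcal{Y}_2^{+}(M^4)$ with $\beta(M^4,[g]) = 4$. Because $\mathcal{Y}_2^{+}(M^4) \neq \emptyset$ forces $b_1(M^4) = 0$, such a manifold has $\chi(M^4) = 2$ and signature $\tau(M^4) = 0$. Plugging $\beta = 4$ into the Chern-Gauss-Bonnet formula (\ref{CGB}) yields $\int \|W\|^2\, dv = 8\pi^2$, and the signature formula combined with $\tau = 0$ gives $\int \|W^{+}\|^2\, dv = \int \|W^{-}\|^2\, dv = 4\pi^2$.

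We propose to adapt the two-step strategy of Theorem B. First, conformally deform $g$ to an ``almost self-dual Einstein in $L^2$'' representative $\tilde g$ via the modified Yamabe construction from \cite{Gur00}; this step does not use $b_2^{+} > 0$. Second, take $\tilde g$ as initial data for the Ricci flow, establish uniform curvature and Sobolev bounds on a fixed interval $[0, T_0]$, and extract a subsequential Cheeger-Gromov-Taylor limit $(M^4, g_{\infty})$ at $t = T_0$; the underlying diffeomorphism type is preserved since no singularity forms on $[0, T_0]$. As both $\int \|W\|^2$ and $\int \sigma_2(P)$ are conformally invariant and continuous in the $C^2$ Cheeger-Gromov topology, $\beta(M^4, [g_{\infty}]) = 4$.

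The main obstacle is that $b_2^{+} > 0$ likely enters the identification of the Ricci-flow limit in an essential way in the proof of Theorem B---for instance through the use of a self-dual harmonic two-form to pin down a K\"ahler structure on $g_{\infty}$. Our plan is instead to upgrade the $L^2$ gap $\int \|W^{-}\|^2 = 4\pi^2$ to the pointwise conclusion $W^{-}_{g_{\infty}} \equiv 0$, using the Ricci-flow smoothing together with a parabolic maximum-principle argument applied to the evolution of $|W^{-}|^2$. Once $g_{\infty}$ is self-dual Einstein with positive scalar curvature, Hitchin's classification forces it to be either $(S^4, g_0)$ or $(\mathbb{CP}^2, g_{FS})$ up to rescaling; the former is ruled out since $\beta(M^4, [g_{\infty}]) = 4 \neq 0 = \beta(S^4, [g_0])$, forcing $M^4 \cong \mathbb{CP}^2$, which contradicts $b_2(M^4) = 0$. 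The essential new analytic input required by the conjecture is therefore to pass from the integral gap identity to pointwise self-duality of the limit without invoking harmonic two-forms.
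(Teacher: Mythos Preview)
This statement is presented in the paper as a \emph{conjecture}, not a theorem; only the case $b_2^{+}(M^4)>0$ is proved (Theorem~B), and the general statement is explicitly left open. Your reduction of the $b_2>0$ case to Theorem~B via orientation reversal is correct and is exactly what isolates the open content.

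For $b_2=0$ your proposal has genuine gaps. First, you frame the task as ruling out a metric with $\beta(M^4,[g])=4$, but $\beta(M^4)=4$ is an infimum and need not be attained; in fact here it \emph{cannot} be, since the equality case of Theorem~1 would then force $M^4\cong S^4$ (as $b_2=0$ excludes $\mathbb{CP}^2$), giving $\beta(M^4)=0$. The actual task is to rule out a sequence with $\beta(M^4,[g_j])\searrow 4$, which your argument does not address. Second, your claim that the modified Yamabe construction ``does not use $b_2^{+}>0$'' is false: the existence statement (Theorem~3.1(iii)) explicitly assumes $b_2^{+}>0$ to obtain $\widehat Y\le 0$ and inequality~(\ref{Gap W+}), and the $L^2$ estimates of Lemma~\ref{initial metric} rest on inequality~(\ref{Gursky theorem}) together with the values $\chi=3$, $\tau=1$. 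Third, and decisively, the Ricci-flow smoothing in Propositions~\ref{evolution of G2} and~\ref{evolution of G3} is driven by the \emph{smallness} of $\int\|W^{-}\|^2=O(\epsilon)$, which is what makes $G_2$ small. When $b_2=0$ your own computation gives $\int\|W^{-}\|^2=4\pi^2$, bounded away from zero, so there is no ``almost self-dual'' initial representative, and no parabolic maximum principle will convert a fixed positive $L^2$ mass into pointwise vanishing of $W^{-}$. This missing mechanism is precisely why the statement remains conjectural.
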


It is clear that $4$ is a `special' or `critical' value of $\beta$, at which the topology of the underlying manifold can change.  A natural question is the next critical value.  As a corollary of \cite{Gur98}, we have the following estimate for manifolds with indefinite intersection form, i.e., $b_2^{+}(M^4), b_2^{-}(M^4) > 0$.  \\

\namedthm{Theorem C} {Suppose $M^4$ is oriented, $b_2(M^4) \neq 0$, and the intersection form of $M^4$ is indefinite.  If $\mathcal{Y}_2^{+}(M^4)$ is non-empty, then
\begin{align} \label{B3}
\beta(M^4) \geq 8.
\end{align}
Moreover, if $M^4$ admits a metric with $\beta(M^4,[g]) = 8$, then $g \in [g_p]$, where $g_p = g_{S^2} \oplus g_{S^2}$ is the product metric on $S^2 \times S^2$.}

Our next conjecture is that we can weaken the condition $\beta(M^4) = 4$, and characterize the possible topological types of manifolds admitting metrics with $\beta$ between $4$ and $8$:

\begin{conjecture} \label{Con2}  If $M^4$ is oriented and admits a metric $g \in \mathcal{Y}_2^{+}(M^4)$ with
\begin{align} \label{less8}
0 \leq \beta(M^4,[g]) < 8,
\end{align}
then $M^4$ is diffeomorphic to $S^4$ or $\pm \mathbb{CP}^2$.
\end{conjecture}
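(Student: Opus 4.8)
The plan is to stratify the oriented four-manifolds in question by the type of their intersection form and to attack each stratum with a refinement of the Ricci-flow argument behind Theorems A and B. The one step that is unconditional is disposing of the indefinite case: if $\mathcal{Y}_2^{+}(M^4)\neq\emptyset$ and the intersection form is indefinite, Theorem C gives $\beta(M^4)\geq 8$, so the hypothesis $\beta(M^4,[g])<8$ forces the intersection form to vanish or (after possibly reversing orientation) to be positive definite. Since $\mathcal{Y}_2^{+}(M^4)\neq\emptyset$ implies $b_1(M^4)=0$ and, by \cite{CGY02}, that $[g]$ carries a metric of positive Ricci curvature -- hence $\pi_1(M^4)$ is finite -- one may pass to the universal cover, which again lies in $\mathcal{Y}_2^{+}$ with the same value of $\beta$ (both conformal integrals scale by $\abs{\pi_1}$), and work in the simply connected case; there Donaldson's diagonalization theorem identifies $M^4$ up to homeomorphism with $S^4$ (when $b_2=0$) or with $\#_{k}\mathbb{CP}^2$, $k=b_2\geq 1$. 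As $S^4$ and $\mathbb{CP}^2$ admit no nontrivial free actions of finite groups (Lefschetz fixed point theorem), it suffices to settle the diffeomorphism type in the simply connected case.

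It then remains to prove the diffeomorphism statement, for which I would follow the two-step scheme of Theorems A and B. The range $\beta(M^4,[g])<4$ is Theorem 1 and $4\leq\beta(M^4,[g])<4(1+\epsilon)$ with $b_2^{+}>0$ is Theorem A, so the new content is the range $[4(1+\epsilon),\,8)$ together with the homology-four-sphere case for all $\beta\in(0,8)$. For such an $M^4$ one first solves the appropriate (modified) Yamabe-type problem from \cite{Gur00} to replace $g$ by a conformal representative whose pointwise curvature is controlled by the bound $\beta<8$: $L^2$-smallness of the relevant chirality of the Weyl tensor together with a sign condition on the scalar curvature. The Chern--Gauss--Bonnet identity (\ref{CGB}) gives $\int\sigma_2(P_g)\,dv_g=8\pi^2\chi(M^4)/(\beta(M^4,[g])+4)>0$, which with $b_1=b_3=0$ and $b_2=k$ determines $\chi$ and fixes how $\int\|W\|^2$ splits into self-dual and anti-self-dual parts; combining that splitting with positivity/rigidity estimates for $W^{+}$ on manifolds with $b_2^{+}>0$ (of Seiberg--Witten/LeBrun type) is where I would look for the mechanism that simultaneously excludes $k\geq 2$ and forces the representative to be close to $g_{FS}$. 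Running the Ricci flow from this metric as in Theorem B, with uniform curvature and Sobolev bounds on a fixed time interval, one would then extract a subsequential limit equal to the round metric or to $g_{FS}$, and conclude.

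The main obstacle is that $[4,8)$ is not a perturbative regime. The threshold $4$ in Theorem 1 is exactly where the pointwise condition extracted from $\beta$ meets Margerin's weak-pinching hypothesis \cite{Mar98}; for $\beta$ above $4(1+\epsilon)$ the resulting pinching is genuinely weak, the Ricci flow need not converge, and one must expect neck-pinch or orbifold singularities -- precisely the degeneration one sees for $\#_k\mathbb{CP}^2$ with $k\geq 2$. A proof along these lines must therefore show that a metric with $\beta<8$ cannot sustain such a singularity while carrying nontrivial $b_2^{+}$, i.e., that the only admissible singularity models are compatible with a single $\mathbb{CP}^2$ factor (or none), and then control the flow for all time rather than on a fixed interval -- exactly the point at which a genuinely new idea is needed. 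The homology-four-sphere case ($b_2=0$) has its own, ``differentiable Poincar\'e''-type difficulty: one must upgrade a homotopy $S^4$ to the standard $S^4$, and it is not even clear a priori that $\beta$ close to $8$ forces definite pinching rather than, say, collapse to a lower-dimensional limit, so some Perelman-type noncollapsing input adapted to this conformally invariant setting would presumably be required. That $4$ and $8$ are the only values at which the topology is currently known to change, and that $8$ is attained by $S^2\times S^2$, makes $8$ a natural guess for the next threshold, but nothing in the present method excludes an intermediate critical value.
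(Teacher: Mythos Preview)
This statement is labeled a \emph{Conjecture} in the paper and is not proved there; there is no proof of it in the paper to compare your proposal against. Your proposal is likewise not a proof but an outline of a strategy, and you are candid about this: you write that the range $[4(1+\epsilon),8)$ ``is not a perturbative regime,'' that controlling the Ricci flow there is ``exactly the point at which a genuinely new idea is needed,'' and that the $b_2=0$ case carries a ``differentiable Poincar\'e''-type difficulty. These are accurate assessments of why the conjecture remains open.

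One point where you overestimate the difficulty: you suggest that excluding $k\geq 2$ in $\#_k\mathbb{CP}^2$ would require Seiberg--Witten or LeBrun-type estimates for $W^{+}$. In fact the paper already proves (Lemma~\ref{Lemma25}) that $\beta(M^4,[g])<8$ together with $b_2(M^4)>0$ forces $b_2^{+}=1$ and $b_2^{-}=0$, using only the Chern--Gauss--Bonnet and signature formulas together with the bound $\int\sigma_2(P)\,dv\leq 4\pi^2$ from \cite{Gur99}. Combined with Donaldson--Freedman this already yields the \emph{homeomorphism} classification you sketch. Thus the entire open content of the conjecture is the upgrade from homeomorphism to diffeomorphism---for homotopy $\mathbb{CP}^2$'s when $b_2=1$ and for homotopy $S^4$'s when $b_2=0$---and your Ricci-flow plan for that step, as you yourself note, does not go through once $\beta$ leaves the perturbative neighborhood of $4$.
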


\vskip.2in

%%%%%%%%%%%

%%%%%%%%%%%%%%%%%%%%%%%%%%%%%
%%%%%%%%%%%%%%%%%%%%%%%%%%%%%
%%%%%%%%%%%%%%%%%%%%%%%%%%%%%

\section{Preliminaries} \label{Prelim}

In this section, we state and prove some preliminary results, including the proof of Theorem C.  Several of the results in this section are based on the following result in \cite{Gur98}:

\begin{theorem} \label{ThmGur98}
Let $M^4$ be a closed oriented four-manifold with $b_2^{+}(M^4) > 0$. Then for any metric $g$ with $Y(M^4,[g])\geq0$,
\begin{equation}\label{Gursky theorem}
  \int_M||W^+||^2dv\geq\frac{4\pi^2}{3}(2\chi(M^4)+3\tau(M^4))
\end{equation}
where $\chi(M^4)$ and $\tau(M^4)$ denote the Euler characteristic and signature of $M^4$, respectively.

Furthermore:
\begin{enumerate}
  \item Equality is achieved in (\ref{Gursky theorem}) by some metric $g$ with $Y(M^4,[g])>0$ if and only if $g$ is conformal to a (positive) K\"ahler-Einstein metric $g_{KE}=e^{2w}g$.
  \item Equality is achieved in (\ref{Gursky theorem}) by some metric $g$ with $Y(M,[g])=0$ if and only if $g$ is conformal to a Ricci-flat anti-self-dual K\"ahler-Einstein metric $g_{KE}=e^{2w}g$.
\end{enumerate}
\end{theorem}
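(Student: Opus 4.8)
The plan is to reduce the inequality to a pair of sharp bounds in terms of the Yamabe invariant, each of which is elementary apart from one analytic input. First, by the Chern--Gauss--Bonnet formula (\ref{CGB}) together with the signature formula $12\pi^{2}\tau(M^{4}) = \int_{M}(\|W^{+}\|^{2} - \|W^{-}\|^{2})\,dv$, one obtains
\begin{align*}
\frac{4\pi^{2}}{3}\big(2\chi(M^{4})+3\tau(M^{4})\big) = \frac{2}{3}\int_{M}\|W^{+}\|^{2}\,dv + \frac{4}{3}\int_{M}\sigma_{2}(P)\,dv,
\end{align*}
so (\ref{Gursky theorem}) is equivalent to $\int_{M}\|W^{+}\|^{2}\,dv \geq 4\int_{M}\sigma_{2}(P)\,dv$, which I would deduce from the two conformally invariant estimates
\begin{align} \label{twobds}
\int_{M}\|W^{+}_{g}\|^{2}\,dv_{g} \geq \frac{1}{24}\,Y(M^{4},[g])^{2}, \qquad \int_{M}\sigma_{2}(P_{g})\,dv_{g} \leq \frac{1}{96}\,Y(M^{4},[g])^{2}.
\end{align}
The second is immediate: writing $E = Ric - \tfrac14 R\,g$ one has $\sigma_{2}(P) = \tfrac{1}{96}R^{2} - \tfrac18|E|^{2}$, so evaluating at a Yamabe representative $\hat g \in [g]$ with $Vol(\hat g)=1$ (hence $R_{\hat g}\equiv Y$) and using the conformal invariance of $\int\sigma_{2}(P)\,dv$ gives $\int_{M}\sigma_{2}(P_{g})\,dv_{g} = \tfrac{1}{96}Y^{2} - \tfrac18\int_{M}|E_{\hat g}|^{2}\,dv_{\hat g} \leq \tfrac{1}{96}Y^{2}$.

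The first estimate in (\ref{twobds}) is the crux. Since $b_{2}^{+}(M^{4})>0$, fix a nontrivial self-dual $g$-harmonic two-form $\omega$; because in dimension four self-duality and harmonicity of a two-form depend only on the conformal class, $\omega$ remains self-dual and harmonic under any conformal change. Off the zero set $Z=\{\omega=0\}$ pass to the conformal metric $g_{\omega}:=|\omega|_{g}\,g$, for which $|\omega|_{g_{\omega}}\equiv 1$. The Bochner--Weitzenb\"ock identity for the self-dual harmonic form $\omega$ reads $\nabla^{*}\nabla\omega - 2W^{+}(\omega) + \tfrac{R}{3}\omega = 0$; pairing with $\omega$ and using $|\omega|_{g_{\omega}}\equiv 1$ (so $\Delta_{g_{\omega}}|\omega|^{2}_{g_{\omega}}=0$) gives, pointwise on $M\setminus Z$,
\begin{align*}
|\nabla^{g_{\omega}}\omega|^{2} + \frac{1}{3}R_{g_{\omega}} = 2W^{+}_{g_{\omega}}(\omega,\omega) \leq 2\lambda_{\max}(W^{+}_{g_{\omega}}) \leq 2\sqrt{\tfrac{2}{3}}\,\|W^{+}_{g_{\omega}}\|,
\end{align*}
the last step being the elementary bound $\lambda_{\max} \leq \sqrt{2/3}\,(\sum_{i}\lambda_{i}^{2})^{1/2}$ for a trace-free symmetric endomorphism of a three-dimensional inner product space. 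Discarding $|\nabla^{g_{\omega}}\omega|^{2}\geq 0$ yields $R_{g_{\omega}}\leq 2\sqrt{6}\,\|W^{+}_{g_{\omega}}\|$, so integrating over $M$ and applying Cauchy--Schwarz,
\begin{align*}
\int_{M}R_{g_{\omega}}\,dv_{g_{\omega}} \leq 2\sqrt{6}\int_{M}\|W^{+}_{g_{\omega}}\|\,dv_{g_{\omega}} \leq 2\sqrt{6}\,\Big(\int_{M}\|W^{+}_{g_{\omega}}\|^{2}\,dv_{g_{\omega}}\Big)^{1/2}Vol(g_{\omega})^{1/2}.
\end{align*}
On the other hand, $Y(M^{4},[g])\geq 0$ and the definition of the Yamabe invariant give $\int_{M}R_{g_{\omega}}\,dv_{g_{\omega}} \geq Y(M^{4},[g])\,Vol(g_{\omega})^{1/2}\geq 0$; dividing, squaring, and using the conformal invariance of $\int\|W^{+}\|^{2}\,dv$ gives exactly the first bound in (\ref{twobds}). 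Combining the two bounds proves the inequality, hence (\ref{Gursky theorem}).

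For the rigidity statements, equality in (\ref{Gursky theorem}) forces equality throughout both chains. Equality in the second bound of (\ref{twobds}) forces $E_{\hat g}\equiv 0$, i.e.\ the Yamabe metric $\hat g$ is Einstein. Equality in the first forces (i) $\nabla^{g_{\omega}}\omega\equiv 0$, so $\omega$ is parallel and of constant length for $g_{\omega}$ and hence defines a compatible K\"ahler structure; (ii) $\|W^{+}_{g_{\omega}}\|$ constant; and (iii) $g_{\omega}$ realizes the Yamabe infimum. Since $(M^{4},\hat g)$ is Einstein with $b_{2}^{+}>0$ and so not conformal to the round sphere, Obata's theorem forces $g_{\omega}$ to be a constant multiple of $\hat g$, whence $g_{\omega}$ is K\"ahler--Einstein --- positive when $Y>0$, and Ricci-flat when $Y=0$, in which case $W^{+}\equiv 0$ (any scalar-flat K\"ahler surface is anti-self-dual). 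The converse directions are a direct computation verifying that such conformal classes saturate (\ref{Gursky theorem}). The one genuinely technical point --- where I expect most of the effort to go --- is justifying the integrations by parts and the convergence of the integrals above despite the degeneration of the conformal factor $|\omega|_{g}$ along $Z$; one handles this using that $Z$ is closed of measure (indeed capacity) zero with $|\omega|_{g}$ vanishing there to finite order, or equivalently by working with the regularized metrics $(|\omega|^{2}_{g}+\varepsilon)^{1/2}g$ and letting $\varepsilon\to 0$.
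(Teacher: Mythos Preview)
The paper does not prove this theorem: it is quoted from \cite{Gur98} and used as a black box throughout Section~\ref{Prelim}. There is no in-paper proof to compare against.

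That said, your argument is essentially correct and is in fact the strategy of the cited reference \cite{Gur98}. Your reduction of (\ref{Gursky theorem}) to $\int\|W^+\|^2 \geq 4\int\sigma_2(P)$, and then to the pair of bounds $\int\|W^+\|^2 \geq \tfrac{1}{24}Y^2$ and $\int\sigma_2(P) \leq \tfrac{1}{96}Y^2$, is clean; the second bound is exactly (\ref{S2Y}) in the paper. For the first, your use of the conformal gauge $g_\omega = |\omega|_g\,g$ together with the Weitzenb\"ock formula to obtain $R_{g_\omega} \leq 2\sqrt{6}\,\|W^+_{g_\omega}\|$ pointwise is the key idea, and you correctly flag the degeneracy of $g_\omega$ on $Z=\{\omega=0\}$ as the main analytic hurdle. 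It is worth noting that the paper's Section~3 (Theorem~3.1, quoted from \cite{Gur00}) packages an alternative, variational route to the same pointwise inequality: one minimizes the modified Yamabe functional $\widehat Y_g[u]$ to produce a \emph{smooth} conformal metric $\tilde g$ with $R_{\tilde g}-2\sqrt{6}\,\|W^+_{\tilde g}\|\equiv \widehat Y\le 0$ and $\int R_{\tilde g}^2 \le 24\int\|W^+_{\tilde g}\|^2$, thereby sidestepping the degenerate conformal factor at the cost of a PDE existence theorem.

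One small gap in your rigidity discussion: when $Y=0$, the chain of inequalities behind the first bound collapses to $0\ge 0$ and yields no information about $g_\omega$, so your deduction that $\omega$ is $g_\omega$-parallel is unsupported in that case. The fix is immediate: once equality forces $W^+\equiv 0$ and the Yamabe metric $\hat g$ is Ricci-flat, apply the Weitzenb\"ock formula directly in the metric $\hat g$ to see that any nonzero self-dual harmonic two-form is $\hat g$-parallel, hence nowhere vanishing, and supplies the K\"ahler structure.
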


The following lemma is the first application of this result:

\begin{lemma} \label{Lemma21}
Let $M^4$ be a closed, oriented four-manifold admitting a metric $g \in \mathcal{Y}_2^{+}(M^4)$ with
\begin{equation}\label{Borderline comparison}
 \beta(M^4,[g]) < 8.
%\int_M||W||^2dv<2\int_M\sigma_2dv.
\end{equation}
If $b_2^+(M^4)>0$, then the signature of $M^4$ satisfies
\[\tau(M^4)>0.\]
\end{lemma}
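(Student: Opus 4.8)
The plan is to play the three universal relations among the conformally invariant Weyl energies and the topology off against one another: the Chern-Gauss-Bonnet identity (\ref{CGB}), the Hirzebruch signature formula, and the estimate of Theorem \ref{ThmGur98}. Writing $W = W^{+} + W^{-}$, each of these is a conformally invariant statement about $\int_M \|W^{\pm}\|^2\,dv$ and $\int_M \sigma_2(P)\,dv$ in terms of $\chi(M^4)$ and $\tau(M^4)$, and eliminating the Weyl integrals between them will isolate $\tau(M^4)$ with a favorable sign.

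First I would convert the hypothesis into a bound on the total Weyl energy. Since $g \in \mathcal{Y}_2^{+}(M^4)$ we have $\int_M \sigma_2(P_g)\,dv_g > 0$, so (\ref{Borderline comparison}) together with (\ref{betadef}) says $\int_M \|W_g\|^2\,dv_g < 8\int_M \sigma_2(P_g)\,dv_g$. Substituting the Chern-Gauss-Bonnet relation in the form $4\int_M \sigma_2(P_g)\,dv_g = 8\pi^2\chi(M^4) - \int_M \|W_g\|^2\,dv_g$ and solving for the Weyl integral gives
\[
\int_M \|W_g\|^2\,dv_g < \tfrac{16\pi^2}{3}\,\chi(M^4).
\]

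Next I would decompose $\|W\|^2 = \|W^{+}\|^2 + \|W^{-}\|^2$ and use the signature formula $12\pi^2\tau(M^4) = \int_M\big(\|W^{+}\|^2 - \|W^{-}\|^2\big)\,dv_g$ to write $\int_M \|W^{+}\|^2\,dv_g = \tfrac12\int_M\|W_g\|^2\,dv_g + 6\pi^2\tau(M^4)$; combined with the previous display this yields $\int_M \|W^{+}\|^2\,dv_g < \tfrac{8\pi^2}{3}\chi(M^4) + 6\pi^2\tau(M^4)$. On the other hand, since $b_2^{+}(M^4) > 0$ and $Y(M^4,[g]) > 0 \geq 0$, Theorem \ref{ThmGur98} applies and gives $\int_M \|W^{+}\|^2\,dv_g \geq \tfrac{4\pi^2}{3}\big(2\chi(M^4) + 3\tau(M^4)\big) = \tfrac{8\pi^2}{3}\chi(M^4) + 4\pi^2\tau(M^4)$. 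Comparing this with the upper bound just obtained, the $\chi$-terms cancel and one is left with $4\pi^2\tau(M^4) < 6\pi^2\tau(M^4)$, that is $\tau(M^4) > 0$.

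Since every step is a short manipulation, the only routine care needed is bookkeeping of normalization constants — in particular writing the Gauss-Bonnet and signature integrands with the same operator norm $\|\cdot\|$ used in (\ref{CGB}), so that, for instance, equality in (\ref{Gursky theorem}) is consistent with the Fubini-Study values on $\mathbb{CP}^2$. The one genuinely substantive point is that when invoking Theorem \ref{ThmGur98} one must \emph{not} simply discard $\int_M \|W^{-}\|^2\,dv_g \geq 0$: using the exact signature identity rather than the crude bound $\int_M\|W^{+}\|^2 \leq \int_M \|W\|^2$ is precisely what makes the $\chi$ contributions cancel and produces a lower bound on $\tau(M^4)$ instead of an uninformative upper bound.
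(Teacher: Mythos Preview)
Your proof is correct and follows essentially the same route as the paper's: combine the Chern--Gauss--Bonnet formula with $\beta < 8$ to bound $\int\|W\|^2$ in terms of $\chi$, use the signature formula to pass to $\int\|W^{+}\|^2$, and then compare with the lower bound from Theorem~\ref{ThmGur98} so that the $\chi$-terms cancel and $\tau(M^4)>0$ falls out. The only difference is cosmetic --- the paper writes the resulting upper bound as $\int\|W^{+}\|^2 < \tfrac{4\pi^2}{3}(2\chi+3\tau)+2\pi^2\tau$, which is algebraically identical to your $\tfrac{8\pi^2}{3}\chi + 6\pi^2\tau$.
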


\begin{proof}
The signature formula implies
\begin{equation}\label{W and signature}
  \int ||W_g||^2 \,dv_g =\int_M||W^{+}_g||^2\,dv_g + \int ||W^{-}_g ||^2 \,dv_g = 2\int ||W_g^{+}||^2 \,dv_g -12\pi^2\tau(M^4).
\end{equation}
By (\ref{Borderline comparison}),
\begin{align*}
\int \sigma_2(P_g) \,dv_g > \frac{1}{8} \int \| W_g \|^2 \,dv_g.
\end{align*}
Substituting this into the Chern-Gauss-Bonnet formula, we have
\begin{equation}\label{W and Euler}
  8\pi^2\chi(M^4)= \int ||W_g ||^2 \,dv_g + 4 \int \sigma_2(P_g) \,dv_g > \frac{3}{2} \int ||W_g||^2 \,dv_g.
\end{equation}
Combining (\ref{W and signature}) and (\ref{W and Euler}), we get
\begin{equation}\label{simplification}
8\pi^2 \chi(M^4)>3\int  ||W^{+}_g||^2 \,dv_g - 18\pi^2\tau(M^4),
\end{equation}
and this inequality can be rewritten as
\begin{equation}\label{rewritten}
  \int ||W_g^{+}||^2dv_g < \frac{4}{3}\pi^2(2\chi(M^4)+3\tau(M^4))+2\pi^2\tau(M^4).
\end{equation}
Since $b_2^+(M^4)>0$ and $Y(M^4,[g])>0$, (\ref{Gursky theorem}) in Theorem 2.1 implies
\begin{equation}\label{Gursky estimate}
  \int ||W_g^{+}||^2 \,dv_g \geq\frac{4}{3}\pi^2(2\chi(M^4)+3\tau(M^4)).
\end{equation}
Therefore, combining (\ref{rewritten}) and (\ref{Gursky estimate}), we conclude
\[\tau(M^4)>0.\]
\end{proof}

\begin{remark}
This lemma is sharp in the following sense:
Suppose $(M,g)$ is isometric to $(S^2\times{S^2},g_{prod})$. In this case, $b_2^+(M^4)=b_2^-(M^4)=1$, $\tau(M^4)=0$ and
\begin{equation}\label{sigma_2 on S2S2}
\int ||W_{g} ||^2 \,dv_g= 8 \int \sigma_2(P_g) \,dv_g = \frac{64}{3}\pi^2.
\end{equation}
\end{remark}

%The following corollary can be proved easily via Lemma 2.2.
\begin{corollary} \label{Cor21}
Let $M^4$ be a closed, oriented four-manifold admitting a metric $g \in \mathcal{Y}_2^{+}(M^4)$ with
\begin{equation} \label{corollary 2.4}
\beta(M^4,[g]) < 8.
\end{equation}
Then either $b_2(M^4)=0$, or the intersection form is definite.
\end{corollary}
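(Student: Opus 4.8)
The plan is to combine Lemma 2.3 with its orientation-reversed counterpart. We are assuming $M^4$ is oriented with $b_2(M^4) \neq 0$; after reversing orientation if needed we may assume $b_2^+(M^4) > 0$. Suppose, for contradiction, that the intersection form is indefinite, so that $b_2^+(M^4) > 0$ \emph{and} $b_2^-(M^4) > 0$. The key observation is that the hypothesis $g \in \mathcal{Y}_2^+(M^4)$ and the bound $\beta(M^4,[g]) < 8$ are both orientation-independent: the Yamabe invariant, $\int \sigma_2(P_g)\,dv_g$, and $\int \|W_g\|^2\,dv_g$ do not depend on a choice of orientation. Therefore the hypotheses of Lemma 2.3 hold both for $(M^4,g)$ with its given orientation and for $(M^4,g)$ with the reversed orientation.

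Applying Lemma 2.3 to the given orientation yields $\tau(M^4) > 0$. Applying Lemma 2.3 to the reversed orientation: here $b_2^+$ and $b_2^-$ switch roles, so the hypothesis $b_2^+ > 0$ is still satisfied (it is now the old $b_2^-$), and the conclusion is that the signature of the reversed-orientation manifold is positive. But reversing orientation negates the signature, so this says $-\tau(M^4) > 0$, i.e. $\tau(M^4) < 0$. This contradicts $\tau(M^4) > 0$. Hence the intersection form cannot be indefinite, and since we assumed $b_2(M^4) \neq 0$, the form must be definite. This proves the dichotomy: either $b_2(M^4) = 0$, or the intersection form is definite.

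I do not expect any real obstacle here; the only point requiring a moment's care is the bookkeeping of how $b_2^\pm$ and $\tau$ transform under orientation reversal, and the observation that none of the quantities entering the definition of $\mathcal{Y}_2^+$ or $\beta$ see the orientation. One should also note that Lemma 2.3 as stated requires $b_2^+ > 0$ of the relevant oriented manifold, which is exactly what "indefinite intersection form" guarantees for \emph{both} orientations — this is why the contradiction argument closes cleanly.
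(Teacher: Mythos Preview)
Your proof is correct and follows essentially the same approach as the paper: assume the intersection form is indefinite, apply Lemma 2.3 in both orientations to obtain $\tau(M^4) > 0$ and $-\tau(M^4) > 0$, and derive a contradiction. Your explicit remark that the hypotheses $g \in \mathcal{Y}_2^+(M^4)$ and $\beta(M^4,[g]) < 8$ are orientation-independent is a nice clarification that the paper leaves implicit.
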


\begin{proof}
Suppose $b_2(M^4) \neq 0$ and $b_2^{+}(M^4) \cdot b_2^{-}(M^4) >0$.  Then Lemma \ref{Lemma21} implies $\tau(M^4)=b_2^+(M^4)-b_2^-(M^4)>0$.  Since $b_2^-(M^4)$ is also non-zero, we can apply Lemma \ref{Lemma21} to $M^4$ endowed with the opposite orientation to show that the signature is again positive.  This is a contradiction, since changing the orientation changes the sign of the signature.  It follows that $b_2^{+}(M^4) \cdot b_2^{-}(M^4) = 0$, hence the intersection form is definite.
\end{proof}

Combining the two previous results with an {\em a priori} upper bound for the total $\sigma_2$-curvature, we can prove the following:

\begin{lemma}\label{Lemma25}
Let $M^4$ be a closed, oriented four-manifold admitting a metric $g \in \mathcal{Y}_2^{+}(M^4)$ with
\begin{equation} \label{8again}
\beta(M^4,[g]) < 8.
\end{equation}
If $b_2(M^4) > 0$, then (after possibly changing the orientation) $b_2^{+}(M^4) = 1$ and $b_2^{-}(M^4) = 0$.
\end{lemma}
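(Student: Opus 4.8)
The plan is to combine the previous structural results with a known \emph{a priori} upper bound on the total $\sigma_2$-curvature. First, by Corollary \ref{Cor21}, the hypothesis $\beta(M^4,[g]) < 8$ together with $b_2(M^4) > 0$ forces the intersection form to be definite; after possibly reversing orientation we may assume $b_2^{+}(M^4) = b_2(M^4) \geq 1$ and $b_2^{-}(M^4) = 0$, so that $\tau(M^4) = b_2^{+}(M^4)$ and (using $b_1 = 0$, which holds since $\mathcal{Y}_2^{+}(M^4) \neq \emptyset$ by \cite{Gur98}) $\chi(M^4) = 2 + b_2^{+}(M^4)$. The goal is then to rule out $b_2^{+}(M^4) \geq 2$.

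The key input is an upper bound $\int \sigma_2(P_g)\,dv_g \leq 4\pi^2$ for any $g \in \mathcal{Y}_2^{+}(M^4)$ on an oriented four-manifold. I expect this to be the sharp conformal bound coming from the work on the $\sigma_2$-Yamabe functional (cf. \cite{CGY02}, \cite{Gur00}): since $[g]$ contains a Yamabe representative with $\sigma_1(P) \geq 0$ and positive $\int \sigma_2(P)$, one has pointwise control $\sigma_2(P) \leq \tfrac{3}{2}\sigma_1(P)^2$ by the arithmetic inequality on eigenvalues, and the total $\sigma_1^2$-integral is bounded by the Yamabe invariant, which for such manifolds is at most $Y(S^4) $, i.e. $\int \sigma_2(P_g)\,dv_g \leq \tfrac{1}{16} Y(S^4)^2 = 4\pi^2$. (If the precise constant requires a slightly different argument I would cite the relevant statement directly rather than reprove it.) Feeding this into the Chern--Gauss--Bonnet formula \eqref{CGB} and using $\int \|W_g\|^2 \geq 0$ gives
\begin{align*}
8\pi^2 \chi(M^4) = \int \|W_g\|^2\,dv_g + 4\int \sigma_2(P_g)\,dv_g \leq \int \|W_g\|^2\,dv_g + 16\pi^2.
\end{align*}
On the other hand, from $\beta(M^4,[g]) < 8$ we have $\int \|W_g\|^2\,dv_g < 8 \int \sigma_2(P_g)\,dv_g \leq 32\pi^2$, hence $8\pi^2 \chi(M^4) < 48\pi^2$, i.e. $\chi(M^4) < 6$, so $\chi(M^4) \leq 5$ and therefore $b_2^{+}(M^4) \leq 3$. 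To push from $b_2^{+} \leq 3$ down to $b_2^{+} = 1$ I would instead combine the upper bound on $\sigma_2$ with the Gursky estimate \eqref{Gursky estimate}: since $b_2^{+}(M^4) > 0$ and $Y(M^4,[g]) > 0$, $\int \|W_g^{+}\|^2 \geq \tfrac{4}{3}\pi^2(2\chi + 3\tau)$, while the signature formula and $\beta < 8$ give an upper bound on $\int \|W_g^{+}\|^2$ in terms of $\int \sigma_2(P_g) \leq 4\pi^2$ and $\tau$; with $\chi = 2 + b_2^{+}$ and $\tau = b_2^{+}$ this becomes a numerical inequality in $b_2^{+}$ whose only solution with $b_2^{+} \geq 1$ is $b_2^{+} = 1$.

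The main obstacle is pinning down the correct \emph{a priori} upper bound $\int \sigma_2(P_g)\,dv_g \leq 4\pi^2$ and making sure the case-of-equality analysis is not needed here (we only want the strict topological conclusion, so a non-sharp bound that still yields $b_2^{+} = 1$ would suffice). A secondary point is bookkeeping with orientations: the bound on $\sigma_2$ and the value $4\pi^2$ are orientation-independent, but the Gursky estimate \eqref{Gursky estimate} is applied to the orientation for which $b_2^{+} > 0$, so I must be careful that after the reversal made via Corollary \ref{Cor21} all the inequalities are invoked for the same, fixed orientation. Once the numerical inequality in $b_2^{+}$ is set up correctly, the conclusion $b_2^{+}(M^4) = 1$, $b_2^{-}(M^4) = 0$ is immediate.
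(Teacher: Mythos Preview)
Your reduction to $b_2^+ \leq 3$ is essentially the paper's argument: use $\int\sigma_2(P_g)\,dv_g \leq 4\pi^2$ (this is Theorem~B of \cite{Gur99}, not \cite{CGY02} or \cite{Gur00}, and the constant in your pointwise heuristic should be $\sigma_2 \leq \tfrac{3}{8}\sigma_1^2$, giving $\int\sigma_2 \leq \tfrac{1}{96}Y^2$) together with $\beta < 8$ and Chern--Gauss--Bonnet to force $\chi(M^4) < 6$.

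The gap is in your second step. Invoking the Gursky estimate $\int\|W^+\|^2 \geq \tfrac{4}{3}\pi^2(2\chi + 3\tau)$ is too weak to rule out $b_2^+ \in \{2,3\}$. Concretely, with $b_2^- = 0$, $\chi = 2+k$, $\tau = k$, Chern--Gauss--Bonnet gives $\int\|W\|^2 = 8\pi^2(2+k) - 4\int\sigma_2$, so $\beta = \tfrac{8\pi^2(2+k)}{\int\sigma_2} - 4$; bounding $\int\sigma_2$ from above via Gursky's inequality yields $\int\sigma_2 \leq \tfrac{1}{3}\pi^2(8+k)$ and hence only $\beta \geq \tfrac{16+20k}{8+k}$, which is $5.6$ for $k=2$ and about $6.9$ for $k=3$ --- no contradiction with $\beta < 8$. (Equivalently, your proposed ``numerical inequality in $b_2^+$'' comes out to $b_2^+ < 16$.) The point is that once $b_2^- = 0$, the \emph{elementary} signature bound $\int\|W^+\|^2 \geq 12\pi^2\tau$ (from $\int\|W^-\|^2 \geq 0$) is strictly stronger than Gursky's bound for $k \geq 2$. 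This is what the paper uses: it gives $\int\sigma_2 \leq \pi^2(4-k)$ and hence $\beta \geq \tfrac{12k}{4-k}$, which equals $12$ for $k=2$ and $36$ for $k=3$, contradicting $\beta < 8$. So the fix is simply to drop the appeal to \eqref{Gursky estimate} in the second step and use the signature formula directly.
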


\begin{proof}
By Corollary \ref{Cor21} we may choose an orientation for which the intersection form is positive definite, so $b_2^{+}(M^4) > 0$ and $b_2^{-}(M^4) = 0$.  Also, by Corollary F of \cite{Gur98}, if $g\in\mathcal{Y}_2^+(M^4)$, $b_1(M^4) = 0$.  Therefore,
\begin{align} \label{chars} \begin{split}
\chi(M^4) &= 2 + b_2^{+}(M^4), \\
\tau(M^4) &= b_2^{+}(M^4) > 0.
\end{split}
\end{align}
By the Chern-Gauss-Bonnet formula and (\ref{corollary 2.4}),
\begin{align} \label{CG} \begin{split}
8\pi^2\chi(M^4) &= \int \| W_g \|^2 \,dv_g + 4 \int \sigma_2(P_g) \,dv_g \\
&< 12 \int \sigma_2(P_g) \,dv_g.
\end{split}
\end{align}
By Theorem B of \cite{Gur99}, we have the bound
\begin{equation} \label{Gursky's inequality}
\int \sigma_2(P_g) \,dv_g \leq 4 \pi^2,
\end{equation}
and equality holds if and only if $(M^4,g)$ is conformally equivalent to the round sphere.  More generally, since the integral is conformally invariant it is easy to show that
\begin{align} \label{S2Y}
\int \sigma_2(P_g) \,dv_g \leq \frac{1}{96} Y(M^4,[g])^2.
\end{align}

Since $b_2(M^4) > 0$ strict inequality must
hold, and substituting this into (\ref{CG}) we get
\begin{align} \label{CG2} \begin{split}
8\pi^2\chi(M^4) &< 12 \int \sigma_2(P_g) \,dv_g  \\
&< 48 \pi^2,
\end{split}
\end{align}
hence $\chi(M^4)<6$. By (\ref{chars}), we see that $1 \leq b_2^+(M^4) \leq 3$.
It therefore suffices to rule out the possibilities $b_2^+(M^4)=2$ and $b_2^+(M^4)=3$.

If $b_2^+(M^4)=2$ then $\chi(M^4)=4$, so by the Chern-Gauss-Bonnet formula
\[\int ||W_g ||^2 \,dv_g +  4 \int \sigma_2(P_g) \,dv_g = 32\pi^2.  \]
Also, $b_2^+(M^4)=2$ implies $\tau(M^4) = 2$, so the signature formula gives
\begin{align*}
\int ||W^{+}_g ||^2 \,dv_g &= \int ||W^{-}_g ||^2 \,dv_g + 24\pi^2 \\
&\geq 24 \pi^2.
\end{align*}
It follows that
\[\int ||W_g||^2 \,dv_g \geq 24\pi^2, \quad \int \sigma_2(P_g) \,dv_g \leq  2 \pi^2.\]
Therefore,
\[\int ||W_g ||^2 \,dv_g \geq 12 \int \sigma_2(P_g) \,dv_g,\]
which contradicts (\ref{8again}).

If $b_2^+(M^4)=3$, we can apply the same argument to conclude
\[\int ||W_g ||^2 \,dv_g \geq 36 \int \sigma_2(P_g) \,dv_g,\]
which also contradicts (\ref{8again}).  Therefore, $b_2^+(M^4)=1$.
\end{proof}

\begin{remark}
The preceding lemma implies that if we take $\epsilon\leq1$ in Theorem A, then $b_2^+(M^4)>0$ will show that $b_2^+(M^4)=1$.
Note that the work \cite{Don} and \cite{Fr} of Donaldson and Freedman will imply that the manifold is \emph{homeomorphic} to $\mathbb{CP}^2$ in this case.
\end{remark}

\begin{proof}[The Proof of Theorem C]  We can now combine Theorem \ref{ThmGur98} and Lemma \ref{Lemma21} to give the proof of Theorem C.  Assuming $b_2^{+}(M^4) \cdot b_2^{-}(M^4) > 0$, it follows from Lemma \ref{Lemma21} that $\beta(M^4,[g]) \geq 8$ for any metric $g \in \mathcal{Y}_2^{+}(M^4)$.  Moreover, if equality holds, then since $b_2^{+}(M^4) > 0$ we can argue as in the proof of Lemma \ref{Lemma21} to get equality in (\ref{rewritten}):
\begin{equation}\label{rewritten2}
  \int ||W_g^{+}||^2dv_g = \frac{4}{3}\pi^2(2\chi(M^4)+3\tau(M^4))+2\pi^2\tau(M^4).
\end{equation}
By Theorem \ref{ThmGur98} we conclude that $\tau(M^4) \geq 0$.  Reversing orientation and applying the same argument (since $b_2^{-}(M^4) > 0$) we also get $-\tau(M^4) \geq 0$, hence $\tau(M^4) = 0$.  Substituting this into (\ref{rewritten2}) implies that we have equality in (\ref{Gursky theorem}).  Therefore, $g$ is conformal to K\"ahler-Einstein metric $g_{KE}$.  By Proposition 2 of \cite{Der}, $\nabla W^{+}_{g_{KE}} \equiv 0$.

Applying the same argument with the opposite orientation, we see that $g$ is conformal to a K\"ahler-Einstein metric $g_{KE}'$.  By Obata's theorem \cite{Obata}, Einstein metrics are unique in their conformal class (except in the case of the sphere, which is ruled out in this case).  Therefore, $g_{KE} = g_{KE}'$, and since equality holds in (\ref{Gursky theorem}) with the opposite orientation it follows that $\nabla W^{-}_{g_{KE}} \equiv 0$.  We conclude that $g_{KE}$ is locally symmetric and Einstein; it follows from the classification of such spaces (for example, \cite{Jensen}) that $(M^4,g_{KE})$ is isometric to $(S^2 \times S^2, g_p)$, and Theorem C follows.
\end{proof}

\vskip.1in

\subsection{A preliminary lemma}  We end this section with a technical lemma that will be used in the proof of Theorem A.

\begin{lemma} \label{L2pinchLemma}
Let $(M^4,g)$ be a closed, compact oriented Riemannian four-manifold with $b_2^+(M^4)>0$ and
\begin{equation}\label{CP^2 pinching}
\beta(M^4,[g])=4(1+\epsilon)
\end{equation}
for some $0\leq\epsilon<1$.  Then
\begin{equation}\label{W^-}
\int_M||W^-||^2dv=\frac{6\epsilon}{2+\epsilon}\pi^2,
\end{equation}
\begin{equation}\label{W^+}
\int||W^+||^2dv=12\pi^2+\int||W^-||^2 \,dv,
\end{equation}
and
\begin{equation}\label{Yamabe constant}
Y(M^4,[g])\geq\frac{24\pi}{\sqrt{2+\epsilon}}.
\end{equation}
\end{lemma}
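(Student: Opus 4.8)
The plan is to reduce the statement to the topological normalization furnished by Lemma~\ref{Lemma25}, and then to read off (\ref{W^-}), (\ref{W^+}) and (\ref{Yamabe constant}) directly from the Chern--Gauss--Bonnet formula, the signature formula, and the conformally invariant bound (\ref{S2Y}).

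First, since $0 \le \epsilon < 1$ we have $\beta(M^4,[g]) = 4(1+\epsilon) < 8$, so the hypotheses of Lemma~\ref{Lemma25} are met. Because $b_2^+(M^4) > 0$ for the given orientation, the definite intersection form provided by Corollary~\ref{Cor21} must in fact be positive definite (a negative definite form would force $b_2^+(M^4) = 0$), so no change of orientation is needed, and Lemma~\ref{Lemma25} gives $b_2^+(M^4) = 1$, $b_2^-(M^4) = 0$. Together with $b_1(M^4) = 0$ (Corollary F of \cite{Gur98}) this determines the topology:
\[
\chi(M^4) = 2 + b_2^+(M^4) = 3, \qquad \tau(M^4) = b_2^+(M^4) = 1.
\]

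Next I would substitute $\chi(M^4) = 3$ into the Chern--Gauss--Bonnet formula (\ref{CGB}) and use the relation $\int \|W_g\|^2\,dv_g = 4(1+\epsilon)\int \sigma_2(P_g)\,dv_g$ coming from the definition (\ref{betadef}) of $\beta$. Solving the resulting linear equation $24\pi^2 = \big(4 + 4(1+\epsilon)\big)\int \sigma_2(P_g)\,dv_g$ gives
\[
\int \sigma_2(P_g)\,dv_g = \frac{6\pi^2}{2+\epsilon}, \qquad \int \|W_g\|^2\,dv_g = \frac{24(1+\epsilon)\pi^2}{2+\epsilon}.
\]
The signature formula in the form (\ref{W and signature}), equivalent to $\int \|W^+\|^2\,dv - \int \|W^-\|^2\,dv = 12\pi^2\,\tau(M^4)$, with $\tau(M^4) = 1$ immediately yields (\ref{W^+}). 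Adding the identity $\int \|W^+\|^2\,dv + \int \|W^-\|^2\,dv = \int \|W_g\|^2\,dv_g$ and eliminating $\int \|W^+\|^2\,dv$ gives $2\int \|W^-\|^2\,dv = \frac{24(1+\epsilon)\pi^2}{2+\epsilon} - 12\pi^2 = \frac{12\epsilon\pi^2}{2+\epsilon}$, which is (\ref{W^-}).

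Finally, for (\ref{Yamabe constant}) I would apply the conformally invariant inequality (\ref{S2Y}), namely $\int \sigma_2(P_g)\,dv_g \le \tfrac{1}{96}\,Y(M^4,[g])^2$, to the value $\int \sigma_2(P_g)\,dv_g = \frac{6\pi^2}{2+\epsilon}$ just computed; this gives $Y(M^4,[g])^2 \ge \frac{576\pi^2}{2+\epsilon}$, and since $g \in \mathcal{Y}_2^+(M^4)$ forces $Y(M^4,[g]) > 0$ we may take positive square roots to obtain the claimed bound. I do not expect any genuine obstacle here: the only non-routine ingredient is the topological rigidity of Lemma~\ref{Lemma25}, which is precisely where the hypothesis $\epsilon < 1$ (hence $\beta < 8$) enters; once $\chi(M^4)$ and $\tau(M^4)$ are pinned down, everything else is linear bookkeeping with the Gauss--Bonnet and signature integrals together with (\ref{S2Y}).
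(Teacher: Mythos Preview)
Your proof is correct and follows essentially the same approach as the paper: invoke Lemma~\ref{Lemma25} to pin down $\chi(M^4)=3$ and $\tau(M^4)=1$, then solve the Chern--Gauss--Bonnet and signature formulas together with $\beta=4(1+\epsilon)$ to obtain (\ref{W^-}) and (\ref{W^+}), and finish with (\ref{S2Y}) for the Yamabe bound. Your explicit remark that no orientation change is needed (since $b_2^+(M^4)>0$ already) is a small clarification the paper leaves implicit, but otherwise the arguments are the same.
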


\begin{proof}
It follows from Lemma \ref{Lemma25} that $b_1(M^4)=0$, $b_2^+(M^4)=1,$ and $b_2^-(M^4)=0$. Therefore, $\chi(M^4)=3$ and $\tau(M^4)=1$.
By the Chern-Gauss-Bonnet and signature formulas, we have
\begin{equation}\label{GBC}
24\pi^2=\int||W||^2dv+4\int\sigma_2(P)dv,
\end{equation}
\begin{equation}\label{Sign}
12\pi^2=\int||W^+||^2dv-\int||W^-||^2dv.
\end{equation}
Since $\beta(M^4,[g])=4(1+\epsilon)$, we have
\begin{align}  \label{IS2}
4\int\sigma_2(P)dv = \frac{1}{1 + \epsilon}\int \| W \|^2 dv.
\end{align}
Substituting this into (\ref{GBC}) we conclude
\begin{align}  \label{allW}
\int \|W\|^2 dv = 24\left( \frac{1 + \epsilon}{2 + \epsilon}\right) \pi^2.
\end{align}
By (\ref{Sign}),
\begin{align*}
12\pi^2 &= \int||W^+||^2dv-\int||W^-||^2dv \\
&= \int \|W\|^2 dv - 2 \int \| W^{-}\|^2 dv \\
&= 24 \left( \frac{1 + \epsilon}{2 + \epsilon}\right) \pi^2 - 2 \int \| W^{-}\|^2 dv,
\end{align*}
which implies (\ref{W^-}).  Also, substituting (\ref{W^-}) into the signature formula (\ref{Sign}) we get (\ref{W^+}).

To prove (\ref{Yamabe constant}), we fist observe that (\ref{IS2}) and (\ref{allW}) imply
\begin{align*}
\int\sigma_2(P)dv = \left( \frac{6}{2+\epsilon}\right) \pi^2.
\end{align*}
Therefore, by (\ref{S2Y}),
\begin{align} \label{S2Y}
\left( \frac{6}{2+\epsilon}\right) \pi^2 = \int \sigma_2(P_g) \,dv_g \leq \frac{1}{96}Y(M^4,[g])^2,
\end{align}
and (\ref{Yamabe constant}) follows.
\end{proof}

\section{Modified Yamabe metrics}

As mentioned in the Introduction, the proof of Theorems A and B will use the Ricci flow.  We will use the fact that our assumptions are conformally invariant
and choose an initial metric that satisfies certain key estimates.   The metric will be a solution of a modified version of the Yamabe problem introduced in \cite{Gur00},
which we now review.

Let $(M^4,g)$ be a Riemannian four-manifold. Define
\begin{equation}\label{variant of scalar curvature}
F_g^+=R_g-2\sqrt{6}||W_g^+||,
\end{equation}
and
\begin{equation}\label{variant of conformal laplacian}
\mathcal{L}_g=-6\Delta_g+R_g-2\sqrt{6}||W^+||.
\end{equation}
$\mathcal{L}_g$ is a variant of conformal Laplacian that satisfies the following conformal transformation law:
\begin{equation}\label{conformal tranformation law}
\mathcal{L}_{\widetilde{g}}\phi=u^{-3}\mathcal{L}_g(\phi{u}),
\end{equation}
where $\widetilde{g}=u^2g\in[g]$. In analogy to the Yamabe problem, we define the functional
\begin{equation}\label{variant of Yamabe quotient}
\widehat{Y}_g[u]=\left\langle{u,\mathcal{L}_gu}\right\rangle_{L^2}/||u||_{L^4}^2,
\end{equation}
and the associated conformal invariant
\begin{equation}\label{variant of Yamabe constant}
\widehat{Y}(M^4,[g])=\inf_{u\in{W^{1,2}(M,g)}}\widehat{Y}_g[u].
\end{equation}
By the conformal transformation law of $\mathcal{L}_g$, the functional $u\to\widehat{Y}_g[u]$ is equivalent to the Riemannian functional
\begin{equation}\label{variant of definition}
\widetilde{g}=u^2g\to{vol(\widetilde{g})}^{-\frac{1}{2}}\int F_{\widetilde{g}}^+ \,dv_{\widetilde{g}}.
\end{equation}
The motivation for introducing this invariant is explained in the following result (see Theorem 3.3 and Proposition 3.5 of \cite{Gur00}):

\begin{theorem}
(i) Suppose $M^4$ admits a metric $g$ with $F_g^+\geq0$ on $M^4$ and $F_g^+>0$ somewhere. Then $b_2^+(M^4)=0$.
(ii) If $b_2^+(M^4)>0$, then $M^4$ admits a metric $g$ with $F_g^+\equiv0$ if and only if $(M^4,g)$ is a K\"ahler manifold with non-negative scalar curvature.
(iii) If $Y(M^4,[g])>0$ and $b_2^+(M^4)>0$, then $\widehat{Y}(M^4,[g])\leq0$ and there is a metric $\widetilde{g}=u^2g$ such that
\begin{equation}
F_{\widetilde{g}}^+=R_{\widetilde{g}}-2\sqrt{6}||W^+||_{\widetilde{g}}\equiv\hat{Y}(M,[g])\leq0
\end{equation}
and
\begin{equation}\label{Gap W+}
  \int R_{\widetilde{g}}^2 \,dv_{\widetilde{g}}\leq 24\int ||W^+_{\widetilde{g}}||^2 \,dv_{\widetilde{g}}.
\end{equation}
Furthermore, equality is achieved if and only if $F_{\widetilde{g}}^+\equiv0$ and $R_{\widetilde{g}}=2\sqrt{6}||W^+_{\widetilde{g}}||\equiv{const}$.
\end{theorem}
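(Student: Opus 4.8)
The plan is to prove the three parts in turn, with part (i) serving as the vanishing tool behind part (iii). For (i), I would argue by a Bochner--Weitzenb\"ock estimate: if $\alpha$ is a harmonic self-dual two-form, the Weitzenb\"ock formula on $\Omega^2_+$ gives $0=\Delta\alpha=\nabla^*\nabla\alpha-2W^+(\alpha)+\tfrac{R}{3}\alpha$, so pairing with $\alpha$ and integrating yields $0=\int|\nabla\alpha|^2-2\int\langle W^+\alpha,\alpha\rangle+\tfrac13\int R|\alpha|^2$. Since $W^+$ is a trace-free symmetric endomorphism of the rank-three bundle $\Omega^2_+$, the elementary constraints $\sum\lambda_i=0$ and $\sum\lambda_i^2=\|W^+\|^2$ force its largest eigenvalue to be at most $\tfrac{\sqrt6}{3}\|W^+\|$, whence $\langle W^+\alpha,\alpha\rangle\le\tfrac{\sqrt6}{3}\|W^+\|\,|\alpha|^2$ pointwise; the coefficient $2\sqrt6$ in the definition of $F^+$ is chosen precisely so that this produces $0\ge\int|\nabla\alpha|^2+\tfrac13\int F^+|\alpha|^2$. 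If $F^+\ge0$, both terms must vanish, so $\alpha$ is parallel (hence $|\alpha|$ is constant) and $F^+|\alpha|^2\equiv0$; if moreover $F^+>0$ somewhere then $|\alpha|\equiv0$, so there are no nontrivial self-dual harmonic two-forms and $b_2^+(M^4)=0$ by Hodge theory.

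Part (ii) uses the same identity. The ``if'' direction is the pointwise fact that on a K\"ahler surface $\|W^+\|^2=R^2/24$, so $R\ge0$ gives $2\sqrt6\|W^+\|=R$, i.e. $F^+\equiv0$. For ``only if'', if $F^+\equiv0$ and $b_2^+>0$ then the estimate above forces a nontrivial harmonic self-dual $\alpha$ to be parallel \emph{and} to saturate the eigenvalue bound, so at each point $W^+$ has eigenvalues $(-\mu,-\mu,2\mu)$ and $\alpha$ spans the top eigenspace; a parallel self-dual two-form of constant length is (after scaling) the K\"ahler form of a parallel compatible complex structure, so $(M^4,g)$ is K\"ahler, and $F^+\equiv0$ gives $R=2\sqrt6\|W^+\|\ge0$.

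For (iii), I would first show $\widehat Y(M^4,[g])\le0$. Since $F^+\le R$ pointwise, $\widehat Y_g[u]\le Y_g[u]$ for all $u$, hence $\widehat Y(M^4,[g])\le Y(M^4,[g])\le Y(S^4)$. If $\widehat Y(M^4,[g])>0$, then either it is $<Y(S^4)$, in which case the modified Yamabe problem for $\mathcal L_g$ has a positive minimizer $u$ with $\mathcal L_g u=\widehat Y u^3$ (solvable because the infimum lies strictly below the round-sphere threshold, so minimizing sequences do not concentrate), and then $\widetilde g=u^2g$ has $F^+_{\widetilde g}=u^{-3}\mathcal L_g u\equiv\widehat Y>0$, contradicting part (i) and $b_2^+>0$; or it equals $Y(S^4)$, forcing $Y(M^4,[g])=Y(S^4)$ and hence $(M^4,[g])$ conformal to the round sphere, again impossible. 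So $\widehat Y(M^4,[g])\le0$, and the extremal metric is produced similarly: when $\widehat Y<0$ the negative infimum of $\widehat Y_g$ is attained (now with no concentration obstruction, since $\widehat Y<0<Y(S^4)$), giving $\widetilde g=u^2g$ with $F^+_{\widetilde g}\equiv\widehat Y$; when $\widehat Y=0$ one takes $u>0$ to be the first eigenfunction of $\mathcal L_g$ (first eigenvalue zero), so $F^+_{\widetilde g}\equiv0$. Finally, $F^+_{\widetilde g}\equiv\widehat Y$ means $2\sqrt6\|W^+_{\widetilde g}\|=R_{\widetilde g}-\widehat Y$, so $24\|W^+_{\widetilde g}\|^2=(R_{\widetilde g}-\widehat Y)^2$ and
\[
24\int\|W^+_{\widetilde g}\|^2\,dv_{\widetilde g}-\int R_{\widetilde g}^2\,dv_{\widetilde g}=-2\widehat Y\int R_{\widetilde g}\,dv_{\widetilde g}+\widehat Y^2\,\mathrm{vol}(\widetilde g)=\widehat Y\Big(\widehat Y\,\mathrm{vol}(\widetilde g)-2\int R_{\widetilde g}\,dv_{\widetilde g}\Big).
\]
Because $Y(M^4,[g])>0$ we have $\int R_{\widetilde g}\,dv_{\widetilde g}\ge Y(M^4,[g])\,\mathrm{vol}(\widetilde g)^{1/2}>0$, so the bracket is strictly negative while $\widehat Y\le0$; hence the right-hand side is $\ge0$, which is (\ref{Gap W+}), with equality if and only if $\widehat Y(M^4,[g])=0$, i.e. $F^+_{\widetilde g}\equiv0$. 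In that case part (ii) says $\widetilde g$ is K\"ahler with $R_{\widetilde g}=2\sqrt6\|W^+_{\widetilde g}\|\ge0$, and unwinding the remaining rigidity shows this scalar curvature is constant.

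The step I expect to be the main obstacle is the existence part of (iii): solving the modified Yamabe problem for $\mathcal L_g$, whose nonlinearity $u^3$ sits at the critical Sobolev exponent in dimension four, so that attainment of the infimum requires excluding concentration of minimizing sequences. As in the classical Yamabe problem this is controlled by the comparison $\widehat Y(M^4,[g])\le0<Y(S^4)$ proved at the outset; the other delicate point is the constant-scalar-curvature conclusion in the equality case.
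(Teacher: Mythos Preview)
The paper does not prove this theorem at all: it is quoted from \cite{Gur00} (Theorem~3.3 and Proposition~3.5 there), so there is no ``paper's own proof'' to compare against. Your sketch is essentially the argument of \cite{Gur00}: the Bochner--Weitzenb\"ock formula on $\Omega^2_+$ together with the sharp eigenvalue bound $\lambda_{\max}(W^+)\le\tfrac{\sqrt6}{3}\|W^+\|$ gives (i) and (ii), and (iii) is obtained by solving the modified Yamabe problem for $\mathcal L_g$ once one knows $\widehat Y\le 0<Y(S^4)$; your algebraic derivation of \eqref{Gap W+} from $F^+_{\tilde g}\equiv\widehat Y$ is exactly how the integral inequality is extracted. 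You have correctly isolated the two genuinely nontrivial points---existence of the extremal $u>0$ at the critical exponent (handled in \cite{Gur00} via the strict inequality $\widehat Y<Y(S^4)$, following the Yamabe template), and the constancy of $R_{\tilde g}$ in the equality case. For the latter, note that your computation already shows equality in \eqref{Gap W+} forces $\widehat Y=0$ and hence $F^+_{\tilde g}\equiv0$; the constancy of $R_{\tilde g}$ is then an additional piece of information coming from the rigidity in (ii) (the metric is K\"ahler and the harmonic self-dual form is parallel with $W^+$ having the special eigenvalue pattern), not from the integral identity alone, and in \cite{Gur00} this is argued separately.
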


\begin{remark}
Recall that $F_g^+\equiv0$ on a K\"ahler manifold $(M^4,g)$ with $R\geq0$.
\end{remark}

\begin{remark}
In the rest of the paper, we will refer to the metric $\widetilde{g}$ in (iii) of Theorem 3.1, normalized to have unit volume, a {\em modified Yamabe metric}, and denote it by $g_m$. To simplify the notation, we write
$\hat{Y}(M,[g])=-\mu_+$. Then $g_m$ satisfies
\begin{equation}
R_{g_m}-2\sqrt{6}||W^+||_{g_m}=-\mu_+\leq0
\end{equation}
and (\ref{Gap W+}).
\end{remark}

As a preparation for the proof of Theorem A in next section, in the rest of this section we will list some preliminary curvature estimates of the modified Yamabe metric $g_m\in[g]$ with the assumption $b_2^+(M^4)>0$ and $\beta(M,[g])=4(1+\epsilon)$.

\begin{lemma}\label{initial metric}
Let $(M^4,g)$ be a closed, compact oriented Riemannian four-manifold with $b_2^+(M^4)>0$ and
\begin{equation}\label{CP^2 pinching}
\beta(M^4,[g])=4(1+\epsilon)
\end{equation}
for some $0<\epsilon<1$,
then we have for the modified Yamabe metric $g_m\in[g]$
\begin{equation}\label{g_m W-}
\int_M||W^-_{g_m}||^2dv_{g_m}=\frac{6\epsilon}{2+\epsilon}\pi^2,
\end{equation}
\begin{equation}\label{g_m W+}
\int||W^+_{g_m}||^2dv_{g_m}=12\pi^2+\int||W^-_{g_m}||^2dv_{g_m},
\end{equation}
\begin{equation}\label{g_m Yamabe}
Y(M,[g_m])\geq\frac{24\pi}{\sqrt{2+\epsilon}}
\end{equation}
\begin{equation}\label{g_m E}
\int|E_{g_m}|^2dv_{g_m}\leq6\int||W^-_{g_m}||dv_{g_m},
\end{equation}
\begin{equation}\label{g_m mu}
\frac{1}{12}\mu_+{Y}\leq3\int||W^-_{g_m}||^2dv_{g_m},
\end{equation}
\begin{equation}\label{g_m R average}
\frac{1}{24}\int(R_{g_m}-\bar{R}_{g_m})^2dv_{g_m}\leq3\int||W^-_{g_m}||^2dv_{g_m},
\end{equation}
where $\bar{R}_{g_m}=\int{R_{g_m}}dv_{g_m}$.
\end{lemma}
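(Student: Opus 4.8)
The plan is to deduce (\ref{g_m W-}), (\ref{g_m W+}) and (\ref{g_m Yamabe}) directly from conformal invariance, and then to extract the three curvature estimates (\ref{g_m E}), (\ref{g_m mu}) and (\ref{g_m R average}) by playing the Chern--Gauss--Bonnet formula (\ref{CGB}), the gap inequality (\ref{Gap W+}), and the defining equation $F^+_{g_m}\equiv-\mu_+$ of the modified Yamabe metric against one another. Throughout I take all integrals with respect to $g_m$, which has unit volume; I write $E_{g_m}$ for the trace-free part of the Ricci tensor and abbreviate $x:=\int\|W^-_{g_m}\|^2\,dv_{g_m}$.

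Since $g_m\in[g]$ and each of $\beta(M^4,[\cdot])$, $Y(M^4,[\cdot])$, $\int\|W^+\|^2\,dv$ and $\int\|W^-\|^2\,dv$ is a conformal invariant, the hypothesis $\beta(M^4,[g])=4(1+\epsilon)$ with $0<\epsilon<1$ lets me apply Lemma \ref{L2pinchLemma} with $g_m$ in place of $g$; this gives (\ref{g_m W-}), (\ref{g_m W+}) and (\ref{g_m Yamabe}) at once, so that $x=\tfrac{6\epsilon}{2+\epsilon}\pi^2$ and $\int\|W^+_{g_m}\|^2\,dv_{g_m}=12\pi^2+x$. As in the proof of that lemma, Lemma \ref{Lemma25} forces $b_1(M^4)=0$, $b_2^+(M^4)=1$, $b_2^-(M^4)=0$, hence $\chi(M^4)=3$. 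Substituting the standard identity $\sigma_2(P_{g_m})=\tfrac1{96}R_{g_m}^2-\tfrac18|E_{g_m}|^2$ into (\ref{CGB}), and using $\chi(M^4)=3$ together with (\ref{g_m W+}) to eliminate $\int\|W^+_{g_m}\|^2\,dv_{g_m}$, yields the identity
\[
\int R_{g_m}^2\,dv_{g_m}=288\pi^2-48x+12\int|E_{g_m}|^2\,dv_{g_m}.
\]
Combined with (\ref{Gap W+}), in the form $\tfrac1{24}\int R_{g_m}^2\,dv_{g_m}\le\int\|W^+_{g_m}\|^2\,dv_{g_m}=12\pi^2+x$, this immediately gives $\int|E_{g_m}|^2\,dv_{g_m}\le6\int\|W^-_{g_m}\|^2\,dv_{g_m}$, which is (\ref{g_m E}). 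Moreover, since $Y(M^4,[g_m])\le\bar R_{g_m}$ and (\ref{g_m Yamabe}) gives $\bar R_{g_m}^2\ge Y(M^4,[g_m])^2\ge\tfrac{576\pi^2}{2+\epsilon}=288\pi^2-48x$, substituting this bound and the displayed identity into $\int(R_{g_m}-\bar R_{g_m})^2\,dv_{g_m}=\int R_{g_m}^2\,dv_{g_m}-\bar R_{g_m}^2$ (unit volume) and invoking (\ref{g_m E}) gives $\int(R_{g_m}-\bar R_{g_m})^2\,dv_{g_m}\le12\int|E_{g_m}|^2\,dv_{g_m}\le72x$, which is (\ref{g_m R average}) after dividing by $24$.

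The remaining estimate (\ref{g_m mu}) is the delicate one. I would multiply $F^+_{g_m}=R_{g_m}-2\sqrt6\,\|W^+_{g_m}\|\equiv-\mu_+$ by $R_{g_m}$ and integrate to obtain $\int R_{g_m}^2\,dv_{g_m}+\mu_+\bar R_{g_m}=2\sqrt6\int R_{g_m}\|W^+_{g_m}\|\,dv_{g_m}$, then apply Cauchy--Schwarz on the right-hand side. With $a:=(\int R_{g_m}^2\,dv_{g_m})^{1/2}$ and $b:=(\int\|W^+_{g_m}\|^2\,dv_{g_m})^{1/2}$, this becomes $\mu_+\bar R_{g_m}\le a(2\sqrt6\,b-a)$, where $a\le2\sqrt6\,b$ by (\ref{Gap W+}) and $a\ge\bar R_{g_m}\ge Y(M^4,[g_m])>0$. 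The key observation is that $(2\sqrt6\,b)^2-a^2=24\int\|W^+_{g_m}\|^2\,dv_{g_m}-\int R_{g_m}^2\,dv_{g_m}=12\big(6x-\int|E_{g_m}|^2\,dv_{g_m}\big)\ge0$ by (\ref{g_m W+}), the displayed identity and (\ref{g_m E}); writing $a(2\sqrt6\,b-a)=a\big[(2\sqrt6\,b)^2-a^2\big]/(2\sqrt6\,b+a)$ and using $2\sqrt6\,b+a\ge2a$ then yields $\mu_+\bar R_{g_m}\le6\big(6x-\int|E_{g_m}|^2\,dv_{g_m}\big)\le36x$. Since $\mu_+\ge0$ and $Y(M^4,[g_m])\le\bar R_{g_m}$, this gives $\tfrac1{12}\mu_+Y(M^4,[g_m])\le3x$, which is (\ref{g_m mu}). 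I expect this to be the main obstacle: one has to notice that (\ref{Gap W+}) and (\ref{g_m E}) together pin $\int R_{g_m}^2\,dv_{g_m}$ to within a fixed multiple of $x$ of $24\int\|W^+_{g_m}\|^2\,dv_{g_m}$, so that the Cauchy--Schwarz loss is itself controlled by $x$ with exactly the constant claimed, and the elementary bound $a/(2\sqrt6\,b+a)\le\tfrac12$ is what converts this near-equality into the stated inequality. Everything else is bookkeeping with conformally invariant quantities once the Chern--Gauss--Bonnet identity above is available.
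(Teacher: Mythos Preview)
Your argument is correct, but for (\ref{g_m mu}) you take a noticeably longer route than the paper. The paper \emph{squares} the defining equation $R_{g_m}+\mu_+=2\sqrt6\,\|W^+_{g_m}\|$ rather than multiplying it by $R_{g_m}$. Integrating the square and combining with the identity $\int\|W^+_{g_m}\|^2\,dv_{g_m}-4\int\sigma_2(P_{g_m})\,dv_{g_m}=3\int\|W^-_{g_m}\|^2\,dv_{g_m}$ (the same Chern--Gauss--Bonnet/signature computation you did) produces the exact equation
\[
\frac12\int|E_{g_m}|^2\,dv_{g_m}\;+\;\frac{1}{12}\,\mu_+\!\int R_{g_m}\,dv_{g_m}\;+\;\frac{1}{24}\,\mu_+^2\;=\;3\int\|W^-_{g_m}\|^2\,dv_{g_m}.
\]
All three summands on the left are nonnegative, so (\ref{g_m E}) and (\ref{g_m mu}) fall out simultaneously with no Cauchy--Schwarz, no algebraic factorisation $a(2\sqrt6\,b-a)=a[(2\sqrt6\,b)^2-a^2]/(2\sqrt6\,b+a)$, and no appeal to (\ref{Gap W+}) at all (indeed (\ref{Gap W+}) is itself just the consequence $\int R_{g_m}^2\le24\int\|W^+_{g_m}\|^2$ of dropping the two $\mu_+$-terms from the squared equation). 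Your approach works, and your ``key observation'' is a perfectly valid way to recover the correct constant, but the paper's identity shows that the phenomenon you describe as ``delicate'' is in fact an equality: the Cauchy--Schwarz and $a/(2\sqrt6\,b+a)\le\tfrac12$ losses exactly cancel the $\tfrac12\int|E_{g_m}|^2$ and $\tfrac1{24}\mu_+^2$ terms you dropped. For (\ref{g_m W-})--(\ref{g_m Yamabe}) and (\ref{g_m R average}) your argument and the paper's coincide.
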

\begin{proof}
(\ref{g_m W-})(\ref{g_m W+})(\ref{g_m Yamabe}) follow from (\ref{W^-})(\ref{W^+})(\ref{Yamabe constant}) of Lemma 2.7 and confomal invariance.   The estimates (\ref{GBC}) and (\ref{Sign}) imply
\begin{equation}\label{difference}
\int||W^+||^2dv-4\int\sigma_2(P_{g_m})dv_{g_m}=3\int||W^-||_{g_m}^2dv_{g_m}.
\end{equation}
Recall the modified Yamabe metric stasfies $R_{g_m}+\mu_+=2\sqrt{6}||W^+_{g_m}||$. Squaring both sides of this formula and integrating over $M^4$, we have
\[\frac{1}{24}\int\left(R_{g_m}^2+2\mu_+R_{g_m}+\mu_+^2\right)dv_{g_m}=\int||W^+_{g_m}||^2dv_{g_m}.\]
With (\ref{difference}), we can rewrite this equation in the following way:
\[\frac{1}{2}\int|E_{g_m}|^2dv+\frac{1}{12}\mu_+\int{R_{g_m}}dv_{g_m}+\frac{1}{24}\int\mu_+^2dv_{g_m}=3\int||W^-_{g_m}||^2dv_{g_m}.\]
Since $\int{R_{g_m}}dv_{g_m}\geq Y(M^4,[g_m])>0$, (\ref{g_m E}) and (\ref{g_m mu}) follow from this equation.
To see (\ref{g_m R average}), we have
\begin{align*}
\frac{1}{24}\int(R_{g_m}-\bar{R}_{g_m})^2dv_{g_m} & =\frac{1}{24}\left(\int{R}^2_{g_m}dv_{g_m}-\bar{R}^2_{g_m}\right)\leq\frac{1}{24}\left(\int{R}^2_{g_m}dv_{g_m}-Y^2\right) \\
                                                                                         & \leq{\int||W^+_{g_m}||^2dv_{g_m}-4\int\sigma_2(P_{g_m})dv_{g_m}}=3\int||W^-_{g_m}||^2dv_{g_m}.
\end{align*}
\end{proof}

We end this section with a conformally invariant characterization of the Fubini-Study metric:

\begin{lemma}\label{CP2}
Let $(M^4,g)$ be a closed, compact oriented Riemannian four-manifold whose metric $g$ is of positive Yamabe type.
In addition, assume $b_1(M^4)=b_2^-(M^4)=0$ and $b_2^+(M^4)=1$. Then
\[\int_M\sigma_2(P) \,dv \leq12\pi^2\]
and equality holds if and only if $(M^4,g)$ is conformally equivalent to $(\mathbb{CP}^2,g_{FS})$.
\end{lemma}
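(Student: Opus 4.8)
The plan is to combine the Chern--Gauss--Bonnet formula with the sharp estimate \eqref{Gursky theorem} of Theorem \ref{ThmGur98}. Under the hypotheses $b_1(M^4)=b_2^-(M^4)=0$, $b_2^+(M^4)=1$, and $Y(M^4,[g])>0$, we have $\chi(M^4)=3$ and $\tau(M^4)=1$, so the signature formula gives $\int\|W^+\|^2\,dv-\int\|W^-\|^2\,dv=12\pi^2$, and in particular $\int\|W^+\|^2\,dv\ge 12\pi^2$ with equality iff $W^-\equiv 0$. Meanwhile the Chern--Gauss--Bonnet formula \eqref{CGB} reads
\begin{align*}
24\pi^2=\int\|W\|^2\,dv+4\int\sigma_2(P)\,dv=\int\|W^+\|^2\,dv+\int\|W^-\|^2\,dv+4\int\sigma_2(P)\,dv.
\end{align*}
Using the signature identity to substitute $\int\|W^-\|^2\,dv=\int\|W^+\|^2\,dv-12\pi^2$, this becomes
\begin{align*}
4\int\sigma_2(P)\,dv=36\pi^2-2\int\|W^+\|^2\,dv.
\end{align*}
Since $b_2^+(M^4)=1>0$ and $Y(M^4,[g])>0$, \eqref{Gursky theorem} applied with $\chi=3,\tau=1$ gives $\int\|W^+\|^2\,dv\ge \frac{4\pi^2}{3}(6+3)=12\pi^2$. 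Hence $4\int\sigma_2(P)\,dv\le 36\pi^2-24\pi^2=12\pi^2$, i.e. $\int\sigma_2(P)\,dv\le 3\pi^2$.

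Wait — this gives the bound $3\pi^2$, not $12\pi^2$; I need to recheck the normalization of $\|W\|$ versus $|W|^2$. The paper's convention is $\|W\|=\tfrac14|W|^2$, and one must be careful whether the signature formula $48\pi^2\tau=\int(|W^+|^2-|W^-|^2)\,dv$ (in the four-tensor norm) translates, under $\|W^\pm\|=\tfrac14|W^\pm|^2$, to $12\pi^2\tau=\int(\|W^+\|-\|W^-\|)\,dv$. I would carefully redo this bookkeeping using the normalizations fixed in the Introduction and in Lemma \ref{Lemma21}, \ref{L2pinchLemma}; since Lemma \ref{L2pinchLemma} already records $\int\sigma_2(P)\,dv=\frac{6}{2+\epsilon}\pi^2$ under $\beta=4(1+\epsilon)$, setting $\epsilon=0$ there gives $\int\sigma_2(P)\,dv=3\pi^2$ at $\beta=4$ — but the Fubini--Study metric has $\beta=4$. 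This strongly suggests the stated bound $12\pi^2$ in Lemma \ref{CP2} uses a different normalization of $\sigma_2$ (likely the old convention of \cite{CGY03}, where the Schouten tensor $A$ differs from $P$ by a factor of $2$, making $\sigma_2(A)=4\sigma_2(P)$). So the first step of the actual proof is to pin down which $\sigma_2$ is meant; modulo that factor of $4$, the content is exactly the computation above, and I will present it with whichever normalization makes $12\pi^2$ correct (i.e. writing $4\int\sigma_2(P)\,dv\le 48\pi^2$, equivalently $\int\sigma_2(P)\,dv\le 12\pi^2$ in the $A$-normalization, or tracking an overall factor).

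The equality analysis is the substantive point. Equality in the final bound forces equality in \eqref{Gursky theorem}; by part (1) of Theorem \ref{ThmGur98}, $g$ is then conformal to a positive K\"ahler--Einstein metric $g_{KE}=e^{2w}g$. Simultaneously, equality propagates back through the signature identity to force $\int\|W^-\|^2\,dv=0$, hence $W^-_g\equiv 0$ and so $W^-_{g_{KE}}\equiv 0$; thus $g_{KE}$ is an anti-self-dual-Weyl, positive K\"ahler--Einstein four-manifold. Such a manifold has $W^+\neq 0$ (a positive K\"ahler--Einstein metric is never half-conformally-flat in both factors unless it is a space form), and the classification of K\"ahler--Einstein surfaces with $W^-=0$ — equivalently, Einstein metrics on $M^4$ with $b_2^-=0$, $b_2^+=1$, $\chi=3$, $\tau=1$ that are self-dual — identifies $(M^4,g_{KE})$ with $(\mathbb{CP}^2,g_{FS})$ (up to scale); this is the Hitchin--Thorpe/LeBrun-type rigidity for $\mathbb{CP}^2$, or can be seen directly since $W^-=0$ on a K\"ahler surface means the metric has constant scalar curvature with the canonical bundle proportional to the K\"ahler class in a way that forces $\mathbb{CP}^2$. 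I expect the main obstacle to be this last identification: verifying cleanly that a positive K\"ahler--Einstein metric with $W^-\equiv 0$ on a manifold with these Betti numbers must be Fubini--Study, rather than merely asserting it. One efficient route is to invoke that $W^-\equiv 0$ together with Einstein makes $g_{KE}$ \emph{self-dual Einstein}, and by the classification of compact self-dual Einstein four-manifolds with positive scalar curvature (Hitchin) the only possibilities are $S^4$ and $\mathbb{CP}^2$; the topological hypotheses $b_2^+=1$, $b_2^-=0$ exclude $S^4$, leaving $(\mathbb{CP}^2,g_{FS})$, and conversely this metric realizes equality.
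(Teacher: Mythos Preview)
Your argument is correct in content, and you rightly flagged the factor-of-four normalization slip: with the paper's conventions the chain yields $4\int\sigma_2(P)\,dv\le 12\pi^2$, so the $12\pi^2$ in the statement tacitly uses the $A=2P$ convention of \cite{CGY03}. The paper's own displayed proof has the same missing factor.

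For the inequality itself your route and the paper's coincide: both combine Chern--Gauss--Bonnet with the signature formula, and your appeal to \eqref{Gursky theorem} for $\int\|W^+\|^2\ge 12\pi^2$ is numerically identical to the paper's use of $\int\|W^-\|^2\ge 0$ together with the signature identity. The equality case is where the approaches diverge. The paper passes to the modified Yamabe metric $g_m$, uses the estimate $\int|E_{g_m}|^2\le 6\int\|W^-_{g_m}\|^2$ from Lemma~\ref{initial metric} to conclude from $W^-\equiv 0$ that $g_m$ is self-dual Einstein, then checks equality in \eqref{Gursky theorem}, applies Obata to see $g_m$ is itself K\"ahler--Einstein, and finishes with Tian's uniformization of K\"ahler--Einstein surfaces satisfying $c_1^2=3c_2$. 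Your route is shorter: equality in Gursky's bound gives directly that $g$ is conformal to a positive K\"ahler--Einstein metric $g_{KE}$; conformal invariance of $W^-\equiv 0$ makes $g_{KE}$ self-dual Einstein with positive scalar curvature; and Hitchin's classification \cite{H74} then forces $(\mathbb{CP}^2,g_{FS})$ once $b_2^+=1$ excludes $S^4$. Both are valid. Yours avoids the modified Yamabe machinery and the K\"ahler--Einstein uniformization theorem, at the price of importing Hitchin's result; the paper's route stays internal to the tools already developed in the section and in \cite{Gur00}, which is presumably why it was chosen.
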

\begin{proof}
By our assumptions we have $\chi(M^4)=3$ and $\tau(M^4)=1$.  Then the Gauss-Bonnet-Chern and signature formulas read
\begin{equation}
  24\pi^2=\int_M||W_g||^2 \,dv_g+4\int_M\sigma_2(P_g) \,dv_g,
\end{equation}
and
\begin{equation}
  12\pi^2=\int_M||W^+_g||^2\,dv_g-\int_M||W^-_g||^2\,dv_g.
\end{equation}
Therefore,
\begin{align*}
\int_M\sigma_2(P) dv &= 24 pi^2 - \int \| W \|^2 dv \\
&\leq 24\pi^2-\int_M||W^+||^2dv  \\
&=12\pi^2-\int_M||W^-||^2dv\leq12\pi^2.
\end{align*}
If equality holds then $W^-\equiv0$ and by (\ref{g_m E}) of Lemma \ref{initial metric} it immediately follows that $E_{g_m}=0$. Hence, $(M^4,g_m)$ is self-dual Einstein with positive scalar curvature and $b_2^+(M^4)=1$.
It is easy to check that the equality in (\ref{Gursky theorem}) is achieved, and therefore $g_m$ is conformal to a K\"ahler-Einstein metric.
By Obata's theorem, $(M^4,g_m)$ must be (K\"ahler-)Einstein with positive scalar curvature and $b_2^+(M^4)=1$.

Now $(M^4,g_m)$ is a complex surface with a positive K\"ahler-Einstein metric. For complex surfaces, (see Page 81 of \cite{B87})
\begin{equation}\label{relation of topological invariants}
3c_2(M^4)-c_1(M^4)^2=\chi(M^4)-3\tau(M^4)=0.
\end{equation}
Then the uniformlization of K\"ahler-Einstein manifolds (e.g. Theorem 2.13 in \cite{T}) implies that the universal cover of $(M,g_m)$ is (up to scaling) isometric to $(\mathbb{CP}^2,g_{FS})$.
Hence, $(M,g_m)$ is conformally equivalent to $(\mathbb{CP}^2,g_{FS})$ since $\mathbb{CP}^2$ does not have nontrivial smooth quotient space.
\end{proof}

%\section{Proof of Theorem C}
\section{The Proofs of Theorems A and B}
%The proof of Theorem B is fairly different and more challenging than the proof of Theorem B. Without the Bach-flat condition, there is no convergence theory to apply directly.

Suppose $M^4$ is an oriented four-manifold with $b_2^{+}(M^4) > 0$ and $g$ is a metric on $M^4$ with $\beta(M^4,[g]) = 4(1 + \epsilon)$ for $\epsilon > 0$ small.  We want to show that if $\epsilon > 0$ is small
enough, then $M^4$ is diffeomorphic to $\mathbb{CP}^2$.  By Lemma \ref{initial metric}, the modified Yamabe metric $g_m \in [g]$ is close to a self-dual Einstein metric in an $L^2$-sense.  Using the Ricci flow, we
  want to `smooth' $g_m$ to obtain smallness of $W^{-}$ and $E$ in a {\em pointwise} sense.  We do this in two stages:  first, we show that for a small but uniform time, the Ricci flow applied to $g_m$ gives a metric
  for which $W^{-}$ and $E$ are small in an $L^p$-sense, for some $p > 2$.   Next, we appeal to a parabolic Moser iteration estimate of D. Yang \cite{Y} to conclude $L^\infty$-smallness.  The Bernstein-Bando-Shi estimates for the Ricci flow then imply bounds for $C^\infty$-norms of the curvature.   To complete the proof we apply a contradiction argument using a compactness result of Cheeger-Gromov-Taylor \cite{CGT}.

We begin with some definitions: on $(M^4,g)$, define

\begin{equation} \label{Gk}
G_k(g)=|E_g|^k+|R_g-\bar{R}_g|^k+||W^-_g||^k+|(F_g^+)_-|^k,
\end{equation}
where $(F_g^+)_-=\min(F_g^+,0)$. We shall suppress the subscript $g$ when there is no confusion.

\begin{lemma}\label{4.1}
Under the conditions of Theorem A, the modified Yamabe metric $g_m\in[g]$ satisfies $\int{G_2(g_m)dv_{g_m}}<c(\epsilon)$, where $c(\epsilon)\to0$ as $\epsilon\to0$.
\end{lemma}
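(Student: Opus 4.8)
The estimate we must prove is that each of the four terms in $\int G_2(g_m)\,dv_{g_m}$ is bounded by a quantity vanishing as $\epsilon\to 0$. The plan is to read off all four bounds directly from Lemma~\ref{initial metric}, after noting that the dominant quantity there, $\int\|W^-_{g_m}\|^2\,dv_{g_m}=\tfrac{6\epsilon}{2+\epsilon}\pi^2$, is itself $O(\epsilon)$. First, the $\|W^-_{g_m}\|^2$ term is exactly this quantity, hence controlled. Second, by Cauchy--Schwarz and unit volume, $\int\|W^-_{g_m}\|\,dv_{g_m}\le\big(\int\|W^-_{g_m}\|^2\,dv_{g_m}\big)^{1/2}$, so the traceless Ricci estimate (\ref{g_m E}), namely $\int|E_{g_m}|^2\,dv_{g_m}\le 6\int\|W^-_{g_m}\|\,dv_{g_m}$, gives $\int|E_{g_m}|^2\,dv_{g_m}\le 6\sqrt{\tfrac{6\epsilon}{2+\epsilon}}\,\pi=O(\sqrt\epsilon)$. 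Third, the scalar-curvature oscillation term is handled by (\ref{g_m R average}): $\int(R_{g_m}-\bar R_{g_m})^2\,dv_{g_m}\le 72\int\|W^-_{g_m}\|^2\,dv_{g_m}=O(\epsilon)$.

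The only term not literally appearing in Lemma~\ref{initial metric} is $\int|(F^+_{g_m})_-|^2\,dv_{g_m}$, where $(F^+_{g_m})_-=\min(F^+_{g_m},0)$. Here I would use that the modified Yamabe metric satisfies $F^+_{g_m}=R_{g_m}-2\sqrt6\,\|W^+_{g_m}\|\equiv-\mu_+\le 0$ pointwise (the Remark following Theorem~3.1), so $(F^+_{g_m})_-=-\mu_+$ is \emph{constant}. Therefore $\int|(F^+_{g_m})_-|^2\,dv_{g_m}=\mu_+^2$, and it remains to bound $\mu_+$. For this I would combine (\ref{g_m mu}), $\tfrac{1}{12}\mu_+ Y\le 3\int\|W^-_{g_m}\|^2\,dv_{g_m}$, with the Yamabe lower bound (\ref{g_m Yamabe}), $Y=Y(M^4,[g_m])\ge\tfrac{24\pi}{\sqrt{2+\epsilon}}$. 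This yields $\mu_+\le \tfrac{36}{Y}\int\|W^-_{g_m}\|^2\,dv_{g_m}\le \tfrac{36\sqrt{2+\epsilon}}{24\pi}\cdot\tfrac{6\epsilon}{2+\epsilon}\pi^2=O(\epsilon)$, hence $\mu_+^2=O(\epsilon^2)$.

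Putting these four estimates together, $\int G_2(g_m)\,dv_{g_m}\le c(\epsilon)$ with, say,
\[
c(\epsilon)=\frac{6\epsilon}{2+\epsilon}\pi^2+6\sqrt{\frac{6\epsilon}{2+\epsilon}}\,\pi+\frac{432\,\epsilon}{2+\epsilon}\pi^2+\Big(\frac{9\sqrt{2+\epsilon}\,\epsilon}{4\pi(2+\epsilon)}\pi^2\Big)^2,
\]
which is a continuous function of $\epsilon\in[0,1)$ with $c(0)=0$; in particular $c(\epsilon)\to 0$ as $\epsilon\to 0$, and for $\epsilon$ small the error is in fact $O(\sqrt\epsilon)$ (dominated by the $|E_{g_m}|^2$ term). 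This completes the proof. I do not anticipate a genuine obstacle here: the lemma is essentially a bookkeeping consequence of Lemma~\ref{initial metric}; the one point requiring care is recognizing that $(F^+_{g_m})_-\equiv-\mu_+$ is constant, so that its $L^2$ norm is $\mu_+$ and the bound follows from (\ref{g_m mu}) and (\ref{g_m Yamabe}) rather than from an oscillation-type estimate.
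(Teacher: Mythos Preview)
Your proof is correct and is precisely the explicit unpacking of the paper's one-line proof (``This is a direct consequence of Lemma~\ref{initial metric}''): you bound each of the four summands of $G_2(g_m)$ using (\ref{g_m W-}), (\ref{g_m E}), (\ref{g_m R average}), and, for the $(F^+)_-$ term, the observation that $F^+_{g_m}\equiv -\mu_+$ is constant together with (\ref{g_m mu}) and (\ref{g_m Yamabe}). One small remark: the right-hand side of (\ref{g_m E}) in the paper is almost certainly a typo for $6\int\|W^-_{g_m}\|^2\,dv_{g_m}$ (as its derivation shows), so your $|E|^2$ bound is actually $O(\epsilon)$ rather than $O(\sqrt\epsilon)$, but this only sharpens your conclusion.
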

\begin{proof}
This is a direct consequence of Lemma \ref{initial metric}.
\end{proof}

  Now recall some basic facts about the Ricci flow:
\begin{equation}\label{Ricci flow}
\left\{
\begin{array}{ll}
\frac{\partial{}}{\partial{t}}g=-2Ric(g) \\
g(0)=g_0
\end{array}
\right.
\end{equation}
The following short time-time existence result of Ricci flow has been established in  \cite{Ham82}.

\begin{proposition}
For arbitrary smooth metric $g_0$, there exists $T=T(g_0)$ such that (\ref{Ricci flow}) has a unique smooth solution for $t\in[0,T)$.
\end{proposition}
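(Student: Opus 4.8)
The plan is to recall the standard proof of Hamilton's short-time existence and uniqueness theorem \cite{Ham82}, in the streamlined form using DeTurck's trick. The essential difficulty is structural: the map $g \mapsto -2\,\mathrm{Ric}(g)$ is only \emph{weakly} parabolic, since in local coordinates its principal symbol has a nontrivial kernel reflecting the invariance of (\ref{Ricci flow}) under the diffeomorphism group of $M^4$. Hence one cannot apply the standard existence theory for quasilinear parabolic systems directly; the entire argument is organized around removing this gauge degeneracy.

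First I would introduce the Ricci--DeTurck modification. Fix the initial metric $g_0$ as a background, let $\bar\Gamma$ denote its Christoffel symbols, and consider the vector field $W = W(g)$ with components $W^k = g^{ij}\big(\Gamma^k_{ij}(g) - \bar\Gamma^k_{ij}\big)$. The modified flow
\begin{equation*}
\frac{\partial}{\partial t} g = -2\,\mathrm{Ric}(g) + \mathcal{L}_{W}\, g, \qquad g(0) = g_0,
\end{equation*}
has, after one computes its principal symbol, the form of a strictly parabolic quasilinear system for the components of $g$: the Lie derivative term exactly cancels the degeneracy. Standard parabolic theory (linearization together with a fixed-point or inverse-function-theorem argument in H\"older or Sobolev spaces, or the general theory for quasilinear parabolic systems) then produces a unique smooth solution $\tilde g(t)$ on a maximal interval $[0,T)$ with $T = T(g_0) > 0$.

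Next I would undo the modification. Given $\tilde g(t)$, solve for each $x \in M^4$ the ODE (first order and linear in $t$)
\begin{equation*}
\frac{\partial}{\partial t}\varphi_t(x) = - W\big(\tilde g(t)\big)\big(\varphi_t(x)\big), \qquad \varphi_0 = \mathrm{id},
\end{equation*}
whose solution is a smooth one-parameter family of diffeomorphisms $\varphi_t$ of $M^4$, defined on a uniform time interval by compactness of $M^4$. Then $g(t) := \varphi_t^{*}\,\tilde g(t)$ solves the original Ricci flow (\ref{Ricci flow}) with $g(0) = g_0$: this is the standard computation showing that pulling back by $\varphi_t$ converts the $\mathcal{L}_W g$ term precisely into the $t$-derivative contributed by $\partial_t \varphi_t$. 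This establishes existence.

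Finally, for uniqueness, suppose $g_1(t)$ and $g_2(t)$ both solve (\ref{Ricci flow}) on $[0,T)$ with $g_i(0) = g_0$. Running the harmonic map heat flow with target $(M^4, g_0)$ and domain metric $g_i(t)$ starting from the identity produces diffeomorphisms $\psi^i_t$ such that the pullbacks $(\psi^i_t)_{*}\, g_i(t)$ both solve the Ricci--DeTurck flow above with the same initial data $g_0$; the uniqueness already obtained for that strictly parabolic system forces $(\psi^1_t)_{*} g_1 = (\psi^2_t)_{*} g_2$, and then tracing back through the uniquely solvable ODEs governing $\psi^i_t$ yields $g_1 \equiv g_2$. (One may instead cite Hamilton's original Nash--Moser implicit function theorem argument, or DeTurck's direct uniqueness proof.) As indicated, the only genuine obstacle is the gauge degeneracy of Ricci flow; once DeTurck's trick removes it, the remainder is textbook parabolic PDE and ODE theory on the compact manifold $M^4$.
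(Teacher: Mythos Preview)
Your proof sketch via DeTurck's trick is correct and is the standard modern argument for short-time existence and uniqueness of the Ricci flow. The paper, however, does not prove this proposition at all: it is stated as a known result and attributed to Hamilton \cite{Ham82} without further comment. So there is no ``paper's own proof'' to compare against; you have supplied a complete (and more contemporary) argument where the authors simply invoke the literature.
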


\begin{remark}
In general, the time interval $[0,T)$ depends on the initial metric $g_0$.
\end{remark}

Along the Ricci flow, define $G_k(t)=G_k(g(t))$.
The following estimate is of fundamental importance for our argument.

\begin{proposition}\label{evolution of G2}
Suppose we have a solution of the Ricci flow whose initial metric satisfies
\begin{equation}\label{initial smallness}
\int_MG_2(0)dv_{g(0)}=\frac{1}{2}\epsilon_0
\end{equation}
for some $\epsilon_0$ is sufficiently small.  Let
\[T=\inf\{{t}:\int_MG_2(t)dv_{g(t)}=\epsilon_0\}.\]
Assume in addition for $0\leq{t}\leq{T}$
\begin{equation}\label{Yamabe}
  Y(t) = Y(M^4,[g(t)]) \geq{b}>0,
\end{equation}
and
\begin{equation}\label{average scalar}
  0<\bar{R}(t)\leq{a}.
\end{equation}
Then for $0\leq{t}\leq{T}$, we have
\begin{equation}\label{evolution G_2}
  \frac{d}{dt}\int_MG_2(t)dv_{g(t)}\leq{\widetilde{a}}\int_MG_2(t)dv_{g(t)}-\widetilde{b}\left(\int_M{G_4(t)}dv_{g(t)}\right)^{1/2},
\end{equation}
where $\widetilde{a}$ and $\widetilde{b}$ are uniform positive constants independent of $\epsilon_0$.  Moreover, there exists $T_0$, which is independent of $\epsilon_0$ such that $T\geq{T_0}$, and we may choose $\widetilde{a}=\frac{4}{3}a$ and $\widetilde{b}=\frac{1}{12}b$.
\end{proposition}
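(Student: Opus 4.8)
The plan is to derive, for each of the four quantities $|E|^2$, $(R-\bar R)^2$, $\|W^-\|^2$, $(F^+)_-^2$ comprising $G_2$, its evolution equation (a differential inequality, in the barrier sense, for the non-smooth ones) under the Ricci flow, integrate over $M^4$, and add the four resulting inequalities. Recall $\partial_t\,dv=-R\,dv$, so that $\frac{d}{dt}\int f\,dv=\int\partial_t f\,dv-\int fR\,dv$. Using Hamilton's dimension-four curvature evolution equations \cite{Ham82}, each term $\frac{d}{dt}\int(\cdot)\,dv$ splits into: a good negative piece $-2\int|\nabla(\cdot)^{1/2}|^2\,dv$, coming from the $-2|\nabla(\mathrm{tensor})|^2$ in the evolution together with the Kato inequality; a reaction term trilinear in the full curvature; lower-order cross terms; and $-\int R(\cdot)\,dv$ from the volume form, whose leading piece $-\bar R\int(\cdot)\,dv$ is favorable. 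In the $(R-\bar R)^2$-equation the $\partial_t\bar R$ contribution drops, since $\int(R-\bar R)\,dv=0$. For the Lipschitz quantities one works with the smooth $\|W^-\|^2=\langle W^-,W^-\rangle$ and, on $\{F^+<0\}$, with $(F^+)^2=(R-2\sqrt6\|W^+\|)^2$, a standard approximation/barrier argument ensuring that any non-smoothness enters with the favorable sign.

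The heart of the matter is the reaction and cross terms. Up to bounded universal factors, the full curvature splits into a ``bounded'' part — the average scalar curvature $\bar R(t)\le a$ — plus the parts that are $L^2$-small on $[0,T]$, namely $|E|$, $|R-\bar R|$, $\|W^-\|$, $|(F^+)_-|$. The norm $\|W^+\|$, which is \emph{not} a priori small, is eliminated through the identity $2\sqrt6\|W^+\|=R-F^+\le R+|(F^+)_-|$ (valid up to the $O(\mu_+)$ amount by which $R$ can become negative along the flow, bounded by the maximum principle). Every trilinear reaction term then breaks, up to constants, into terms with at least two $L^2$-small factors, which Cauchy--Schwarz bounds by $\epsilon_0^{1/2}\big(\int G_4\,dv\big)^{1/2}$ and which are therefore absorbable for $\epsilon_0$ small, and terms of the form $\bar R\cdot(\cdot)$, bounded by a constant multiple of $a\int G_2\,dv$. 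The delicate case is the $(F^+)_-^2$-equation: the $\tfrac R6\|W^+\|$ term in $\partial_t\|W^+\|$ produces a reaction $+\tfrac13 R^2|(F^+)_-|$, which is only $L^1$ in $(F^+)_-$ and hence \emph{not} directly absorbable — but the $2|Ric|^2$ term in $\partial_t R$ enters the $(F^+)^2$-equation as $4(R-w)|Ric|^2=-4|(F^+)_-||E|^2-R^2|(F^+)_-|$ on $\{F^+<0\}$ (here $w=2\sqrt6\|W^+\|$), and $-1+\tfrac13<0$, so the net scalar reaction is favorable; the remaining $\|W^+\|^2$-self-interaction and off-diagonal contributions are controlled against it once the universal constants are tracked.

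To finish, the good gradient terms are converted to the claimed negative multiple of $\big(\int G_4\,dv\big)^{1/2}$. The hypothesis $Y(t)\ge b$ supplies $Y\|u\|_{L^4}^2\le\int(6|\nabla u|^2+Ru^2)\,dv$, i.e. $\int|\nabla u|^2\,dv\ge\tfrac16\big(Y\|u\|_{L^4}^2-\int Ru^2\,dv\big)$; summing over $u=|E|,|R-\bar R|,\|W^-\|,|(F^+)_-|$, using $\sum_u\|u\|_{L^4}^2\ge\big(\int G_4\,dv\big)^{1/2}$ and $\int Ru^2\,dv\le\bar R\int u^2\,dv+\epsilon_0^{1/2}\big(\int G_4\,dv\big)^{1/2}$, the sum $-2\sum_u\int|\nabla u|^2\,dv$ is at most $-\tfrac13 b\big(\int G_4\,dv\big)^{1/2}+\tfrac13 a\int G_2\,dv+(\text{absorbable})$. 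Collecting all contributions — at most $a\int G_2\,dv$ from the reaction and volume terms, and $\tfrac13 a\int G_2\,dv-\tfrac13 b\big(\int G_4\,dv\big)^{1/2}$ from the Sobolev step — and choosing $\epsilon_0$ small (in terms of $a,b$) so that the accumulated absorbable $\big(\int G_4\big)^{1/2}$-terms do not exceed $\tfrac14 b\big(\int G_4\big)^{1/2}$, we obtain \eqref{evolution G_2} with $\tilde a=\tfrac43 a$ and $\tilde b=\tfrac13 b-\tfrac14 b=\tfrac1{12}b$. Finally, discarding the negative term and using $\int G_2\,dv\le\epsilon_0$ on $[0,T]$ gives $\int_M G_2(t)\,dv\le\tfrac12\epsilon_0+\tfrac43 a\,\epsilon_0 t$, which stays $<\epsilon_0$ for $t<\tfrac3{8a}$; hence $T\ge T_0:=\tfrac3{8a}$, independent of $\epsilon_0$.

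The main obstacle is the bookkeeping: one must write out the cubic reaction terms of all four evolution (in)equalities with their exact scalar-curvature coefficients — read off from Hamilton's curvature evolution and the block decomposition of the curvature operator in dimension four — and verify that, after the substitution $2\sqrt6\|W^+\|=R-F^+$, the Cauchy--Schwarz absorptions, and the Sobolev step, the surviving ``bounded'' coefficients total at most $\tfrac43 a$ and the negative $\big(\int G_4\big)^{1/2}$-coefficient is at least $\tfrac1{12}b$; the sign balance in the $(F^+)_-^2$-equation is the most fragile part. Secondary points requiring care are the justification of the barrier forms of the evolution inequalities for $\|W^-\|$ and $(F^+)_-$, and recording the elementary a priori facts used along the way (e.g. $\mathrm{vol}(g(t))\le\mathrm{vol}(g_m)=1$ since $\bar R>0$ forces the volume to be non-increasing, and $R\ge-\mu_+$ under the flow).
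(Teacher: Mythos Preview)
Your approach is essentially the same as the paper's: compute the integral evolution (in)equalities for each of the four summands of $G_2$, pair the gradient term with a $-\tfrac{1}{3}R(\cdot)$ piece and apply the conformal Sobolev inequality $Y\|u\|_{L^4}^2\le\int(6|\nabla u|^2+Ru^2)\,dv$ to produce $-\tfrac{Y}{3}\big(\int(\cdot)^2\big)^{1/2}$, convert every $\|W^+\|$-reaction via $2\sqrt6\|W^+\|=R-F^+\le(R-\bar R)+\bar R+|(F^+)_-|$, and absorb the $O(\epsilon_0^{1/2})$ cross terms into half of the Sobolev gain to arrive at $\tilde a=\tfrac43 a$, $\tilde b=\tfrac1{12}b$; the paper then uses Gronwall to get $T_0=\tfrac{3\log 2}{4a}$ (your linear bound $T_0=\tfrac{3}{8a}$ is also fine). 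One simplification worth noting: your treatment of the $(F^+)_-$-equation is more delicate than necessary --- in the paper's formulation of $\partial_t\|W^+\|$ there is no $\tfrac{R}{6}\|W^+\|$ term, and the $|E|^2$-contributions from $\partial_t R$ and $-2\sqrt6\,\partial_t\|W^+\|$ cancel exactly, leaving the clean barrier $\partial_t(F^+)_-\ge\Delta(F^+)_- -((F^+)_-)^2+2R(F^+)_-$, so no $R^2|(F^+)_-|$-type term ever arises and that piece goes through just like the other three.
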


\begin{proof}  % THIS NEEDS TO BE CAREFULLY RE-WRITTEN.  IT WILL BE VERY DIFFICULT FOR A REFEREE TO READ:  THE CONSTANTS $\lambda_i, b_i$ ARE NOT DEFINED, AND IT IS NOT CLEAR
%WHAT IS BEING ESTIMATED.

%************************************************

The proof is based on the evolution of the curvature under the Ricci flow in four dimensions, along with several algebraic inequalities.   We begin
by summarizing the evolution formulas we will need, most of which can be found in \cite{CK}:

\begin{lemma}
Under (\ref{Ricci flow}) on Riemannnian four-manifolds,
\begin{equation}\label{volume element}
\frac{\partial}{\partial{t}}dv=-R{dv},
\end{equation}
\begin{equation}\label{E^2}
  \frac{\partial}{\partial{t}}|E|^2=\Delta|E|^2-2|\nabla{E}|^2+4WEE-4trE^3+\frac{2}{3}R|E|^2,
\end{equation}
\begin{equation}\label{R^2}
  \frac{\partial}{\partial{t}}(R^2)=\Delta{(R^2)}-2|\nabla{R}|^2+4R|E|^2+R^3,
\end{equation}
\begin{equation}\label{W+-^2}
\frac{\partial}{\partial{t}}||W^{\pm}||^2={\Delta||W^{\pm}||^2-2||\nabla{W^{\pm}}||^2+36\det{W^{\pm}}+W^{\pm}EE,}
\end{equation}
\begin{equation}\label{W+- inequality}
\frac{\partial}{\partial{t}}||W^{\pm}||\leq{\Delta||W^{\pm}||+\sqrt6||W^{\pm}||^2+\frac{\sqrt{6}}{6}|E|^2,}
\end{equation}
\begin{equation}\label{F inequality}
\frac{\partial}{\partial{t}}((F^+)_-)\geq\Delta{((F^+)_-)}-((F^+)_-)^2+2R(F^+)_-
\end{equation}
where
\[W^{\pm}EE:=W^{\pm}_{ijkl}E_{ik}E_{jl},\,\,\, F^+_g:=R_g-2\sqrt{6}||W^+||,\,\,\,(F^+)_{-}:=\min\{F_g^+,0\}.\]
\end{lemma}

\begin{remark}  For the evolution formulas of $W^{\pm}$ we rely on unpublished notes of D. Knopf \cite{Knopf}.  \end{remark}

As a corollary of the formulas above we have in four dimensions:

\begin{corollary}\label{integral}
Under (\ref{Ricci flow}),
\begin{equation}\label{volume}
\frac{d}{d{t}}\int{dv}=-\int{R}{dv},
\end{equation}
\begin{equation}\label{L2 E}
\frac{d}{dt}\int|E|^2dv=\int\left(-2|\nabla{E}|^2+4WEE-4trE^3-\frac{1}{3}R|E|^2\right)dv,
\end{equation}
\begin{equation}\label{L2 R-barR}
\frac{d}{dt}\int(R-\bar{R})^2dv=\int\left(-2|\nabla{R}|^2+4(R-\bar{R})|E|^2+\bar{R}(R-\bar{R})^2\right)dv,
\end{equation}
\begin{multline}\label{L2 F-}
\frac{d}{dt}\int|(F^+)_{-}|^2dv\leq-2\int\left(|\nabla{(F^+)_{-}}|^2+\frac{R}{6}|(F^+)_{-}|^2\right)dv\\
  -\int((F^+)_{-})^3dv+\int\left(\frac{4}{3}\bar{R}|(F^+)_{-}|^2+\frac{4}{3}(R-\bar{R})|(F^+)_{-}|^2\right)dv,
\end{multline}
\begin{multline}\label{L2 W-}
\frac{d}{dt}\int||W^{-}||^2dv\leq-\int\left(2|\nabla||{W^-}|||^2+R||W^-||^2\right)dv\\
+\int\left(2\sqrt{6}||W^-||^3+\frac{\sqrt6}{3}||W^-|||E|^2\right)dv,
\end{multline}
where
\[\bar{R}=\int{R}dv\Big/\int{dv},\,\,\,F^+_g=R_g-2\sqrt{6}||W^+||,\,\,\,(F^+)_{-}=\min\{F_g^+,0\}.\]
\end{corollary}

For the proof of Proposition \ref{evolution of G2} we will also need some algebraic inequalities.  The first appears in (\cite{CGY03}, Lemma 4.3), and is based on (\cite{Mar98}, Lemma 6):

\begin{lemma} \label{bew}
\begin{equation}\label{WEE ||W|| and |E|}
  WEE\leq\frac{\sqrt{6}}{3}\left(||W^+||+||W^-||\right)|E|^2
\end{equation}
\end{lemma}

\begin{proof}  Recall the well-known decomposition of Singer-Thorpe:
\begin{equation}
Riem=
\begin{pmatrix}
W^++\frac{R}{12}Id & B \\
                       B^{*}   &  W^-+\frac{R}{12}Id
\end{pmatrix}
\end{equation}
Note the compositions satisfy
\[BB^*:\Lambda^2_+\to\Lambda_+^2,\quad B^*B:\Lambda_-^2\to\Lambda_-^2.\]

Fix a point $P\in{M^4}$, and let $\lambda_1^{\pm}\leq\lambda_2^{\pm}\leq\lambda_{3}^{\pm}$ denote the eigenvalues of $W^{\pm}$, where ${W^{\pm}}$ are interpreted as endomorphisms of $\Lambda^2_{\pm}$. Also denote the eigenvalues of $BB^*:\Lambda^2_+\to\Lambda_+^2$ by
$b_1^2\leq{b_2^2}\leq{b_3^2}$, where $0\leq{b_1}\leq{b_2}\leq{b_3}$. From Lemma 4.3  of \cite{CGY03}, we have

\begin{equation}\label{WEE}
  WEE\leq4\left(\sum_{i=1}^3\lambda_i^+b_i^2+\sum_{i=1}^3\lambda_i^-b_i^2\right)
\end{equation}
Recall from Lemma 4.2 of \cite{CGY03} that $|E|^2=4\sum_{i=1}^3b_i^2$. For a trace-free $3\times3$ matrix $A$, we have the sharp inequality:
\begin{equation}\label{sharp33}
|A(X,X)|\leq{\frac{\sqrt{6}}{3}||A|||X|^2}.
\end{equation}
Apply (\ref{sharp33}) to $A=diag(\lambda_1^{\pm},\lambda_2^{\pm},\lambda_3^{\pm})$ and $X=(b_1,b_2,b_3)$. We derive
\begin{equation}\label{sharp}
4\left(\sum_{i=1}^3\lambda_i^+b_i^2+\sum_{i=1}^3\lambda_i^-b_i^2\right)\leq\frac{\sqrt{6}}{3}\left(||W^+||+||W^-||\right)|E|^2.
\end{equation}
Combining (\ref{WEE}) and (\ref{sharp}), we derive the desired inequality.
\end{proof}

Now we turn to the proof of Proposition \ref{evolution of G2}. From the definition of $G_k$, we have
\begin{equation}
\frac{d}{dt}\int{G_2}dv= \frac{d}{dt}\int|E|^{2}dv+\frac{d}{dt}\int|R-\bar{R}|^{2}dv+\frac{d}{dt}\int||W^-||^{2}dv+\frac{d}{dt}\int|(F^+)_-|^{2}dv.
\end{equation}

Now estimate each term of the right hand side from formulas in Corollary \ref{integral}.

\begin{equation} \label{E2E4}
\begin{split}
\frac{d}{dt}\int|E|^2dv &  \leq\int\left(-2|\nabla{E}|^2-\frac{1}{3}R|E|^2-4trE^3+\frac{4\sqrt{6}}{3}(||W^+||+||W^-||)|E|^2\right)dv\\
                                       &  \leq-\frac{Y}{3}\left(\int|E|^4dv\right)^{1/2}+4\left(\int|E|^2dv\right)^{1/2}\left(\int|E|^4dv\right)^{1/2} \\
                                       &  +\frac{4\sqrt{6}}{3}\left(\int||W^-||^2dv\right)^{1/2}\left(\int|E|^4dv\right)^{1/2}\\
                                       & +\frac{2}{3}\left(\int|(F^+)_-|^2dv\right)^{1/2}\left(\int|E|^4dv\right)^{1/2}+\frac{2}{3}\int(R-\bar{R})|E|^2dv \\
                                       & +\frac{2}{3}\int\bar{R}|E|^2dv\\
                                       &  \leq{-\frac{Y}{6}}\left(\int|E|^4dv\right)^{1/2}+\frac{2}{3}\bar{R}\int|E|^2dv.
\end{split}
\end{equation}

The first inequality follows from Lemma \ref{bew}.  The second inequality follows from Cauchy-Schwartz inequality and the conformally invariant Sobolev inequality:
\begin{align*}
Y \Big( \int \phi^4 dv \Big)^{1/2} \leq \int \big( |\nabla \phi|^2 + \frac{1}{6}R \phi^2 \big) dv.
\end{align*}
The third inequality follows from the smallness assumption of $\epsilon_0$.  Next, we esimate
\begin{equation}\label{R2R4}
\begin{split}
\frac{d}{dt}\int(R-\bar{R})^2dv  & = \int\left(-2|\nabla(R-\bar{R})|^2-\frac{1}{3}R(R-\bar{R})^2\right)dv+\frac{1}{3}\int(R-\bar{R})^3dv\\
                                                      & +\frac{4}{3}\int\bar{R}(R-\bar{R})^2dv+4\int(R-\bar{R})|E|^2dv \\
                                                      & \leq{-\frac{Y}{3}\left(\int(R-\bar{R})^4dv\right)^{1/2}}+\frac{1}{3}\left(\int(R-\bar{R})^2dv\right)^{1/2}\left(\int(R-\bar{R})^4dv\right)^{1/2}\\
                                                      & +\frac{4}{3}\bar{R}\int(R-\bar{R})^2dv+4\left(\int(R-\bar{R})^2dv\right)^{1/2}\left(\int|E|^4dv\right)^{1/2} \\
                                                      & \leq{-\frac{Y}{6}\left(\int(R-\bar{R})^4dv\right)^{1/2}}+\frac{4}{3}\bar{R}\int(R-\bar{R})^2dv\\
                                                      & +4\left(\int(R-\bar{R})^2dv\right)^{1/2}\left(\int|E|^4dv\right)^{1/2}.
\end{split}
\end{equation}

The first inequality is from Sobolev inequality and Cauchy-Schwartz inequality and the second inequality is from the smallness assumption of $\epsilon_0$.

\begin{equation}\label{F2F4}
\begin{split}
\frac{d}{dt}\int|(F^+)_-|^2dv &  \leq-\frac{Y}{3}\left(\int|(F^+)_-|^4dv\right)^{1/2}+\left(\int|(F^+)_-|^2dv\right)^{1/2}\left(\int|(F^+)_-|^4dv\right)^{1/2} \\
                                                   & +\frac{4}{3}\bar{R}\int|(F^+)_-|^2dv +\frac{4}{3}\left(\int(R-\bar{R})^2dv\right)^{1/2}\left(\int|(F^+)_-|^4dv\right)^{1/2} \\
                                                   &  \leq-\frac{Y}{6}\left(\int|(F^+)_-|^4dv\right)^{1/2}+\frac{4}{3}\bar{R}\int|(F^+)_-|^2dv
\end{split}
\end{equation}

The first inequality is from Sobolev inequality and Cauchy-Schwartz inequality and the second inequality is from the smallness assumption of $\epsilon_0$.

\begin{equation}\label{W2W4}
\begin{split}
\frac{d}{dt}\int||W^-||^2dv &   \leq -\frac{Y}{3}\left(||W^-||^4\right)^{1/2}+\frac{2}{3}\left(\int(R-\bar{R})^2dv\right)^{1/2}\left(\int||W^-||^4dv\right)^{1/2}\\
                                               &   +2\sqrt{6}\left(\int||W^-||^2dv\right)^{1/2}\left(\int||W^-||^4dv\right)^{1/2}\\
                                               &   +\frac{\sqrt{6}}{3}\left(\int||W^-||^2dv\right)^{1/2}\left(\int|E|^4dv\right)^{1/2} \\
                                               &   \leq -\frac{Y}{6}\left(||W^-||^4\right)^{1/2}+\frac{\sqrt{6}}{3}\left(\int||W^-||^2dv\right)^{1/2}\left(\int|E|^4dv\right)^{1/2}
\end{split}
\end{equation}

The first inequality is from Sobolev inequality and Cauchy-Schwartz inequality and the second inequality is from the smallness assumption of $\epsilon_0$.

With (\ref{E2E4})(\ref{R2R4})(\ref{F2F4})(\ref{W2W4}), it is now easy to see from the smallness assumption of $\epsilon_0$

\begin{equation}
\frac{d}{dt}\int_MG_2dv\leq\frac{4}{3}a\int_MG_2dv-\frac{1}{12}b\left(\int{G_4}dv\right)^{1/2}.
\end{equation}

Take $\widetilde{a}=\frac{4}{3}a$ and $\widetilde{b}=\frac{1}{12}b$. Clearly, we have proved the desired inequality. Note that the differential inequality
\[\frac{d}{dt}\int_MG_2dv\leq\frac{4}{3}a\int_MG_2dv\]
implies
\[T\geq{T}_0=\frac{3\log2}{4}a.\]
\end{proof}

\begin{remark}
The importance of this lemma is that $T_0$ does \emph{not} depend on $\epsilon_0$, which implies that we may evolve the Ricci flow on a uniform time interval once we derive uniform bounds for curvatures.
\end{remark}

With Lemma \ref{4.1}, it is easy to see that we may choose $\epsilon$ in Theorem A sufficiently small so that (\ref{initial smallness}) is satisfied. To apply Proposition \ref{evolution of G2} and Proposition \ref{evolution of G3}, we also need establish (\ref{Yamabe}) and (\ref{average scalar}).
We now establish these inequalities and prove the following proposition.

\begin{proposition}\label{choice of initial metric}
Suppose the initial metric of Ricci flow is chosen as the modified Yamabe metric for sufficiently small $\epsilon$ in Theorem A. Then there exists $\widetilde{T}$ which does not depend on $\epsilon$ such that for $0\leq{t}\leq{\widetilde{T}}$ all conditions of Proposition \ref{evolution of G2} are satisfied.
\end{proposition}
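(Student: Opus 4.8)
The plan is to run a continuity (bootstrap) argument on the Ricci flow $g(t)$ with $g(0)=g_m$. First I would fix the constants of Proposition~\ref{evolution of G2}: a small $\epsilon_0>0$ to be specified, $b=12\pi$, and $a$ larger than the uniform upper bound for $\bar R(0)$ found below; then I would choose $\epsilon$ in Theorem~A small enough that Lemma~\ref{4.1} gives $\int_M G_2(g_m)\,dv_{g_m}\le\tfrac12\epsilon_0$, so (\ref{initial smallness}) holds. Define $\widetilde T$ to be the supremum of times $\tau$ such that on $[0,\tau]$ one has $\int_M G_2(t)\,dv_{g(t)}\le\epsilon_0$, $Y(t)\ge b$, and $0<\bar R(t)\le a$. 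By Lemma~\ref{initial metric} (estimates (\ref{g_m Yamabe}), (\ref{Gap W+}), (\ref{g_m W+})) we have $Y(0)\ge 24\pi/\sqrt{2+\epsilon}>b$ and $0<\bar R(0)<a$, so $\widetilde T>0$; the goal is to bound $\widetilde T$ below independently of $\epsilon$. On $[0,\widetilde T]$ all three conditions hold, so Proposition~\ref{evolution of G2} applies and, by itself, already prevents the $G_2$–condition from becoming binding before $T_0=\tfrac{3\log2}{4}a$. It remains to show the conditions on $Y$ and on $\bar R$ cannot fail before a uniform time either; I would assume without loss that $\widetilde T\le 1$ (otherwise there is nothing to prove).

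Next I would treat the bounds on $\bar R(t)$, which I expect to be routine. From the Ricci–flow evolution of the volume and of $\int R\,dv$, together with $|Ric|^2=|E|^2+\tfrac14 R^2$ in dimension four, one gets $\tfrac{d}{dt}\bar R=2\,\overline{|E|^2}-\tfrac12\,\overline{(R-\bar R)^2}+\tfrac12\bar R^2$, where the bars denote averages over $(M,g(t))$. On $[0,\widetilde T]$ we have $\int|E|^2\,dv,\ \int(R-\bar R)^2\,dv\le\int G_2(t)\,dv\le\epsilon_0$ and, since $0<\bar R\le a$ there, the unit-volume normalization gives the squeeze $e^{-at}\le\mathrm{vol}(g(t))\le 1$; hence on $\{t\le 1\}$ one obtains a Riccati-type pair $-C\epsilon_0\le\tfrac{d}{dt}\bar R\le C\epsilon_0+\tfrac12\bar R^2$ with $C$ uniform. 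Integrating shows $\bar R(t)$ cannot reach $0$ or $a$ before a uniformly positive time $T_1$, provided $\bar R(0)$ lies between two uniform positive constants. That pinching is exactly (\ref{g_m Yamabe}) ($\bar R(0)=\int R_{g_m}\,dv_{g_m}\ge Y(M,[g_m])\ge 24\pi/\sqrt3$) together with Cauchy--Schwarz, (\ref{Gap W+}) and (\ref{g_m W+})--(\ref{g_m W-}), which give $\bar R(0)\le\big(24\int\|W^+_{g_m}\|^2\,dv_{g_m}\big)^{1/2}\le\big(24(12+\tfrac{6\epsilon}{2+\epsilon})\pi^2\big)^{1/2}$ and thereby fix the choice of $a$.

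The heart of the matter is the bound $Y(t)\ge b$, and this is where I expect the real work to lie. I would first observe that the modified Yamabe metric satisfies $R_{g_m}=2\sqrt6\,\|W^+_{g_m}\|-\mu_+\ge-\mu_+$, with $0\le\mu_+\le\tfrac{36}{Y(M,[g_m])}\cdot 3\int\|W^-_{g_m}\|^2\,dv_{g_m}\to0$ as $\epsilon\to0$ by (\ref{g_m mu}) and (\ref{g_m Yamabe}). Since $\partial_t R=\Delta R+2|Ric|^2\ge\Delta R+\tfrac12 R^2$, the maximum principle preserves this: $R_{g(t)}\ge-\mu_+$ for all $t$. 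Because $\chi(M)=3$ and $\tau(M)=1$ are topological, the Chern--Gauss--Bonnet and signature formulas hold for every $g(t)$, and with $\int\|W^-_{g(t)}\|^2\,dv\le\int G_2(t)\,dv\le\epsilon_0$ they give $4\int\sigma_2(P_{g(t)})\,dv=24\pi^2-\int\|W_{g(t)}\|^2\,dv=12\pi^2-2\int\|W^-_{g(t)}\|^2\,dv\ge 12\pi^2-2\epsilon_0$. On $[0,\widetilde T)$ we have $Y(t)>0$, so the conformally invariant estimate (\ref{S2Y}), $\int\sigma_2(P_{g(t)})\,dv\le\tfrac1{96}Y(t)^2$, upgrades this to $Y(t)^2\ge 96\int\sigma_2(P_{g(t)})\,dv\ge 288\pi^2-48\epsilon_0>b^2$ once $\epsilon_0$ is small; by continuity of $t\mapsto Y(M,[g(t)])$ along the smooth flow, $Y(\widetilde T)>b$ as well, so the condition $Y\ge b$ is never the one that fails. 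The delicate point — the main obstacle — is to make this genuinely rigorous: one needs to know that $Y(M,[g(t)])$ stays positive on a uniform interval (so that (\ref{S2Y}) may be invoked) and varies continuously. The clean way to secure this is a uniform Sobolev/Yamabe inequality along the Ricci flow on $[0,\widetilde T]$, derived from the controlled initial Sobolev constant (from $Y(M,[g_m])\ge 24\pi/\sqrt3$ and unit volume), the preserved bound $R_{g(t)}\ge-\mu_+$, and the uniform control on $\bar R$ and volume, in the spirit of the parabolic estimates of D.~Yang~\cite{Y}.

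Finally I would assemble the pieces: if $\widetilde T$ were strictly smaller than the maximal existence time of the flow started at $g_m$, then at $t=\widetilde T$ none of the three conditions could fail (by the three paragraphs above), a contradiction; hence $\widetilde T$ is bounded below by the minimum of the uniform times $T_0$, $T_1$ found above. The flow does persist up to that time once uniform curvature bounds are available — these come from feeding the $L^2$–smallness on $[0,\widetilde T)$ into the parabolic Moser iteration of \cite{Y} and then the Bernstein--Bando--Shi estimates — so one may take $\widetilde T=T_\ast$ for a constant $T_\ast>0$ independent of $\epsilon$, which is the assertion of Proposition~\ref{choice of initial metric}.
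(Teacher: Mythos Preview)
Your proposal is correct and follows essentially the same route as the paper: both arguments use the Chern--Gauss--Bonnet and signature formulas (available since $\chi=3$, $\tau=1$ by Lemma~\ref{Lemma25}) to show $\int\sigma_2(P_{g(t)})\,dv$ stays close to $3\pi^2$ on the bootstrap interval, and then invoke (\ref{S2Y}) together with the positivity of the initial Yamabe invariant and continuity to obtain the uniform lower bound $Y(t)\ge b$. The only minor difference is in how $\bar R(t)$ is controlled---the paper first bounds $\int R^2\,dv$ (from the $\sigma_2$--identity and $\int|E|^2\le\epsilon_0$) and uses it in the volume evolution $|\tfrac{d}{dt}\sqrt{\mathrm{vol}}|\le C$ to pin down $\mathrm{vol}(t)$ and hence $\bar R(t)$, whereas you integrate the ODE $\tfrac{d}{dt}\bar R=2\,\overline{|E|^2}-\tfrac12\,\overline{(R-\bar R)^2}+\tfrac12\bar R^2$ directly---but these are equivalent short computations.
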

\begin{proof}

It is clear from Lemma \ref{4.1} that if we choose sufficiently small $\epsilon$ in Theorem A, we can establish (\ref{initial smallness}) for arbitrary small $\epsilon_0$. On $0\leq{t}\leq{T}$, (\ref{initial smallness}) implies that
\begin{equation}\label{smallness}
\int|E|^2dv\leq\frac{1}{2}\epsilon_0,\,\,\, \int||W^-||^2dv\leq\frac{1}{2}\epsilon_0,\,\,\,\int(R-\bar{R})^2dv\leq\frac{1}{2}\epsilon_0.
\end{equation}
Recall $b_1(M)=0$, $b_2^-(M)=0$, and $b_2^+(M)=1$. From signature and Chern-Gauss-Bonnet formula, we obtain
\begin{equation}
\int||W^+||^2dv=12\pi^2+\int||W^-||^2dv\leq12\pi^2+\frac{1}{2}\epsilon_0
\end{equation}
and thereby
\begin{equation}\label{sigma-2}
12\pi^2\geq4\int\sigma_2(P)dv=24\pi^2-\int||W||^2dv\geq{12\pi^2-\epsilon_0}.
\end{equation}
Now with the same argument in Lemma \ref{L2pinchLemma}, we can derive
\begin{equation}
\frac{1}{96}Y(t)^2\geq\int\sigma_2(P_{g(t)})dv_{g(t)}\geq2\pi^2
\end{equation}
if we choose sufficiently small $\epsilon_0$. Since the initial metric is of positive Yamabe type and the square of Yamabe constant has a strictly positive lower bound, we have established (\ref{Yamabe}).

Note that (\ref{smallness}) and (\ref{sigma-2}) imply that for some $C>0$
\begin{equation}
\frac{1}{C^2}\leq\int\bar{R}^2_{g(t)}dv_{g(t)}=\bar{R}^2_{g(t)}vol(M,g(t))\leq{C^2},\,\,\, \frac{1}{C^2}\leq\int{R}^2_{g(t)}dv_{g(t)}\leq{C^2}..
\end{equation}
To establish (\ref{average scalar}), it now suffices to derive a uniform lower bound for the volume since $\int{\bar{R}^2}dv$.
\begin{lemma}\label{volume}
Under conditions of Proposition \ref{choice of initial metric} with $vol(M,g(0))=1$, along Ricci flow, there exists constant $T^{'}>0$ such that for $0\leq{t}\leq{T^{'}}\leq{T}$
\begin{equation}\label{volume estimate}
\frac{9}{4}\geq{vol}(M,g(t))\geq\frac{1}{4}.
\end{equation}
In addition, there exists $T_1>0$  which does not depend on $\epsilon_0$ such that $T^{'}>T_1$.
\end{lemma}
\begin{proof}
Recall the evolution equation for volume under the Ricci flow:
\begin{equation}
\frac{d}{d{t}}\int{dv}=-\int{R}{dv}.
\end{equation}
Hence, for $0\leq{t}\leq{T}$
\[\frac{d}{d{t}}\int{dv}\geq-\int{|R|}{dv}\geq-{\left(\int{R^2}dv\right)^{1/2}\left(\int{dv}\right)^{1/2}\geq-{C}\left(\int{dv}\right)^{1/2}}.\]
Similarly,
\[\frac{d}{d{t}}\int{dv}\leq{C}\left(\int{dv}\right)^{1/2}.\]
It is then easy to derive
\[|\sqrt{vol(t)}-\sqrt{vol(0)}|\leq{Ct}.\]
From this inequality, it is easy to choose $T^{'}=\min\{\frac{1}{2C},T\}$ such that (\ref{volume estimate}) is satisfied. It is easy to see such a $T_1$ exists since $T\geq{T_0}$, where $T_0$ does not depend on $\epsilon_0$.
\end{proof}
With Lemma \ref{volume}, we establish (\ref{average scalar}) on $0\leq{t}\leq{T'}\leq{T}$. If we choose $\widetilde{T}=T'$, all conditions of Proposition \ref{evolution of G2} are satisfied on $0\leq{t}\leq{\widetilde{T}}$ and clearly $\widetilde{T}$ has a positive universal lower bound.

\end{proof}

We now derive integral estimates for $G_3$. For the sake of clearness, we first establish the estiamtes for $\int|E|^3dv$ and then derive a similar evolution inequality for $\int{G_3}dv$ as we did in Lemma \ref{evolution of G2}.

\begin{lemma}\label{E3}
Under the conditions of Proposition \ref{evolution of G2}, along the Ricci flow, we have
\[\int|E_{g(t)}|^3dv_{g(t)}\leq{{C\epsilon_0^{3/2}}{t^{-1}}},\]
for any $0<t\leq{\widetilde{T}}$, where $C$ is a universal constant which does not depend on $\epsilon_0$.
\end{lemma}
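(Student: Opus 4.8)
The plan is to run a parabolic $L^p$-to-$L^{3/2}$ smoothing estimate (in the style of D. Yang \cite{Y} and of Lemma 6.3 in \cite{Mar98}) on the evolving quantity $|E|$. The starting point is the evolution inequality for $|E|$ obtained from (\ref{E^2}) together with the algebraic bounds used in the proof of Proposition \ref{evolution of G2}: namely $\pm\, trE^3 \leq \frac{\sqrt{6}}{6}|E|^3$ and $WEE \leq \frac{\sqrt{6}}{3}(\|W^+\|+\|W^-\|)|E|^2$ (Lemma \ref{bew}). This yields a differential inequality of the schematic form
\begin{equation*}
\frac{\partial}{\partial t}|E|^2 \leq \Delta |E|^2 - 2|\nabla E|^2 + c\big(|E| + \|W^-\| + \|W^+\|\big)|E|^2 + c\,\bar R\,|E|^2.
\end{equation*}
The difficulty with the $\|W^+\|$ term is that it is only $L^2$-small, not pointwise small, so it must be absorbed using a Sobolev inequality rather than treated as a lower-order coefficient; this is the standard mechanism in Margerin's and Yang's arguments and is exactly why the $L^2$-smallness of $G_2(t)$ (guaranteed on $[0,\widetilde T]$ by Proposition \ref{evolution of G2} and Proposition \ref{choice of initial metric}) together with the uniform Yamabe lower bound (\ref{Yamabe}) and the bound (\ref{average scalar}) on $\bar R$ are the hypotheses we carry.

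The key steps, in order, are: (1) Set $w = |E|$ and multiply the evolution inequality by $w^{q-2}$ for a suitable exponent and integrate by parts, producing on the left $\frac{d}{dt}\int w^q\,dv$ (modulo the $-Rw^q$ term from $\frac{\partial}{\partial t}dv = -R\,dv$, which combines favorably) and on the right a good gradient term $-\,c_q \int |\nabla w^{q/2}|^2\,dv$ plus error terms. (2) Apply the conformally invariant Sobolev inequality $Y(t)\big(\int \phi^4\big)^{1/2} \leq \int(|\nabla\phi|^2 + \frac16 R\phi^2)$ with $\phi = w^{q/2}$; the $L^2$-smallness of $\|W^-\|$, $|E|$ and the $L^2$-bound on $\bar R$ let us absorb all error terms involving $w$, $\|W^-\|$ and the Sobolev-paired $\|W^+\|$ into the gradient/Sobolev term, at the cost of a universal constant. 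This produces an inequality of the form $\frac{d}{dt}\int w^q\,dv \leq C\big(\int w^q\,dv\big) - c\big(\int w^q\,dv\big)^{1+\delta} (\text{volume-normalized})$, but more usefully a reverse-Hölder / $L^p\to L^{q}$ gain inequality. (3) Run Moser iteration in the parabolic variable, or more simply apply the Nash-Moser $L^2\to L^\infty$-type smoothing of \cite{Y} starting from the $L^2$-bound $\int|E|^2\,dv \leq \frac12\epsilon_0$ on $[0,\widetilde T]$, to conclude the interpolated estimate $\int |E_{g(t)}|^3\,dv_{g(t)} \leq C\,\epsilon_0^{3/2}\,t^{-1}$. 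The power $t^{-1}$ and the power $\epsilon_0^{3/2}$ are dictated by the scaling of the parabolic smoothing estimate between $L^2$ and $L^3$ in dimension $4$ (gaining one unit of integrability "costs" one power of $t^{-1/2}$ in $L^2$-norm, i.e. $t^{-1}$ in the cubed quantity).

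The main obstacle is step (2): carefully checking that \emph{all} the cross terms — in particular the term $\frac{4\sqrt6}{3}\|W^+\|\,|E|^2$ with its only-$L^2$-small coefficient, and the $\bar R$-terms — can be absorbed uniformly in $\epsilon_0$ using only the Sobolev constant lower bound $Y(t)\geq b$ and the bound $\bar R(t)\leq a$, so that the constant $C$ in the final estimate is genuinely universal (independent of $\epsilon_0$) on the \emph{uniform} time interval $[0,\widetilde T]$ furnished by Proposition \ref{choice of initial metric}. This is a Hölder/Sobolev bookkeeping argument with no conceptual surprises, but it must be done with the same care as in (\ref{E2E4})–(\ref{W2W4}); once it is in place, the parabolic Moser iteration of \cite{Y} applies verbatim and gives the stated bound. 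The same scheme will then be repeated for the other three terms in $G_3$ to obtain the analogue for $\int G_3(t)\,dv$.
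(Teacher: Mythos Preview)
Your overall strategy---derive an evolution inequality for $\int|E|^p\,dv$, absorb the cubic error terms via the conformally invariant Sobolev inequality using the $L^2$-smallness of $G_2$, and then run a parabolic smoothing argument---is the right one, and is what the paper does. However, there is one genuine misconception and one methodological difference worth flagging.

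\textbf{The $\|W^+\|$ term.} You write that $\|W^+\|$ is ``only $L^2$-small'' and propose to absorb the term $\tfrac{4\sqrt6}{3}\|W^+\|\,|E|^2$ by the Sobolev inequality. This is incorrect: $\int\|W^+\|^2\,dv\approx 12\pi^2$ is \emph{not} small, and a direct H\"older/Sobolev absorption would fail numerically against the Yamabe constant. The mechanism actually used in (\ref{E2E4}) is the identity $2\sqrt6\,\|W^+\| = R - F^+ \leq \bar R + (R-\bar R) + |(F^+)_-|$, which converts the large $\|W^+\|$ coefficient into (i) a harmless $\bar R\leq a$ contribution and (ii) two genuinely $L^2$-small pieces $(R-\bar R)$ and $(F^+)_-$ that \emph{do} absorb. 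This substitution is the reason $(F^+)_-$ and $(R-\bar R)$ are included in $G_k$ in the first place; without it your step (2) does not close.

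\textbf{The smoothing step.} You propose to invoke Yang's lemma (Lemma~\ref{Yang}) or full Moser iteration. Note that Yang's lemma as stated requires the coefficient $b$ to lie in $L^{q/2}$ for some $q>n=4$, i.e.\ in $L^{>2}$; at this stage only $L^2$ control is available, so a direct appeal to \cite{Y} is circular (indeed, the paper uses Lemma~\ref{Yang} only \emph{after} the $L^3$ bound on $G_3$ is in hand). The paper instead does something more elementary: from the $p=2$ inequality one gets space--time control of $\big(\int|E|^4\big)^{1/2}$; the interpolation $\int|E|^3\leq\big(\int|E|^2\big)^{1/2}\big(\int|E|^4\big)^{1/2}$ then bounds the time-integral of $\int|E|^3$; feeding this into the $p=3$ inequality with two nested time cutoffs $\tau=t/4$, $\tau'=t/2$ yields $\int|E|^3\,dv\leq C\epsilon_0^{3/2}t^{-1}$ directly. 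This two-step bootstrap replaces the Moser iteration you have in mind and is both shorter and avoids the $L^{>2}$ hypothesis.
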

\begin{proof}
It is clear from (\ref{E2E4}) that
\begin{equation}\label{|E|^2 evolution}
\frac{d}{dt}\int|E|^2dv+C\left(\int|E|^4dv\right)^{1/2}\leq{C\int|E|^2dv},
\end{equation}
where $C$ is a constant which does not depend on $\epsilon_0$.
From (\ref{E^2}), we can compute
\begin{equation}\label{E^p}
 \frac{\partial}{\partial{t}}|E|^p=\frac{p}{2}|E|^{p-2}\frac{\partial}{\partial{t}}|E|^2=\frac{p}{2}|E|^{p-2}\left(\Delta|E|^2-2|\nabla{E}|^2+4WEE-4trE^3+\frac{2}{3}R|E|^2\right),
\end{equation}
Note that
\[\Delta|E|^p=\frac{p}{2}|E|^{p-2}\Delta|E|^2+p(p-2)|E|^{p-2}|\nabla|E||^2.\]
From this identity and (\ref{E^p}), it is easy to derive for $p\geq3$
\begin{equation}\label{|E|^p integral}
\begin{split}
\frac{d}{dt}\int|E|^pdv  & =\int\left(\frac{\partial}{\partial{t}}|E|^p-R|E|^p\right)dv \\
                                        &  \leq\int\left(-p(p-2)|E|^{p-2}|\nabla{|E|}|^2+2p|E|^2WEE\right)dv \\
                                        &  +\int\left(-2p|E|^{p-2}trE^3+\left(\frac{p}{3}-1\right)R|E|^p\right)dv \\
                                        &  =\int\left(-\frac{4(p-2)}{p}|\nabla{|E|^{\frac{p}{2}}}|^2-\frac{2(p-2)}{3p}R|E|^p\right)dv \\
                                        &  +\int\left(2p|E|^2WEE-2p|E|^{p-2}trE^3+C_pR|E|^p\right)dv
\end{split}
\end{equation}
We now estimate the terms in last line of (\ref{|E|^p integral})
\begin{equation}
\int{||E|^{p-2}trE^3|}dv\leq\left(\int|E|^2dv\right)^{1/2}\left(\int|E|^{2p}dv\right)^{1/2}
\end{equation}
\begin{equation}
\begin{split}
\int|E|^{p-2}WEEdv & \leq\frac{\sqrt{6}}{3}\int\left(||W^-||+||W^+||\right)|E|^pdv \\
                                  & \leq\frac{\sqrt{6}}{3}\left(\int||W^-||^2dv\right)^{1/2}\left(\int|E|^{2p}dv\right)^{1/2} \\
                                  & +\frac{1}{6}\left(\int((F^+)_-)^2dv\right)^{1/2}\left(\int|E|^{2p}dv\right)^{1/2}+\frac{1}{6}\int{R|E|^p}dv
\end{split}
\end{equation}
\begin{equation}
\begin{split}
\int{|R||E|^p}dv & \leq\int|R-\bar{R}||E|^pdv+\bar{R}\int|E|^pdv\\
                            & \leq\left(\int(R-\bar{R})^2dv\right)^{1/2}\left(\int|E|^{2p}dv\right)^{1/2}+\bar{R}\int|E|^pdv
\end{split}
\end{equation}
From the smallness assumption of $\epsilon_0$, it is now easy to derive
\begin{equation}\label{|E|^p evolution}
\frac{d}{dt}\int|E|^pdv+C_p\left(\int|E|^{2p}dv\right)^{1/2}\leq{C_p\int|E|^pdv}.
\end{equation}
In this proof, we shall only need this formula for $p=2,3$.
Take two smooth cut-off functions $\phi_1$ and $\phi_2$ such that $0\leq\phi_i\leq1$ on $[0,\widetilde{T}]$ for $i=1,2$.
Take $\tau<\tau'<\widetilde{T}$.
For $\phi_1$, we choose $0\leq\phi_1\leq1$ on $[0,\tau]$ and $\phi_1\equiv1$ on $[\tau,\widetilde{T}]$.
For $\phi_2$, we choose $\phi_2\equiv0$ on $[0,\tau]$, $0\leq\phi_2\leq1$ on $[\tau,\tau']$, and $\phi_1\equiv1$ on $[\tau',\widetilde{T}]$.
Also assume $|\phi_i'|_{L^\infty}$ have appropriate bound.
It is now easy to derive
\begin{equation}\label{|E|^p evolution cutoff}
\frac{d}{dt}\left(\phi_i\int|E|^pdv\right)+C\phi_i\left(\int|E|^{2p}dv\right)^{1/2}\leq{C\left(\phi_i+|\phi_i'|\right)\int|E|^pdv},
\end{equation}
for $i=1,2$ and $p=2,3$.

Set $p=2$ and $i=1$. Integrate (\ref{|E|^p evolution cutoff}) over $[0,t]$ for some $t>\tau'$:

\begin{equation}\label{E^2 integration}
\int|E|^2dv+C\int_\tau^t\left(\int|E|^4dv\right)^{1/2}ds\leq{C\left(1+\frac{1}{\tau}\right)\int_0^t\left(\int|E|^2dv\right)ds}
\end{equation}

Set $p=3$ and $i=2$. Integrate (\ref{|E|^p evolution cutoff}) over $[0,t]$ for some $t>\tau'$:

\begin{equation}\label{E^3 integration}
\int|E|^3dv+C\int_{\tau'}^t\left(\int|E|^6dv\right)^{1/2}ds\leq{C\left(1+\frac{1}{\tau'-\tau}\right)\int_\tau^t\left(\int|E|^3dv\right)ds}
\end{equation}

Now we have
\begin{align*}
\int_\tau^t\left(\int|E|^3dv\right)ds &\leq \int_\tau^t\left(\int|E|^2dv\right)^{1/2}\left(\int|E|^4dv\right)^{1/2}ds \\
                                                    &\leq C\epsilon_0^{1/2}\int_\tau^t\left(\int|E|^4\right)^{1/2}ds \\
                                                    &\leq C\epsilon_0^{1/2}\left(1+\frac{1}{\tau}\right)\int_0^t\left(\int|E|^2dv\right)ds
\end{align*}
where second line follow from taking $\epsilon=\frac{1}{2}\epsilon_0$ in Lemma 5.5 and third line follows from (\ref{E^2  integration}).
It then follows
\begin{align*}
\int|E|^3dv  &\leq C\left(1+\frac{1}{\tau'-\tau}\right)\int_\tau^t\left(\int|E|^3dv\right)ds \\
                   &\leq C\epsilon_0^{1/2}\left(1+\frac{1}{\tau}\right)\left(1+\frac{1}{\tau'-\tau}\right)\int_0^t\left(\int|E|^2dv\right)ds \\
                   &\leq C\epsilon_0^{3/2}\left(1+\frac{1}{\tau}\right)\left(1+\frac{1}{\tau'-\tau}\right)t
\end{align*}
Take $\tau=\frac{1}{4}t$ and $\tau'=\frac{1}{2}t$ and we get desired estimate. In particular, if we choose $t\in[\frac{1}{4}\widetilde{T},\widetilde{T}]$, we have
\begin{equation}\label{E3 small}
\sup_{\widetilde{T}/4\leq{t}\leq{\widetilde{T}}}\int|E|^3dv\leq{C\epsilon_0^{3/2}}.
\end{equation}
\end{proof}

Now we prove an evolution inequality for $\int{G_3}dv$ similar as (\ref{evolution G_2}).

\begin{proposition}\label{evolution of G3}
Under the same conditions of Proposition \ref{evolution of G2}, for $\frac{1}{4}\widetilde{T}\leq{t}\leq{\widetilde{T}}$, we have
\begin{equation}\label{evolution G_3}
  \frac{d}{dt}\int_MG_3(t)dv_{g(t)}\leq{\widetilde{a}'}\int_MG_3(t)dv_{g(t)}-\widetilde{b}'\left(\int_M{G_6(t)}dv_{g(t)}\right)^{1/2},
\end{equation}
where $\widetilde{a}'$ and $\widetilde{b}'$ are uniform positive constants independent of $\epsilon_0$ and we may choose $\widetilde{a}'=c_1a$ and $\widetilde{b}'=c_2b$, where $c_1$ and $c_2$ are universal positive constants.
\end{proposition}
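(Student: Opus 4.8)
The plan is to run the same argument as in Proposition~\ref{evolution of G2}, but one order higher: work with the cubic quantities $|E|^3$, $|R-\bar R|^3$, $\|W^-\|^3$, $|(F^+)_-|^3$ in place of their squares, and apply the conformally invariant Sobolev inequality to the $3/2$-powers of these curvature functions to manufacture the negative term $-\widetilde b'\bigl(\int G_6\bigr)^{1/2}$. As a preliminary step I would first establish, on the interval $\tfrac14\widetilde T \le t \le \widetilde T$, the smallness
\[
\sup_{\widetilde T/4 \le t \le \widetilde T}\int_M G_3(t)\,dv_{g(t)} \le C\epsilon_0^{3/2},
\]
with $C$ independent of $\epsilon_0$. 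This follows by repeating the cut-off/iteration argument of Lemma~\ref{E3} for each of the four curvature quantities: the evolution inequalities \eqref{R^2}, \eqref{W+- inequality}, \eqref{F inequality} have the same reaction–diffusion shape as \eqref{E^2}, so one obtains the analogue of \eqref{|E|^p evolution} for $R-\bar R$, $\|W^-\|$, $(F^+)_-$ with $p=2,3$, and then the same $\int_\tau^t$ bootstrap yields \eqref{E3 small} for each term. This is the only place the restriction $\tfrac14\widetilde T \le t \le \widetilde T$ enters.

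Next I would differentiate each cubic integral term by term, exactly as in \eqref{|E|^p integral}--\eqref{|E|^p evolution} with $p=3$ (already recorded for $|E|$), using \eqref{volume element} together with the relevant evolution formula. For $\int\|W^-\|^3$, for instance, set $\phi=\|W^-\|^{3/2}$: integrating the diffusion term of \eqref{W+- inequality} by parts, a Kato-type identity turns $3\int\|W^-\|^2\Delta\|W^-\|$ into $-\tfrac83\int|\nabla\phi|^2$, which together with the $-\int R\|W^-\|^3$ contribution of \eqref{volume element} majorizes a constant multiple of $-\int\bigl(|\nabla\phi|^2+\tfrac16 R\phi^2\bigr)\le -b\bigl(\int\|W^-\|^6\bigr)^{1/2}$ modulo an $\epsilon_0$-small error $\int(R-\bar R)\phi^2$. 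The same device applies to the other three terms; for $(F^+)_-$ one uses $(F^+)_-\le 0$ to see that the diffusion integration by parts still produces a term of the favourable sign. The quartic reaction terms $3\sqrt6\|W^-\|^4$, $3|(F^+)_-|^4$, $\tfrac13(R-\bar R)^3$, etc., are absorbed into $-\widetilde b'\bigl(\int G_6\bigr)^{1/2}$ via $\int|f|^4\le\bigl(\int|f|^2\bigr)^{1/2}\bigl(\int|f|^6\bigr)^{1/2}\le\epsilon_0^{1/2}\bigl(\int|f|^6\bigr)^{1/2}$ and, for the mixed quadratic reaction terms ($\|W^-\|^2|E|^2$, $(R-\bar R)|E|^2$, $R|(F^+)_-|^2$, \dots), Hölder followed by Young's inequality; the linear-in-$\bar R$ leftovers (e.g.\ $\bar R\int|E|^3$) collect into $\widetilde a'\int G_3$ with $\widetilde a'=c_1 a$.

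The only curvature quantity appearing that is \emph{not} small is $\|W^+\|$, and as in Proposition~\ref{evolution of G2} it enters through a single cross term, $6\int|E|\,WEE$ in the evolution of $\int|E|^3$. By Lemma~\ref{bew} this is at most $2\sqrt6\bigl(\|W^+\|+\|W^-\|\bigr)\int|E|^3$, and I would eliminate $\|W^+\|$ through the modified-scalar-curvature identity $2\sqrt6\|W^+\|=R-F^+\le\bar R+|R-\bar R|+|(F^+)_-|$ (using $F^+\ge(F^+)_-$): the $\bar R|E|^3$ piece feeds $\widetilde a'\int G_3$, while the $|R-\bar R|\,|E|^3$, $|(F^+)_-|\,|E|^3$ and $\|W^-\|\,|E|^3$ pieces are each $\le\epsilon_0^{1/2}\bigl(\int|E|^6\bigr)^{1/2}$ by Cauchy--Schwarz and the $G_2$-smallness $\int(R-\bar R)^2,\int|(F^+)_-|^2,\int\|W^-\|^2\le\epsilon_0$, hence absorbed. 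The term $\int|E|\,|\mathrm{tr}\,E^3|\le\int|E|^4$ is handled as above. Combining everything and keeping track of constants yields \eqref{evolution G_3} with $\widetilde a'=c_1 a$, $\widetilde b'=c_2 b$. I expect the main obstacle to be purely a matter of bookkeeping rather than a new idea: one must extract from the Sobolev inequality a coefficient strictly larger than the $c_2 b$ actually claimed — spending the surplus to soak up \emph{all} of the quartic and mixed error terms into the single negative term $-\widetilde b'\bigl(\int G_6\bigr)^{1/2}$ — while verifying that every constant produced along the way depends only on $a$ and $b$ (through $\bar R\le a$ and the Sobolev/Yamabe constant $Y\ge b$) and not on $\epsilon_0$.
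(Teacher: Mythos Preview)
Your approach is essentially the paper's and will work; two points of comparison are worth recording.

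First, your preliminary step is stronger than necessary, and in the paper the logical order is reversed: the paper uses only the smallness of $\int|E|^3$ (Lemma~\ref{E3}, already in hand on $[\tfrac14\widetilde T,\widetilde T]$) to prove the proposition, and then deduces full $G_3$-smallness afterward (Lemma~\ref{L3}) \emph{from} the evolution inequality~\eqref{evolution G_3}. The reason $\int|E|^3$ alone suffices is that every genuinely mixed cross term in the cubic evolutions --- $\int\|W^-\|^2|E|^2$, $\int(R-\bar R)^2|E|^2$, and the nonlocal term below --- carries $|E|^2$ as one factor, so after H\"older and Young the leftover is $K^{-1}\bigl(\int|E|^3\bigr)^2$, which is bounded by $C\epsilon_0^{3/2}\int|E|^3\le C\int G_3$ via~\eqref{E3 small} alone. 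Your route (bootstrapping all four $L^3$ bounds first) also works, but you would still have to do the $|E|$ case first because of these same cross terms, so it is the same labor in a different order rather than a shortcut.

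Second, the term $\int|R-\bar R|^3$ hides a wrinkle you do not mention: since $\bar R$ is time-dependent and nonlocal, the evolution of $(R-\bar R)^2$ picks up $-2(R-\bar R)\,\partial_t\bar R$ with $\partial_t\bar R=\mathrm{vol}^{-1}\int(2|E|^2-\tfrac12 R^2)\,dv+\bar R^2$. After multiplication by $\tfrac32|R-\bar R|$ this produces the nonlocal errors $\mathrm{vol}^{-1}\bigl(\int|E|^2\,dv\bigr)\int(R-\bar R)^2\,dv$ and $\bigl|\bar R^2-\mathrm{vol}^{-1}\int R^2\,dv\bigr|\int(R-\bar R)^2\,dv$. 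These do succumb to the same H\"older/Young/Sobolev-absorption device (together with the volume control from Lemma~\ref{volume}), but they are not of the purely local reaction--diffusion shape you invoke and deserve explicit treatment.
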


\begin{proof}
The proof is similar to that of (\ref{evolution G_2}).  From the definition of $G_k$, we have
\begin{equation}\label{G_3}
\frac{d}{dt}\int{G_3}dv= \frac{d}{dt}\int|E|^{3}dv+\frac{d}{dt}\int|R-\bar{R}|^{3}dv+\frac{d}{dt}\int||W^-||^{3}dv+\frac{d}{dt}\int|(F^+)_-|^{3}dv.
\end{equation}
Note that we shrink the time interval to $[\frac{1}{4}\widetilde{T},\widetilde{T}]$, so from the previous lemma, we have known that $\int|E|^3$ is bounded by $c(\epsilon_0)$,
where $c(\epsilon_0)\to0$ as $\epsilon_0\to0$.

We now estimate the right hand side of (\ref{G_3}) term by term

\begin{equation} \label{E3E6}
\begin{split}
\frac{d}{dt}\int|E|^3dv &  =\int\left(\frac{\partial}{\partial{t}}|E|^3-R|E|^3\right)dv \\
                                       &  = \int\left(\frac{3}{2}|E|\left(\Delta|E|^2-2|\nabla{E}|^2+4WEE-4trE^3\right)\right)dv \\
                                       &  \leq{-c_2Y}\left(\int|E|^6dv\right)^{1/2}+c_1{\bar{R}}\int|E|^3dv,
\end{split}
\end{equation}
where the inequality is established similarly to (\ref{E2E4}).

\begin{equation} \label{W3W6}
\begin{split}
\frac{d}{dt}\int||W^-||^3dv & =\int\left(\frac{\partial}{\partial{t}}||W^-||^3-R||W^-||^3\right)dv \\
                                               & \leq\int\left(3||W^-||^2\Delta||W^-||+3\sqrt{6}||W^-||^3+\frac{\sqrt{6}}{2}||W^-||^2|E|^2-R||W^-||^3\right)dv
\end{split}
\end{equation}

Recall convexity inequality:
\begin{equation}\label{convex}
ab\leq{\frac{a^p}{p}+\frac{b^q}{q}}
\end{equation}
for $a,b\geq0$ and $1/p+1/q=1$.
Take $p=3/2$, $q=3$, $a=\left(\int||W^-||^6dv\right)^{1/3}$ and $b=\left(\int|E|^3dv\right)^{2/3}$. We have
\begin{equation}\label{convex W}
\begin{split}
\int||W^-||^2|E|^2dv&  \leq\left(\int||W^-||^6dv\right)^{1/3}\left(\int|E|^3dv\right)^{2/3} \\
                                    &  \leq{\frac{2K}{3}\left(\int||W^-||^6dv\right)^{1/2}+\frac{1}{3K}\left(\int|E|^3dv\right)^2}
\end{split}
\end{equation}
We may take $K$ to be a small multiple of the Yamabe constant and absorb the first term of (\ref{convex W}) by the Soblev inequality.
The second term of (\ref{convex W}) is bounded by a constant multiple of $\int|E|^3dv$ from (\ref{E3 small}) and the smallness assumption of $\epsilon_0$.
It is then easy to derive from Sobolev inequality and Cauchy-Schwartz inequality
\begin{equation}
\frac{d}{dt}\int||W^-||^3dv \leq {-c_2Y}\left(\int||W^-||^6dv\right)^{1/2}+c_1{\bar{R}}\int||W^-||^3dv+C\int|E|^3dv
\end{equation}

\begin{equation}\label{F3F6}
\begin{split}
\frac{d}{dt}\int|(F^+)_-|^3dv &  =-\frac{d}{dt}\int((F^+)_-)^3dv\\
                                                   & =\int\left(-3((F^+)_-)^2\frac{\partial}{\partial{t}}((F^+)_-)-R|(F^+)_-|^3\right)dv\\
                                                   & \leq  {-c_2Y}\left(\int|(F^+)_-|^6dv\right)^{1/2}+c_1{\bar{R}}\int|(F^+)_-|^3dv
\end{split}
\end{equation}
The inequality follows from evolution inequality (\ref{F inequality}), Sobolev inequality and the following trick:
\begin{equation}
\begin{split}
\int{R}|(F^+)_-|^3dv & \leq{\int{(R-\bar{R})}|(F^+)_-|^3dv}+\bar{R}\int|(F^+)_-|^3dv \\
                                     & \leq{\left(\int(R-\bar{R})^2\right)^{1/2}\left(\int|(F^+)_-|^6dv\right)^{1/2}}+C\int|(F^+)_-|^3dv
\end{split}
\end{equation}
Note the first term of the last line can be absorbed by Sobolev inequality with the smallness asssumption of $\epsilon_0$.

\begin{equation}\label{R3R6}
\begin{split}
\frac{d}{dt}\int|R-\bar{R}|^3dv & = \int\left(\frac{\partial}{\partial{t}}|R-\bar{R}|^3-R|R-\bar{R}|^3\right)dv \\
                                                     & = \int\left(\frac{3}{2}|R-\bar{R}|\frac{\partial}{\partial{t}}(R-\bar{R})^2-R|R-\bar{R}|^3\right)dv
\end{split}
\end{equation}
\begin{equation}\label{(R-Rbar)^2}
\begin{split}
\frac{\partial}{\partial{t}}(R-\bar{R})^2 & = \frac{\partial}{\partial{t}}R^2 -2\frac{\partial}{\partial{t}}(R\bar{R})+\frac{\partial}{\partial{t}}\bar{R}^2\\
                                                                  & = \Delta{(R^2)}-2|\nabla{R}|^2+4R|E|^2+R^3-2R\frac{\partial}{\partial{t}}\bar{R}-2\bar{R}\frac{\partial}{\partial{t}}R+2\bar{R}\frac{\partial}{\partial{t}}\bar{R} \\
                                                                  & = \Delta{(R-\bar{R})^2}-2|\nabla{(R-\bar{R})}|^2+4(R-\bar{R})|E|^2+R^2(R-\bar{R})-2(R-\bar{R})\frac{\partial}{\partial{t}}\bar{R}
\end{split}
\end{equation}
Recall the evolution equation of $\bar{R}$ under Ricci flow:
\begin{equation}\label{Rbar}
\frac{\partial}{\partial{t}}\bar{R}=\frac{1}{vol}\int\left(2|E|^2-\frac{1}{2}R^2\right)dv+\bar{R}^2.
\end{equation}
Plugging (\ref{(R-Rbar)^2}) and (\ref{Rbar}) into (\ref{R3R6}), we can derive
\begin{equation}\label{R3R6 plugin}
\begin{split}
\frac{d}{dt}\int|R-\bar{R}|^3dv  & \leq\int\left(\frac{3}{2}|R-\bar{R}|\Delta(R-\bar{R})^2-R|R-\bar{R}|^3\right)dv \\
                                                      & +6\int(R-\bar{R})^2|E|^2dv+\frac{3}{2}\int|R^2-\bar{R}^2|(R-\bar{R})^2dv \\
                                                      & +\frac{3}{2}\left|\bar{R}^2-\frac{\int{R^2}dv}{vol}\right|\int(R-\bar{R})^2dv+\frac{6}{vol}\int|E|^2dv\int(R-\bar{R})^2dv.
\end{split}
\end{equation}
Now we estimate the terms in second and third line of (\ref{R3R6 plugin}):
\begin{equation}
\begin{split}\label{R2E2}
\int(R-\bar{R})^2|E|^2dv  &  \leq\left(\int|R-\bar{R}|^6dv\right)^{1/3}\left(\int|E|^3dv\right)^{2/3} \\
                                            &  \leq{\frac{2K}{3}\left(\int|R-\bar{R}|^6dv\right)^{1/2}+\frac{1}{3K}\left(\int|E|^3dv\right)^2}
\end{split}
\end{equation}
We may take $K$ to be a small multiple of the Yamabe constant and absorb the first term of (\ref{R2E2}) by the Soblev inequality.
The second term of (\ref{R2E2}) is bounded by a constant multiple of $\int|E|^3dv$ from (\ref{E3 small}) and the smallness assumption of $\epsilon_0$.
\begin{equation}\label{R2E2 integral}
\begin{split}
\int|E|^2dv\int(R-\bar{R})^2dv & \leq{C}{\left(\int|E|^3dv\right)^{2/3}\left(\int(R-\bar{R})^6dv\right)^{1/3}}\\
                                                    & \leq{{C}{K}\left(\int(R-\bar{R})^6dv\right)^{1/2}+\frac{C}{K}\left(\int|E|^3dv\right)^2}
\end{split}
\end{equation}
The $C$ in first line just depends on volume. For the second line, we may take $K$ to be a small multiple of the Yamabe constant and absorb the first term of (\ref{R2E2 integral}) by the Soblev inequality.
The second term of (\ref{R2E2 integral}) is bounded by a constant multiple of $\int|E|^3dv$ from (\ref{E3 small}) and the smallness assumption of $\epsilon_0$.
\begin{equation}
\begin{split}
\int|R^2-\bar{R}^2|(R-\bar{R})^2dv & =\int|R+\bar{R}||R-\bar{R}|^3dv \\
                                                              & \leq{\bar{R}\int(R-\bar{R})^3dv+\int|R||R-\bar{R}|^3dv} \\
                                                              & \leq{2\bar{R}\int(R-\bar{R})^3dv+\int|R-\bar{R}|^4dv} \\
                                                              & \leq{2\bar{R}\int(R-\bar{R})^3dv+\left(\int(R-\bar{R})^2dv\right)^{1/2}\left(\int|R-\bar{R}|^6dv\right)^{1/2}}
\end{split}
\end{equation}
The last term can be absorbed by Sobolev inequality from the smallness assumption of $\epsilon_0$.
\begin{equation}
\begin{split}
\left|\bar{R}^2-\frac{\int{R^2}dv}{vol}\right|\int(R-\bar{R})^2dv &=\frac{1}{vol}\left(\int(R-\bar{R})^2dv\right)^2\\
                                                                                                             & \leq{C}\left(\int(R-\bar{R})^2dv\right)^{1/2}\left(\int(R-\bar{R})^6dv\right)^{1/2}
\end{split}
\end{equation}
This term can be absorbed by Sobolev inequality from the smallness assumption of $\epsilon_0$.

Combining all these estimates for $|R-\bar{R}|$, we derive
\begin{equation}\label{R3R6 final}
\frac{d}{dt}\int|R-\bar{R}|^3dv \leq {-c_2Y}\left(\int|R-\bar{R}|^6dv\right)^{1/2}+c_1{\bar{R}}\int|{R-\bar{R}}|^3dv+C\int|E|^3dv
\end{equation}

Now we combine (\ref{E3E6})(\ref{W3W6})(\ref{F3F6})(\ref{R3R6 final}) to derive \ref{evolution G_3}.
\end{proof}
\begin{lemma}\label{L3}
With the modified Yamabe metric chosen as initial metric, under the Ricci flow, we have
\[\sup_{\widetilde{T}/2\leq{t}\leq{\widetilde{T}}}\int{G}_3(t)dv_{g(t)}\leq{C\epsilon_0^{3/2}}.\]
%\[\sup_{\widetilde{T}/2\leq{t}\leq{\widetilde{T}}}\int|R-\bar{R}|^3dv\leq{C\epsilon_0^{3/2}}.\]
%\[\sup_{\widetilde{T}/2\leq{t}\leq{\widetilde{T}}}\int||W^-||^3dv\leq{C\epsilon_0^{3/2}}.\]
%\[\sup_{\widetilde{T}/2\leq{t}\leq{\widetilde{T}}}\int|F_-^+|^3dv\leq{C\epsilon_0^{3/2}}.\]
\end{lemma}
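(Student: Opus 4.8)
The plan is to integrate the differential inequality (\ref{evolution G_3}) of Proposition \ref{evolution of G3} in time and feed in the initial $L^3$-bound already established in Lemma \ref{E3} together with the uniform $L^2$-control of $G_2$ coming from Proposition \ref{evolution of G2}. Concretely, I would first note that by the proof of Lemma \ref{E3}, applied not only to $|E|$ but in exactly the same way to each of $|R-\bar R|$, $\|W^-\|$, and $|(F^+)_-|$ (the argument there only used the two-term-then-cutoff structure of the evolution inequality (\ref{|E|^p evolution}), which holds verbatim for all four quantities by the estimates in the proof of Proposition \ref{evolution of G3}), one obtains
\[
\sup_{\widetilde T/4 \le t \le \widetilde T} \int_M G_3(t)\,dv_{g(t)} \le C\epsilon_0^{3/2}.
\]
So the content of Lemma \ref{L3} is essentially the same smoothing estimate, and the only extra observation needed is that the coupling terms $C\int|E|^3\,dv$ appearing on the right-hand sides of (\ref{W3W6}), (\ref{F3F6}), (\ref{R3R6 final}) are themselves controlled by $C\epsilon_0^{3/2}$ on the interval $[\widetilde T/4, \widetilde T]$ by (\ref{E3 small}).

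The key steps, in order, are: (1) record that on $[\widetilde T/4,\widetilde T]$ Proposition \ref{evolution of G3} gives
\[
\frac{d}{dt}\int_M G_3(t)\,dv_{g(t)} \le \widetilde a' \int_M G_3(t)\,dv_{g(t)} - \widetilde b'\Big(\int_M G_6(t)\,dv_{g(t)}\Big)^{1/2};
\]
(2) drop the good (negative) $G_6$-term and apply Grönwall on the subinterval $[\widetilde T/4, t]$ for $t \le \widetilde T$, so that
\[
\int_M G_3(t)\,dv_{g(t)} \le e^{\widetilde a'(t - \widetilde T/4)} \int_M G_3(\widetilde T/4)\,dv_{g(\widetilde T/4)};
\]
(3) bound the factor $e^{\widetilde a'(t-\widetilde T/4)} \le e^{\widetilde a' \widetilde T}$, which is a universal constant since $\widetilde a' = c_1 a$ is uniform (Proposition \ref{choice of initial metric}, with $a$ the uniform bound on $\bar R$) and $\widetilde T$ has a universal upper bound on the time interval we are working on; (4) bound $\int_M G_3(\widetilde T/4)\,dv$ by $C\epsilon_0^{3/2}$ using the cutoff/interpolation argument of Lemma \ref{E3} applied to each of the four summands — for $|E|^3$ this is exactly (\ref{E3 small}), and for $\|W^-\|^3$, $|(F^+)_-|^3$, $|R-\bar R|^3$ one repeats that argument using the corresponding pointwise evolution inequalities (\ref{W+- inequality}), (\ref{F inequality}) and the identity (\ref{(R-Rbar)^2}), interpolating the $L^3$-norm in time between the $L^2$-norm (which is $\le \epsilon_0/2$ by (\ref{smallness})) and the $L^4$-norm in space-time (controlled via the Sobolev inequality exactly as in (\ref{E^2 integration})).

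The step I expect to be the main obstacle is (4): closing the cutoff-and-interpolation bootstrap for the three curvature quantities other than $|E|$, since one must verify that the coupling terms $C\int|E|^3\,dv$ on the right-hand side of (\ref{W3W6}), (\ref{F3F6}), (\ref{R3R6 final}) do not obstruct the argument. The point is that by the time we work on $[\widetilde T/4, \widetilde T]$ these couplings are already known to be $O(\epsilon_0^{3/2})$ by (\ref{E3 small}), so in the time-integrated version of each evolution inequality they contribute a term of size $C\epsilon_0^{3/2}\cdot\widetilde T = O(\epsilon_0^{3/2})$ — harmless. With that observation in hand, the space-time Sobolev/Cauchy–Schwarz interpolation producing $\int_{\widetilde T/4}^{\widetilde T}\big(\int |R-\bar R|^3\big)\,ds \le C\epsilon_0^{1/2}\int_{\widetilde T/4}^{\widetilde T}\big(\int|R-\bar R|^4\big)^{1/2}\,ds$ and then the corresponding $L^2$-in-space bound run identically to Lemma \ref{E3}, and summing the four resulting estimates yields $\sup_{\widetilde T/4\le t\le\widetilde T}\int_M G_3\,dv \le C\epsilon_0^{3/2}$, which is stronger than the claimed bound on $[\widetilde T/2,\widetilde T]$.
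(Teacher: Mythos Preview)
Your proposal is correct and follows essentially the same route as the paper. The paper's own proof is a single sentence: run the cutoff--interpolation argument of Lemma~\ref{E3} verbatim, but with $G_k$ in place of $|E|^k$, using the aggregate evolution inequalities of Propositions~\ref{evolution of G2} and~\ref{evolution of G3} in place of (\ref{|E|^2 evolution}) and (\ref{|E|^p evolution}).

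The only organizational difference is that you apply the Lemma~\ref{E3} scheme to each of the four summands separately and then sum, whereas the paper applies it once to the aggregate $G_k$ directly. The paper's packaging is cleaner precisely because the coupling terms you worry about in your ``main obstacle'' paragraph have already been absorbed into the aggregate inequalities (\ref{evolution G_2}) and (\ref{evolution G_3}); working with $G_k$ from the start avoids re-deriving that absorption. Your Gr\"onwall detour (steps (1)--(3)) is also unnecessary: once step~(4) yields $\int G_3(\widetilde T/4)\le C\epsilon_0^{3/2}$ by the summand-by-summand argument, that same argument already gives the bound for every $t\in[\widetilde T/4,\widetilde T]$, so there is nothing further to propagate.
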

\begin{proof}
The proof is fundamentally the same as that of Lemma \ref{E3}. The only difference is to replace $|E|^k$ by $G_k(t)$ since we have evolution equations of same type as is shown in
Proposition \ref{evolution of G2} and Proposition (\ref{evolution of G3}).
\end{proof}

%\begin{lemma}
%With same assumption of the preceding lemma, we have (\ref{Sobolev Inequality}) holds for on $[T_0/2,T_0]$.
%\end{lemma}

%\begin{proof}
%This result follows from the fact that
%\[\sup_{T_0/2\leq{t}\leq{T_0}}\int|R|^3dv\leq{C}\]
%and a uniform positive lower bound on Yamabe constant.
%\end{proof}

To derive the $L^\infty$-boundedness, we shall apply the following result established by Deane Yang in \cite{Y}.

\begin{lemma}\label{Yang}
Assume that with respect to the metric $g=g(t)$, $0\leq{t}\leq{T}$, the following Sobolev inequality holds:
\begin{equation}\label{Sobolev Inequality}
\left(\int|\varphi|^{\frac{2n}{n-2}}dv\right)^{\frac{n-2}{n}}\leq{C_S\left[\int|\nabla\varphi|^2dv+\int\varphi^2dv\right]},\varphi\in{W^{1,2}(M^n)}.
\end{equation}
Also, let $b\geq0$ on $M^n\times[0,T]$ satisfy
\begin{equation}
\frac{\partial}{\partial{t}}dv\leq{bdv}.
\end{equation}
Let $q>n$, and suppose $u\geq0$ is a function on $M^n\times[0,T]$ satisfying
\begin{equation}
\frac{\partial{u}}{\partial{t}}\leq\Delta{u}+bu,
\end{equation}
and that
\begin{equation}\label{b L^q/2}
\sup_{0\leq{t}\leq{T}}|b|_{L^{q/2}}\leq\beta.
\end{equation}
Given $p_0>1$, there exists a constant $C=C(n,q,p_0,C_S,\beta)$ such that for $0\leq{t}\leq{T}$,
\begin{equation}\label{Yang's boundedness}
|u(t,\cdot)|_{\infty}\leq{C}e^{Ct}t^{-\frac{n}{2p_0}}|u(0,\cdot)|_{p_0}.
\end{equation}
Moreover, given $p\geq{p_0}>1$, the following inequality holds for $0\leq{t}\leq{T}$:
\begin{equation}\label{Yang's evolution}
\frac{d}{dt}\int{u^p}dv+\int\left|\nabla\left(u^{p/2}\right)\right|^2dv\leq{Cp^{\frac{2n}{q-n}}}\int{u^p}dv
\end{equation}
where $C=C(n,q,p_0,C_S)$.
\end{lemma}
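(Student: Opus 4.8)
This is the parabolic Moser iteration estimate of \cite{Y}; we only indicate the structure of the argument. The heart of the matter is the $L^p$-energy inequality for the subsolution $u$: multiplying $\partial_t u\le\Delta u+bu$ by $pu^{p-1}\ge 0$, integrating over $M^n$ against $g(t)$, using the volume bound $\partial_t\,dv\le b\,dv$, and integrating by parts at fixed $t$, one obtains
\begin{equation*}
\frac{d}{dt}\int u^p\,dv+\frac{4(p-1)}{p}\int|\nabla(u^{p/2})|^2\,dv\le(p+1)\int b\,u^p\,dv.
\end{equation*}
Writing $w=u^{p/2}$, one bounds $\int bw^2\le\|b\|_{L^{q/2}}\|w\|_{L^{2q/(q-2)}}^2$ by H\"older, interpolates $\|w\|_{L^{2q/(q-2)}}$ between $\|w\|_{L^2}$ and $\|w\|_{L^{2n/(n-2)}}$ (the intermediate exponent $2q/(q-2)$ lies strictly between $2$ and $2n/(n-2)$ precisely because $q>n$), applies the Sobolev inequality (\ref{Sobolev Inequality}), and then uses Young's inequality to absorb the resulting $\|\nabla w\|_{L^2}^2$ into the left-hand side. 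Together with (\ref{b L^q/2}) this produces (\ref{Yang's evolution}), the power of $p$ displayed there being exactly the one forced by the choice of Young parameter needed for the absorption.

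To obtain (\ref{Yang's boundedness}) one iterates. Fix $t\in(0,T]$, set $\kappa=1+\tfrac2n$ and $p_k=p_0\kappa^k$, and choose nested time intervals $I_k=[\sigma_k,t]$ with $\sigma_0=0$, $\sigma_k\nearrow t$, $|I_k|=2^{-k}t$, together with time cutoffs $\psi_k$ vanishing on $[0,\sigma_k]$, equal to $1$ on $I_{k+1}$, and with $|\psi_k'|\le C2^k/t$. Multiplying (\ref{Yang's evolution}) with $p=p_k$ by $\psi_k$ and integrating in time controls $\sup_{s\in I_{k+1}}\int u^{p_k}\,dv$ and $\int_{I_{k+1}}\!\!\int|\nabla(u^{p_k/2})|^2\,dv\,ds$ by $C\bigl(p_k^{2n/(q-n)}+2^k/t\bigr)\int_{I_k}\!\!\int u^{p_k}\,dv\,ds$; feeding this into the parabolic Sobolev embedding $L^\infty_tL^2_x\cap L^2_tW^{1,2}_x\hookrightarrow L^{2\kappa}_{t,x}$ (a consequence of (\ref{Sobolev Inequality}) and H\"older) raises the exponent from $p_k$ to $p_{k+1}=\kappa p_k$, so that
\begin{equation*}
\|u\|_{L^{p_{k+1}}(I_{k+1}\times M)}\le E_k^{1/p_{k+1}}\,\|u\|_{L^{p_k}(I_k\times M)},\qquad \log E_k\le C\bigl(1+k+\log_+(1/t)\bigr).
\end{equation*}
Since $\sum_k p_k^{-1}<\infty$ and $\sum_k k\,p_k^{-1}<\infty$, the product $\prod_k E_k^{1/p_{k+1}}$ converges; bookkeeping of the powers of $t$ — the negative powers coming from the cutoff derivatives, whose total is governed by $\sum_{k\ge0}\kappa^{-k}=\tfrac{n+2}{2}$ and gives $t^{-(n+2)/(2p_0)}$, against the $t^{1/p_0}$ gained when bounding the space-time norm $\|u\|_{L^{p_0}([0,t]\times M)}$ by $t^{1/p_0}\sup_{s}\|u(s)\|_{L^{p_0}(M)}$ — yields exactly $t^{-n/(2p_0)}$. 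Since $|I_k|^{1/p_k}\to1$ and $u$ is continuous in time, $\|u\|_{L^{p_k}(I_k\times M)}\to\|u(t)\|_{L^\infty(M)}$, and a last application of (\ref{Yang's evolution}) with $p=p_0$, which without the gradient term is a scalar Gronwall inequality, replaces $\sup_{0\le s\le t}\|u(s)\|_{L^{p_0}}$ by $\|u(0)\|_{L^{p_0}}$ at the cost of the factor $e^{Ct}$. This proves (\ref{Yang's boundedness}).

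The delicate point is the iteration: one must track the $p$-dependence of every constant — from Young's inequality in the energy estimate, from the Sobolev constant, and from the time cutoffs — precisely enough that $\prod_k E_k^{1/p_{k+1}}$ converges, and one must balance the powers of $t$ so as to extract exactly the smoothing exponent $n/(2p_0)$ rather than a worse one. Throughout, $q>n$ is what makes the interpolation-and-absorption in the energy step work: for $q=n$ the interpolation exponent $n/q$ degenerates to $1$ and the gradient term cannot be absorbed, so the borderline case would require an additional smallness hypothesis on $\|b\|_{L^{n/2}}$.
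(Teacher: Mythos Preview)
The paper does not prove this lemma at all; it is quoted verbatim from Yang~\cite{Y} and simply invoked. Your sketch is a correct outline of the standard parabolic Moser iteration that underlies Yang's result: the $L^p$-energy inequality obtained by testing against $pu^{p-1}$, the H\"older--interpolation--Sobolev--Young step to absorb the reaction term (which is exactly where $q>n$ is needed), and the geometric iteration in $p$ with nested time cutoffs to pass from $L^{p_0}$ to $L^\infty$ with the smoothing rate $t^{-n/(2p_0)}$. So there is nothing to compare against in the paper itself; your proposal supplies strictly more than the paper does, and what it supplies is correct in structure and in the essential details.
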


\begin{lemma}\label{Linfty}
With the modified Yamabe metric chosen as the initial metric, we have
\[\sup_{3\widetilde{T}/4\leq{t}\leq{\widetilde{T}}}\{|E|+|R-\bar{R}|+||W^-||+|F^+_-|\}\leq{C}\epsilon_0.\]
\end{lemma}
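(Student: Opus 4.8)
The plan is to apply Deane Yang's parabolic Moser iteration estimate (Lemma \ref{Yang}) to each of the four nonnegative quantities $|E|$, $|R-\bar R|$, $\|W^-\|$, and $|F^+_-|$, using as the ``starting time'' $t = \widetilde T/2$ rather than $t=0$. The key point is that at time $\widetilde T/2$ we already have the $L^3$ smallness $\int G_3\,dv \le C\epsilon_0^{3/2}$ from Lemma \ref{L3}, so we can take $p_0 = 3 > n = 4$... — wait, that is false: $p_0$ must exceed $n/2 = 2$, which $3$ does, but the hypothesis $q>n$ in Lemma \ref{Yang} refers to the exponent in \eqref{b L^q/2}, so I must separately check that the relevant coefficient $b$ lies in $L^{q/2}$ for some $q>4$, i.e. in $L^s$ for some $s>2$. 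This is where the $L^3$ bound on $G_3$ enters: the coefficient multiplying each quadratic curvature quantity in its evolution inequality (from Corollary \ref{integral} and Lemma \ref{bew}) is a sum of terms of the form $\bar R$, $R-\bar R$, $\|W^\pm\|$, $|E|$, $|F^+_-|$, and each of these is in $L^3 \subset L^{q/2}$ with $q = 6 > 4$ after shrinking to $[\widetilde T/2,\widetilde T]$, with $L^3$-norm controlled by $c(\epsilon_0)\to 0$ together with the uniform bound on $\bar R$ from \eqref{average scalar}.

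Concretely, for $u = |E|$ the evolution formula \eqref{E^2} together with Lemma \ref{bew} gives, after dividing by $2|E|$ where $|E|>0$ (and a standard approximation $|E|_\delta = \sqrt{|E|^2+\delta^2}$ to handle zeros),
\begin{equation}
\frac{\partial}{\partial t}|E| \le \Delta |E| + b_E\,|E|,\qquad b_E := \tfrac{\sqrt 6}{3}\big(\|W^+\|+\|W^-\|\big) + \tfrac13 R + C,
\end{equation}
and similarly \eqref{W+- inequality} gives $\partial_t\|W^-\| \le \Delta\|W^-\| + \sqrt 6\,\|W^-\|\cdot\|W^-\| + \tfrac{\sqrt6}{6}|E|^2 \le \Delta\|W^-\| + b_W\|W^-\|$ with $b_W$ again a first-power curvature expression (absorbing $|E|^2/\|W^-\|$ is the only delicate point, handled by Young's inequality once we know $\|W^-\|$ and $|E|$ are comparably small in $L^3$), and \eqref{F inequality} handles $(F^+)_-$ directly. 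The volume-element inequality $\partial_t dv = -R\,dv \le |R|\,dv$ supplies the function $b$ in Yang's hypothesis $\partial_t dv \le b\,dv$ with $|b| = |R| \in L^3$. Having verified all hypotheses of Lemma \ref{Yang} on the interval $[\widetilde T/2,\widetilde T]$ with $n=4$, $q=6$, $p_0 = 3$, and $\beta = C(1+c(\epsilon_0))$, estimate \eqref{Yang's boundedness} yields for $t\in[3\widetilde T/4,\widetilde T]$
\begin{equation}
|u(t,\cdot)|_\infty \le C e^{Ct} t^{-n/(2p_0)} |u(\widetilde T/2,\cdot)|_{L^3} \le C (\widetilde T)^{-2/3}\big(C\epsilon_0^{3/2}\big)^{1/3} = C'\epsilon_0^{1/2},
\end{equation}
where I used $\widetilde T \ge T_1 > 0$ uniform. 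This is not quite the claimed bound $C\epsilon_0$; to upgrade $\epsilon_0^{1/2}$ to $\epsilon_0$ one iterates once more — feed the $L^\infty$ bound $\|u\|_\infty \le C\epsilon_0^{1/2}$ back together with the $L^3$ bound $\|u\|_{L^3}^3 \le C\epsilon_0^{3/2}$ to interpolate, or more simply run the same argument with $p_0$ replaced by a larger exponent using the higher-integrability bound that \eqref{Yang's evolution} propagates; since $\int u^3\,dv \le C\epsilon_0^{3/2}$ and $u \le C\epsilon_0^{1/2}$ pointwise, one gets $\int u^p \,dv \le C\epsilon_0^{p/2}\cdot\epsilon_0^{(3-p)/2}\cdot$ (nothing) — rather, $\int u^p = \int u^{p-3}u^3 \le (C\epsilon_0^{1/2})^{p-3}\cdot C\epsilon_0^{3/2} = C\epsilon_0^{p/2}$, so $|u(\widetilde T/2)|_{L^p} \le C\epsilon_0^{1/2}$ for all finite $p$; this does not by itself gain the full power $\epsilon_0$, so instead one applies \eqref{Yang's boundedness} starting from an even earlier time slice where $L^2$-smallness $\int u^2 \le \tfrac12\epsilon_0$ holds, but then $p_0 = 2 < n/2$ fails. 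The cleanest route to $C\epsilon_0$ is therefore: run Yang with $p_0 = 3$ to get $\|u\|_\infty \le C\epsilon_0^{1/2}$, then observe $\int G_3\,dv \le \|G_2\|_\infty^{1/2}\int G_2^{?}$ — more precisely $\int |u|^3 \le \|u\|_\infty \int |u|^2 \le C\epsilon_0^{1/2}\cdot\epsilon_0 = C\epsilon_0^{3/2}$ recovers consistency but not improvement, so one simply notes that repeating the estimate with the improved input $|u(\widetilde T/2)|_{L^3} \le C\epsilon_0$, which follows from $\int u^3 \le \|u\|_\infty\int u^2 \le (C\epsilon_0^{1/2})(\tfrac12\epsilon_0)$ — this gives $\epsilon_0^{3/2}$, cube-root $\epsilon_0^{1/2}$ again; the genuine gain to the stated linear power comes from using the $L^2 \to L^\infty$ smoothing with the sharper time-weight and the fact that $G_2(0)$, not $G_2(\widetilde T/2)$, is what is $\epsilon_0$-small, propagated forward via Lemma \ref{E3}-type estimates.

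The main obstacle, then, is bookkeeping the power of $\epsilon_0$: one must be careful to start Yang's iteration from the time slice carrying the best available integral smallness and to track how the $L^p\to L^\infty$ smoothing converts $L^{p_0}$-smallness of order $\epsilon_0^{a}$ into $L^\infty$-smallness of the same order $\epsilon_0^a$ (Yang's estimate is linear in the initial norm), so that feeding in the $L^3$ bound $C\epsilon_0^{3/2}$ of Lemma \ref{L3} — which is $\epsilon_0^{3/2} = o(\epsilon_0)$ only in the $L^3$-norm, i.e. $\|u\|_{L^3}\le C\epsilon_0^{1/2}$ — and then re-examining via Lemma \ref{E3} which gives $\int|E|^3 \le C\epsilon_0^{3/2}t^{-1}$, one sees that the pointwise bound inherited is $C\epsilon_0^{1/2}$, and the statement's ``$C\epsilon_0$'' should be read with $\epsilon_0$ replaced by this — or, what the authors intend, one simply absorbs the discrepancy by noting that for the contradiction argument any bound $c(\epsilon_0)\to 0$ suffices, so redefining $\epsilon_0$ (replacing $\epsilon_0$ by $\epsilon_0^{1/2}$ throughout) makes the statement literally correct. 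The other technical points — the Sobolev constant $C_S$ in \eqref{Sobolev Inequality} being uniform on $[0,\widetilde T]$, which follows from the uniform Yamabe bound \eqref{Yamabe} plus the uniform volume bound of Lemma \ref{volume} via the standard translation between the Yamabe inequality and the $W^{1,2}$ Sobolev inequality; and the Lipschitz regularity needed to differentiate $|E|$, handled by the $\delta$-regularization above — are routine.
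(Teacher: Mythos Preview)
Your overall plan---apply Yang's parabolic Moser estimate on $[\widetilde T/2,\widetilde T]$, using the $L^3$ smallness from Lemma \ref{L3} to control the coefficient $b$---is exactly the paper's approach, and your observation about the power of $\epsilon_0$ is correct: both the paper's argument and yours yield only $|E|+|R-\bar R|+\|W^-\|+|(F^+)_-|\le C\epsilon_0^{1/2}$, which is harmless for the contradiction argument in the proof of Theorem~A.

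There is, however, a real gap in your execution. You apply Lemma \ref{Yang} to each of the four scalars separately, but the inequality \eqref{W+- inequality} for $\|W^-\|$ reads
\[
\partial_t\|W^-\|\le \Delta\|W^-\|+\sqrt6\,\|W^-\|^2+\tfrac{\sqrt6}{6}|E|^2,
\]
and the source term $\tfrac{\sqrt6}{6}|E|^2$ is \emph{not} of the form $b\cdot\|W^-\|$ with $b\in L^3$: where $\|W^-\|$ vanishes (or is much smaller than $|E|$) there is no way to absorb it, and your remark about ``Young's inequality once $\|W^-\|$ and $|E|$ are comparably small in $L^3$'' does not produce the pointwise structure $\partial_t u\le \Delta u+bu$ that Yang's lemma requires. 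The same difficulty arises for $|R-\bar R|$, whose evolution contains an $|E|^2$ source.

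The paper sidesteps this by applying Lemma \ref{Yang} to the single function $u=G_2=|E|^2+|R-\bar R|^2+\|W^-\|^2+|(F^+)_-|^2$ with $p_0=3/2$ and $q=6$. Summing the evolution equations \eqref{E^2}, \eqref{W+-^2}, etc., every cubic curvature term (including the cross-terms like $\|W^-\|\,|E|^2$ coming from $W^-EE$) is bounded by $b\cdot G_2$ with $b$ a first-order curvature quantity; by Lemma \ref{L3} this $b$ lies in $L^3$ uniformly on $[\widetilde T/2,\widetilde T]$. Since $\int G_2^{3/2}\,dv$ is comparable to $\int G_3\,dv\le C\epsilon_0^{3/2}$, one gets $\|G_2\|_{L^{3/2}}\le C\epsilon_0$, and Yang's estimate yields $\|G_2\|_\infty\le C\epsilon_0$, i.e.\ each piece is pointwise $\le C\epsilon_0^{1/2}$. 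Packaging the four quantities into $G_2$ is precisely what makes the source terms disappear.
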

\begin{proof}
Apply Lemma \ref{Yang} to $u=G_2$, $q=6>n=4$ and $p_0=3/2$ on $t\in[\widetilde{T}/2,\widetilde{T}]$. Condition (\ref{b L^q/2}) is satisfied by Lemma \ref{L3}. Hence, we can prove the desired estimate.
\end{proof}
Now recall the Bernstein-Bando-Shi estimate (see for example Chapter 7 of \cite{CK}).
\begin{lemma}\label{Cinfty}
Let $(M^4,g(t))$ be a solution to the Ricci flow. For every $m\in\mathbb{N}$, there exists a constant $C_m$depending only on $m$ such that  if
\[\sup_{x\in{M}}|Rm(x,t)|_{g(t)}\leq{K},\quad t\in\left[0,\frac{1}{K}\right],\]
then
\[\sup_{x\in{M}}|\nabla^mRm(x,t)|_{g(t)}\leq{\frac{C_mK}{t^{m/2}}},\quad t\in\left[0,\frac{1}{K}\right],\]
\end{lemma}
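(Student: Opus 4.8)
The statement is the classical Bernstein--Bando--Shi (local-in-time) derivative estimate for the Ricci flow, so the plan is to reproduce the standard maximum-principle argument and proceed by induction on $m$. The starting point is the evolution equation for the curvature tensor under \eqref{Ricci flow}, which has the schematic form $\partial_t Rm = \Delta Rm + Rm * Rm$; differentiating $k$ times yields $\partial_t \nabla^k Rm = \Delta \nabla^k Rm + \sum_{i+j=k} \nabla^i Rm * \nabla^j Rm$. From these one extracts the pointwise differential inequalities
\[
\partial_t |\nabla^k Rm|^2 \le \Delta |\nabla^k Rm|^2 - 2|\nabla^{k+1}Rm|^2 + c_k \sum_{i+j=k} |\nabla^i Rm|\,|\nabla^j Rm|\,|\nabla^k Rm|,
\]
valid in the barrier sense on $M^4 \times [0,1/K]$, where $K = \sup_M |Rm|$.

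For the base case $m=1$ I would combine the $k=0$ inequality, which using $|Rm|\le K$ reads $\partial_t |Rm|^2 \le \Delta|Rm|^2 - 2|\nabla Rm|^2 + c K^3$, with the $k=1$ inequality $\partial_t|\nabla Rm|^2 \le \Delta|\nabla Rm|^2 - 2|\nabla^2 Rm|^2 + cK|\nabla Rm|^2$, and apply the maximum principle to $F = t\,|\nabla Rm|^2 + A\,|Rm|^2$ for a constant $A$ to be chosen. A short computation gives $\partial_t F \le \Delta F + (1 + cKt - 2A)|\nabla Rm|^2 + cAK^3$; since $Kt \le 1$ on the interval in question, taking $A=A(c)$ large makes the coefficient of $|\nabla Rm|^2$ nonpositive, so $\partial_t F \le \Delta F + cAK^3$. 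The maximum principle then bounds $F(t) \le F(0) + cAK^3 t \le AK^2 + cAK^3\cdot K^{-1} = C K^2$ on $[0,1/K]$, which is precisely $|\nabla Rm| \le C_1 K t^{-1/2}$.

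The inductive step runs the same way with the weights adjusted: assuming $|\nabla^j Rm| \le C_j K\, t^{-j/2}$ for all $j \le m$, I would study $F = t^{m+1}|\nabla^{m+1}Rm|^2 + A\, t^m |\nabla^m Rm|^2$, use the evolution inequalities above for $k=m$ and $k=m+1$, absorb both the $cKt\,|\nabla^{m+1}Rm|^2$ term and the leftover $|\nabla^{m+1}Rm|^2$ term by taking $A$ large, and estimate the reaction terms $\sum_{i+j=m+1} |\nabla^i Rm|\,|\nabla^j Rm|\,|\nabla^{m+1}Rm|$ (all indices $\le m$) using the inductive bounds followed by Young's inequality. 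The one delicate point, and really the only obstacle, is the bookkeeping of powers of $t$: one must check that after multiplication by $t^{m+1}$, every term produced by the reaction part of the equation, once its lower-order factors are replaced by $C_j K\, t^{-j/2}$, carries a power of $t$ that stays bounded (indeed integrable) near $t=0$, so that the maximum principle gives $F(t) \le C K^2$ on $[0,1/K]$ and hence $|\nabla^{m+1}Rm| \le C_{m+1} K\, t^{-(m+1)/2}$. Once the weights are chosen correctly this is routine; the complete details can be found in \cite[Chapter~7]{CK}.
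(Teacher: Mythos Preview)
Your proposal is correct: this is exactly the standard Bernstein--Bando--Shi maximum-principle argument, and it is precisely what the paper has in mind. In fact the paper does not give any proof of this lemma at all---it simply states the result and refers the reader to \cite[Chapter~7]{CK}, which is the same reference you cite at the end of your sketch.
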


Now we are at the position to prove Theorem A.
\begin{proof}[Proof of Theorem A]
We argue by contradiction. Suppose there is a sequence of manifolds $(M_j,g_j)$ satisfying $\beta(M_j,[g_j])<4(1+\epsilon_j)$ with $\epsilon_j\to0$ and each of them is \emph{not} diffeomorphic to standard $\mathbb{CP}^2$.
For each conformal class $[g_j]$, we choose the modified Yamabe metric $(g_j)_{G}$  as initial metric and evolve the metric along Ricci flow. Then Lemma \ref{Linfty} and Lemma \ref{Cinfty} will imply that there is a time $\widetilde{T}$
such that the curvatures of $g_j(\widetilde{T})$ are uniformly bounded in $C^{\infty}$-norm and the Sobolev constants are also uniformly bounded. The convergence theory \cite{CGT} established by Cheeger, Gromov and Taylor then shows that there is a subsequence of
$\{(M_j,g_j(\widetilde{T}))\}$ which converges smoothly to a manifold $(M_\infty,g_\infty)$. As $\epsilon_j\to0$, we obtain that $(M_\infty,g_\infty)$ satisfies
\[\int_{M_\infty}||W||^2dv_\infty=\int_{M_\infty}\sigma_2dv_\infty\]
Note that we also have $b_1(M_\infty)=0$, $b_2^+(M_\infty)=1$ and $b_2^-(M_\infty)=0$. Hence, by Chern-Gauss-Bonnet and signature formula, we can easily derive that $(M_\infty,g_\infty)$ is self-dual Einstein. The same argument in Lemma \ref{CP2} will show that
$(M_\infty,g_\infty)$ is conformal equivalent to $(\mathbb{CP}^2,g_{FS})$.
Since the convergence is smooth, we thereby obtain that $(M_j,g_j)$ must be diffeomorphic to $\mathbb{CP}^2$ with standard differentiable structure when $j$ is sufficiently large. This is clearly a contradiction to our assumption. Hence, we have proved the theorem.
\end{proof}

\vskip.1in

\begin{proof}[Proof of Theorem B]
To prove Theorem B, suppose $M^4$ is oriented with $b_2^{+}(M^4) > 0$.  If $\beta(M^4) = 4$, then by definition we can find a metric $g$ with
\begin{align*}
\beta(M^4,[g]) < 4(1 + \epsilon/2),
\end{align*}
where $\epsilon > 0$ is from Theorem A.  From Theorem A we conclude that $M^4$ is diffeomorphic to $\mathbb{CP}^2$.  In addition, if $g$ is a metric on $\mathbb{CP}^2$ for which
$\beta(M^4,[g]) = 4$, then taking $\epsilon = 0$ in Lemma \ref{L2pinchLemma} we see that $g$ is self-dual.  It follows, for example, from \cite{Poon86} that $(M^4,[g])$ is conformally equivalent to $(\mathbb{CP}^2,g_{FS})$.
\end{proof}

\bibliographystyle{amsplain}

\end{document}